\titlespacing{\section}{0pt}{2ex}{1ex}
\titlespacing{\subsection}{0pt}{1ex}{0ex}
\titlespacing{\subsubsection}{0pt}{0.5ex}{0ex}
\titlespacing{\paragraph}{0pt}{0.5ex}{1.5ex}
\newcounter{egnum}
\declaretheorem[numberwithin=section]{theorem}
\declaretheorem[sibling=theorem]{proposition}
\declaretheorem[sibling=theorem]{corollary}
\declaretheorem[sibling=theorem,style=definition]{remark}
\numberwithin{equation}{section}
\numberwithin{figure}{section}
\numberwithin{table}{section}
\renewenvironment{proof}[1]{\medskip\noindent\textbf{Proof}\ifthenelse{\equal{#1}{}}{}{~\textbf{of~{#1}:}}}{}
\newcommand{\figwid}{150pt}
\newcommand{\fracslash}[2]{{#1}/{#2}}
\newcommand{\sfrac}[2]{\textstyle\frac{#1}{#2}\displaystyle}
\newcommand{\half}{\frac{1}{2}}
\newcommand{\shalf}{\sfrac{1}{2}}
\newcommand{\sthird}{\sfrac{1}{3}}
\newcommand{\squarter}{\sfrac{1}{4}}
\newcommand{\ds}[1]{\displaystyle{#1}}
\newcommand{\dsm}[1]{$\ds{#1}$}
\newcommand{\st}{\,:\,}
\newcommand{\setst}[2]{\setb{\,#1\st#2\,}}
\newcommand{\bmf}[2]{\bm{#1}\!\rb{#2}}
\newcommand{\mymbox}[1]{\quad\mbox{#1}\quad}
\newcommand{\Cr}[2]{\ifthenelse{\equal{#2}{}}{\cC^{#1}}{\cC^{#1}\!\rb{#2}}}
\newcommand{\pderivslash}[2]{\fracslash{\partial#1}{\partial#2}}
\newcommand{\sgn}[1]{\textsf{sgn}\!\rb{#1}}
\newcommand{\abs}[1]{\left|#1\right|}
\renewcommand{\sgn}[1]{\textsf{sgn}\!\rb{#1}}
\newcommand{\norm}[1]{\left\|#1\right\|}
\newcommand{\specrad}[1]{\scf{\rho}{#1}}
\newcommand{\spectrum}[1]{\scf{\sigma}{#1}}
\newcommand{\detb}[1]{\textsf{det}\!\rb{#1}}
\newcommand{\diag}[1]{\textsf{diag}\!\rb{#1}}
\newcommand{\scf}[2]{#1\!\rb{#2}}
\newcommand{\scif}[3]{#1_{#2}\!\rb{#3}}
\newcommand{\sci}[2]{#1_{#2}}
\newcommand{\scm}[2]{#1^{\rb{#2}}}
\newcommand{\scim}[3]{#1_{#2}^{\rb{#3}}}
\newcommand{\scimf}[4]{#1_{#2}^{\rb{#3}}\!\rb{#4}}
\renewcommand{\vec}[1]{\textbf{#1}}
\newcommand{\vecf}[2]{\vec{#1}\!\rb{#2}}
\newcommand{\vecmf}[3]{\vec{#1}^{\rb{#2}}\!\rb{#3}}
\newcommand{\mat}[1]{\textbf{#1}}
\newcommand{\vecxi}{{\bm{\xi}}}
\newcommand{\veczeta}{{\bm{\zeta}}}
\newcommand{\matPhi}{{\bm{\Phi}}}
\newcommand{\matPhif}[1]{\bmf{\Phi}{#1}}
\newcommand{\bmat}[1]{\overline{\mat{#1}}}
\newcommand{\mati}[2]{\mat{#1}_{#2}}
\newcommand{\matm}[2]{\mat{#1}^{\rb{#2}}}
\newcommand{\rb}[1]{\left(#1\right)}
\newcommand{\sqb}[1]{\left[#1\right]}
\newcommand{\setb}[1]{\left\{#1\right\}}
\newcommand{\sqrb}[1]{\left[#1\right)}
\newcommand{\Nd}[1]{\mathbb{N}^{#1}}
\newcommand{\Cm}{\Cd{m}}
\newcommand{\Cmnz}{\Cdnz{m}}
\newcommand{\Cd}[1]{\mathbb{C}^{#1}}
\newcommand{\Cdnz}[1]{{\mathbb{C}}_{\ne 0}^{#1}}
\newcommand{\Rm}{\Rd{m}}
\newcommand{\Rd}[1]{\mathbb{R}^{#1}}
\newcommand{\Rnp}{\Rdp{n}}
\newcommand{\Rdp}[1]{{\mathbb{R}}_{>0}^{#1}}
\newcommand{\Rnnn}{\Rdnn{n}}
\newcommand{\Rmnn}{\Rdnn{m}}
\newcommand{\Rdnn}[1]{{\mathbb{R}}_{\geq 0}^{#1}}
\newcommand{\matCmm}{\matCd{m}{m}}
\newcommand{\matCd}[2]{\Cd{#1 \times #2}}
\newcommand{\matRnn}{\matRd{n}{n}}
\newcommand{\matRmm}{\matRd{m}{m}}
\newcommand{\matRd}[2]{\Rd{#1 \times #2}}
\newcommand{\matRmmnn}{\Rdnn{m \times m}}
\newcommand{\matRnnnn}{\Rdnn{n \times n}}
\newcommand{\matRdnn}[2]{\Rdnn{#1 \times #2}}
\newcommand{\cC}{\mathcal{C}}
\newcommand{\cE}{\mathcal{E}}
\newcommand{\cK}{\mathcal{K}}
\newcommand{\cR}{\mathcal{R}}
\newcommand{\cX}{\mathcal{X}}
\newcommand{\cY}{\mathcal{Y}}
\newcommand{\hcR}{\widehat{\cR}}
\newcommand{\bB}{\overline{B}}
\newcommand{\bD}{\overline{D}}
\newcommand{\bN}{\overline{N}}
\newcommand{\bW}{\overline{W}}
\newcommand{\bX}{\overline{X}}
\newcommand{\twobytwomatrix}[4]{\begin{pmatrix}#1&#2\\#3&#4\end{pmatrix}}
\newcommand{\threebythreematrix}[9]{\begin{pmatrix}#1&#2&#3\\#4&#5&#6\\#7&#8&#9\end{pmatrix}}
\renewcommand{\lim}{\operatornamewithlimits{\textsf{lim}}}
\renewcommand{\max}[2]{\underset{#1}{\textsf{max}}\setb{#2}}
\newcommand{\maxst}[2]{\textsf{max}\setb{\,#1\st#2\,}}
\renewcommand{\min}[2]{\underset{#1}{\textsf{min}}\setb{#2}}
\newcommand{\minst}[2]{\textsf{min}\setb{\,#1\st#2\,}}
\begin{document}

\pagestyle{fancy}

\renewcommand{\thefootnote}{\fnsymbol{footnote}}

\title{\textbf{Amplification of the Net Reproductive Number by Dispersion for a Matrix Population Model Applicable to the Invasive Round Goby Fish}}

\author[1*]{Matt S. Calder}
\author[2]{Yingming Zhao}
\author[3]{Xingfu Zou}

\affil[1]{Department of Applied Mathematics, University of Western Ontario, \newline 1151 Richmond Street, London, Ontario, Canada, N6A 5B7}
\affil[2]{Aquatic Research and Development Section, Ontario Ministry of Natural Resources, \newline 320 Milo Road, Wheatley, Ontario, Canada, N0P 2P0}
\affil[3]{Department of Applied Mathematics, University of Western Ontario, \newline 1151 Richmond Street, London, Ontario, Canada, N6A 5B7}

\footnotetext[1]{Corresponding author}

\renewcommand{\thefootnote}{\arabic{footnote}}

\maketitle

\begin{abstract}
Matrix population models have proved popular and useful for studying stage-structured populations in quantitative ecology. The goal of this paper is to develop and analyze a general matrix population model that is applicable to the population dynamics of the invasive round goby fish that incorporates both stage structure---larvae, juveniles, and adults---and dispersion. Specifically, we address the issue of whether or not dispersion can amplify the net reproductive number. To this end, we will first review the mathematics of matrix population models with dispersion, particularly those with an Usher demography matrix. Techniques for computing the net reproductive number, like the graph reduction method of de-Camino-Beck and Lewis, will be discussed. A common theme will be the usefulness of submatrices of relevant matrices obtained by the expunging of rows and columns corresponding to non-newborns. Finally, examples will be provided, including the calculation of the net reproductive number for multiple regions using the graph reduction method of de-Camino-Beck and Lewis, examples where dispersion results in a total population flourishing when the populations would otherwise go extinct, and an application to the invasive round goby fish.
\end{abstract}

\paragraph{Key words.} matrix population models, dispersion, patches, net reproductive number, round goby

\paragraph{AMS subject classifications.} 37N25, 39A06, 92D25

\section{Introduction} \label{sec01}

To model the dynamics of a population with distinct stages, within which the vital rates vary little, that influence each other and the times of interest are periodic, such as months or years, the tool of choice is a matrix population model. For models ignoring spatial variation, the other options include ordinary differential equations (continuous time, discrete stages), partial differential equations (continuous time and stages), and integro-difference equations models (discrete time, continuous stages).

\paragraph{Brief history.}

Concepts from what is now referred to as matrix population analysis date back to the end of the Nineteenth Century. However, it was the work of Leslie \cite{Leslie1945,Leslie1948} in the middle of the 1940s that brought the techniques to prominence. Leslie studied age-structured animal populations, but Goodman \cite{Goodman1967} and Keyfitz \cite{Keyfitz1967} applied matrix population analysis to human demographics and demonstrated their equivalence to integro-difference equation models. The famous McKendrick-von Foerster partial differential equation can be viewed as the continuous-time and continuous-stage version of the Leslie population model. See, for example, \S8.1 of \cite{Caswell2001}. Usher \cite{Usher1966,Usher1969a,Usher1969b} generalized the Leslie model, with forestry being the original application, and allowed for more general stage-structured populations.

\paragraph{Standard references.}

The standard reference for matrix population models is Caswell's \cite{Caswell2001}. To quote this oft-quoted book (specifically, Page~xviii of the Preface), ``Matrix population models---carefully constructed, correctly analyzed, and properly interpreted---provide a theoretical basis for population management.'' Further, ``The methods involved may appear daunting ... but population managers deserve the sharpest analytical tools available. Their work is too important to settle for less.''

Standard references for the linear algebra content include \cite{Bellman1970,Gantmacher1959,HornJohnson2013}. The Cushing-Zhou Theorem, which relates the growth rate of a population to the average reproductive output of an individual, can be found in \cite{Cushing2011,CushingZhou1994}. More information and generalizations can be found in \cite{LiSchneider2002}. The graph reduction method which we will use, due to de-Camino-Beck and Lewis, can be found in \cite{de-Camino-BeckLewis2007}. For a by-no-means-exhaustive collection of references using matrix models and/or net reproductive numbers, many of which involve dispersion, we suggest \cite{AllenvandenDriessche2008,CaoZhou2012,Caswell1982a,Caswell1982b,Cushing1998,Cushing2009,Cushing2011,KrkosekLewis2010,LevinGoodyear1980,Velez-EspinoKoopsBalshine2010} and references therein.

\paragraph{Scalar model.}

The simplest (single-stage) linear, discrete-time population model is ${\scf{x}{t+1}=r\,\scf{x}{t}}$, with initial condition ${\scf{x}{0}=x_0}$, where $\scf{x}{t}$ is the number of individuals (usually restricted to females) of a given species at discrete time ${t\in\Nd{}_0}$, with ${\Nd{}_0:=\setb{0,1,2,\ldots}}$, and ${r,x_0 \geq 0}$. Trivially, the solution is ${\scf{x}{t}=r^t\,x_0}$ for ${t\in\Nd{}_0}$, and so $r$ is immediately recognizable as the \emph{growth rate}. If ${r<1}$, then the population decays; if ${r>1}$, then the population grows. Commonly, we write ${r:=f+s}$, where ${f \geq 0}$ quantifies \emph{fecundity} (average reproductive output) and ${s\in\sqrb{0,1}}$ quantifies the \emph{survival probability}. Since ${f\,s^t}$ is the expected reproductive output from a single newborn individual between times $t$ and ${t+1}$ (with time ${t=0}$ corresponding to birth), the total expected reproductive output for the individual over his or her entire lifetime is given by the \emph{net reproductive number} ${\cR_0 := \sum_{t=0}^\infty f\,s^t = f\rb{1-s}^{-1}}$. Intuitively, $r$ and $\cR_0$ are always on the same side of $1$.

\paragraph{Matrix population model and stage structure.}

To the simple model ${\scf{x}{t+1}=r\,\scf{x}{t}}$ we can add \emph{stage structure}. Stage can refer to age (for example, individuals between one and two years old), length (for example, individuals between 5~cm and 10~cm in length), or life stage (for example, newborn, juvenile, or adult). Suppose that $\scif{x}{k}{t}$ is the number of individuals of stage $k$ at time $t$, where ${k\in\cE_m}$ and ${m\in\Nd{}}$ with ${\cE_m:=\setb{1,\ldots,m}}$ and ${\Nd{}:=\setb{1,2,\ldots}}$, is the number of stages. Then, we can form the \emph{population vector} ${\vecf{x}{t}:=\rb{\scif{x}{1}{t},\ldots,\scif{x}{m}{t}}\in\Rm}$, which is a column vector (and not a row vector, as indicated by the commas). Omitting specifics which appear later in \S\ref{sec02}, each $\scif{x}{k}{t+1}$ can be modeled as depending linearly on the populations of all stages at time $t$ with the coefficients given by appropriate fecundities and survival probabilities. That is, we can work with a model of the form ${\vecf{x}{t+1}=\mat{A}\,\vecf{x}{t}}$, with initial condition ${\vecf{x}{0}=\vec{x}_0}$, where ${\vec{x}_0\in\Rmnn}$, ${\mat{A}:=\mat{F}+\mat{S}}$, ${\mat{F},\mat{S}\in\matRmmnn}$, ${\norm{\mat{S}}<1}$, and ${t\in\Nd{}_0}$. (Since ${\scif{x}{k}{t+1}=\sum_{\ell=1}^mA_{k\ell}\,\scif{x}{\ell}{t}}$, the element $A_{k\ell}$ is interpreted as the contribution that the population of stage $\ell$ contributes to the population of stage $k$ from one time increment to the next.) Determining the proper values of the vital rates (survival probabilities, fecundities) is often called the \emph{calibration problem}, and can be expressed as a nonlinear maximization problem with linear constraints \cite{Logofet2013}. Note that $\Rmnn$ and $\matRmmnn$ respectively denote the set of all nonnegative $m$-dimensional and ${m \times m}$ matrices. Further, we are using the $L_1$-norm. See \S\ref{appA.01}, which contains a terse collection of notation, terminology, and standard results that we employ in this paper.

\paragraph{Dispersion.}

A natural extension of the model ${\vecf{x}{t+1}=\mat{A}\,\vecf{x}{t}}$ is to incorporate dispersion between a finite number of patches. These patches may be distinguished or characterized, for example, by different basins or regions where populations congregate or even political jurisdictions. If there are ${n\in\Nd{}}$ patches, we can regard ${\vecf{x}{t}\in\Rdnn{mn}}$ as the population vector with the component $\scif{x}{k+\rb{i-1}m}{t}$ being the number of individuals of demographic stage $k$ in patch $i$ at time $t$. The evolution of the population can be modeled as ${\vecf{x}{t+1}=\mat{P}\,\vecf{x}{t}}$, with initial condition ${\vecf{x}{0}=\vec{x}_0}$, where ${\vec{x}_0\in\Rdnn{mn}}$, ${\mat{P}:=\mat{D}\,\mat{A}}$, ${\mat{D},\mat{A}\in\matRdnn{mn}{mn}}$, and ${t\in\Nd{}_0}$. Here, multiplication by $\mat{A}$ performs demographic changes within each patch and multiplication by $\mat{D}$ performs dispersion between the patches. The growth rate and net reproductive number in this context are taken to be ${r:=\specrad{\mat{P}}}$ and ${\cR_0:=\specrad{\mat{N}}}$, where $\mat{N}$ is the \emph{next-generation matrix} that will be derived later and $\specrad{\cdot}$ is the spectral radius function.

\paragraph{Evolution of dispersal.}

It is not true, in general, that ${\specrad{\mat{P}}\leq\specrad{\mat{A}}}$ \cite{AxtellHanHershkowitzNeumannSze2009,JohnsonBru1990}. However, it may be the case that ${\specrad{\mat{P}}>\specrad{\mat{A}}}$ for invasive species. If so, perhaps one strategy for control is to influence the parameters so that ${\specrad{\mat{P}}\leq\specrad{\mat{A}}}$. After all, if dispersion is no longer beneficial for the growth of a population, then the population may reduce or cease the dispersion. This is related to the evolution of dispersal \cite{Greenwood-LeeTaylor2001}.

\paragraph{Goal.}

The general goal of this paper is to develop and analyze a general matrix population model that is applicable to the population dynamics of the invasive round goby fish that incorporates stage structure and dispersion. Particularly, we investigate whether dispersion can amplify the net reproductive number. More specifically, if $\cR_0'$ and $\cR_0''$ are the net reproductive numbers for the global system respectively with and without dispersion, can we have ${\cR_0'<1<\cR_0''}$ (dispersion prevents extinction)?

\paragraph{Discussion of results.}

In \S\ref{sec02}, we review standard material on matrix population models with dispersion (see, for example, \cite{Caswell2001}), including the concepts of the growth rate $r$, next-generation matrix $\mat{N}$, and net reproductive number $\cR_0$. Local demography matrices are Usher, for simplicity. For our analysis, we introduce certain submatrices, which we call newborn submatrices, whereby rows and columns of matrices corresponding to non-newborns are removed (see \S\ref{sec02}, in particular \eqref{eq02.15}, for details). Properties are developed and a second net reproductive number $\hcR_0$ is given which quantifies the maximum reproductive output of an average newborn individual. Write ${\mat{N}=\mat{D}\,\mat{W}}$, where $\mat{D}$ is the global dispersion matrix and we refer to $\mat{W}$ as the partial next-generation matrix. We show that ${\bmat{N}=\bmat{D}\cdot\bmat{W}}$, ${\specrad{\bmat{N}}=\specrad{\mat{N}}}$, and ${\specrad{\bmat{W}}=\specrad{\mat{W}}}$, where we denote by $\bmat{B}$ the newborn submatrix of matrix $\mat{B}$. Furthermore, we establish ${\cR_0\leq\hcR_0}$ and show that $\cR_0$ corresponds to the reproductive output of an individual given by the distribution characterized by the Perron eigenvector of $\bmat{N}$. In \S\ref{sec03}, detailed examples are provided. Notably, graph reduction is applied to two-patch and three-patch models with only one stage dispersing and, consequently, the net reproductive is between the smallest and largest local dispersion-free net reproductive numbers. Moreover, a two-patch example is given in which the individual populations would go extinct in isolation yet dispersion enables the total population to grow. In \S\ref{sec04}, we apply the preceding material to the invasive round goby fish. In \S\ref{sec05}, concluding remarks and open problems are stated. Finally, material that would disrupt the flow of the main narrative are located in the appendices. Standard notation, terminology, and results we use is briefly described in \S\ref{appA.01} and a more detailed review of graph reduction is presented in \S\ref{appA.02}. Unless otherwise stated, proofs are either in \S\ref{appB} or are omitted for brevity.

\section{Matrix Population Model with Dispersion} \label{sec02}

In the mathematics of population, the Leslie matrix is the fundamental model for a stage-structured population in which time is discrete. As time increments, individuals must proceed from one stage---usually interpreted as age---to the next. The Usher matrix generalizes the Leslie matrix by allowing individuals to remain within a stage during multiple time increments. In this section, we will review the Leslie and Usher matrices, incorporate dispersion, and derive the next-generation matrix and net reproductive number. Furthermore, we present results that we will use to determine if specific models allow for the amplification of the net reproductive number with the presence of dispersion.


\paragraph{Stages, patches, and population.}

Suppose a population is divided into $m$ stages, where ${m\in\setb{2,3,\ldots}}$ is fixed, and spread over $n$ patches, where ${n\in\Nd{}}$ is also fixed. The stages could be, for example, larvae, juveniles, and adults. Let $\scimf{x}{k}{i}{t}$ be the number of (female) individuals in stage $k$ in region $i$ at time $t$ (the number of males would be roughly proportional), where ${k\in\cE_m}$, ${i\in\cE_n}$, and ${t\in\Nd{}_0}$. Here, we regard time $t$ as discrete with one time increment as the minimum length of one stage (for example, one calendar year when the stages are taken to be the number of years since birth). To be biologically realistic, we assume ${\scimf{x}{k}{i}{0} \geq 0}$ for each stage $k$. We will refer to the individuals in stage ${k=1}$ as \emph{newborns} and the individuals in stage ${k>1}$ as \emph{non-newborns}. For the \emph{global population vector}, we define ${\vecf{x}{t}\in\Rdnn{mn}}$ by ${\scif{x}{k+\rb{i-1}m}{t}:=\scimf{x}{k}{i}{t}}$ for ${k\in\cE_m}$ and ${i\in\cE_n}$. Note that the \emph{local population vector} for patch ${i\in\cE_n}$ is ${\vecmf{x}{i}{t}=\rb{\scimf{x}{1}{i}{t},\ldots,\scimf{x}{m}{i}{t}}\in\Rmnn}$.

\paragraph{Fecundity and survival probability.}

Let ${\scim{f}{k}{i} \geq 0}$, where ${k\in\cE_m}$ and ${i\in\cE_n}$, be the average number of offspring born to an individual of stage $k$ in patch $i$ in a given time increment which survive to the next time increment (the \emph{fecundity}). Moreover, let ${\scim{s}{k,k}{i}\in\sqb{0,1}}$ be the probability that an individual in stage $k$ and patch $i$ will survive to the next time increment and remain in stage $k$ (the \emph{survival probability}). Similarly, let ${\scim{s}{k+1,k}{i}\in\sqb{0,1}}$ be the probability that an individual in stage $k$ and patch $i$ will survive to advance to stage ${k+1}$ for the next time increment. Since there is no stage ${m+1}$, we take ${\scim{s}{m+1,m}{i}:=0}$. We need to assume
\begin{equation} \label{eq02.01}
    0 \leq \scim{s}{k,k}{i} + \scim{s}{k+1,k}{i} < 1
    \mymbox{for all}
    k \in \cE_m
    \mymbox{and}
    i \in \cE_n
\end{equation}
to ensure that not all individuals survive until the next time increment.

\paragraph{Dispersion.}

For any stage ${k\in\cE_m}$ and pair of regions ${\rb{i,j}\in\cE_n\times\cE_n}$, we let ${\scim{d}{k}{i,j}\in\sqb{0,1}}$ be the probability that an individual of stage $k$ will disperse from region $j$ to region $i$ during a given time increment (the \emph{dispersion probability}). We assume that
\begin{equation} \label{eq02.02}
    \sum_{i=1}^n \scim{d}{k}{i,j} = 1
    \mymbox{for all}
    k \in \cE_m
    \mymbox{and}
    j \in \cE_n,
\end{equation}
meaning that any individual will either disperse to exactly one new region or remain in the same region during a given time increment.

\begin{remark}
A decision must be made regarding the order of demography and dispersion. Here, for a given time increment, we have demography (reproduction and survival) occur first within each region and then dispersion between regions occurs second.
\end{remark}

\begin{remark}
If we instead had the condition ${\sum_{i=1}^n\scim{d}{k}{i,j} \leq 1}$, then we would be allowing the possibility of death or removal during the dispersion process. We choose ${\sum_{i=1}^n\scim{d}{k}{i,j}=1}$ for mathematical convenience, but it is fairly realistic assumption when the regions are close together and/or the dispersion process is quick compared to other processes.
\end{remark}

\paragraph{Local Usher and Leslie matrices.}

For the moment, ignore dispersion and focus only on a single region, say fixed ${i\in\cE_n}$. The local demography can be modeled as ${\vecmf{x}{i}{t+1}=\matm{A}{i}\,\vecmf{x}{i}{t}}$. Since $\scim{A}{k\ell}{i}$ is the proportion that the population $\scimf{x}{\ell}{i}{t}$ contributes to the population $\scimf{x}{k}{i}{t+1}$, we have for example ${\scim{A}{11}{i}=\scim{s}{1,1}{i}+\scim{f}{1}{i}}$. We are then presented with the famous \emph{Usher matrix} (or \emph{local demography matrix})
\begin{equation} \label{eq02.03}
    \matm{A}{i} :=
    \underbrace{ \begin{pmatrix}
        \scim{f}{1}{i} & \scim{f}{2}{i} & \scim{f}{3}{i} & \cdots & \scim{f}{m-1}{i} & \scim{f}{m}{i} \\
        0 & 0 & 0 & \cdots & 0 & 0 \\
        0 & 0 & 0 & \cdots & 0 & 0 \\
        \vdots & \vdots & \vdots & \ddots & \vdots & \vdots \\
        0 & 0 & 0 & \cdots & 0 & 0 \\
        0 & 0 & 0 & \cdots & 0 & 0
    \end{pmatrix} }_{ \matm{F}{i} \in \matRmmnn }
    +
    \underbrace{ \begin{pmatrix}
        \scim{s}{1,1}{i} & 0 & 0 & \cdots & 0 & 0 \\
        \scim{s}{2,1}{i} & \scim{s}{2,2}{i} & 0 & \cdots & 0 & 0 \\
        0 & \scim{s}{3,2}{i} & \scim{s}{3,3}{i} & \cdots & 0 & 0 \\
        \vdots & \vdots & \vdots & \ddots & \vdots & \vdots \\
        0 & 0 & 0 & \cdots & \scim{s}{m-1,m-1}{i} & 0 \\
        0 & 0 & 0 & \cdots & \scim{s}{m,m-1}{i} & \scim{s}{m,m}{i}
    \end{pmatrix} }_{ \matm{S}{i} \in \matRmmnn },
\end{equation}
with $\matm{A}{i}$ decomposed into a \emph{local fecundity matrix} $\matm{F}{i}$ and a \emph{local survival matrix} $\matm{S}{i}$. When ${\scim{s}{k,k}{i}=0}$ for every ${k\in\cE_m}$, that is all individuals must advance to the subsequent stage during a time increment, then $\matm{A}{i}$ is the \emph{Leslie matrix}. Since there would be no concern for confusion, we just write ${\scim{s}{k}{i}=\scim{s}{k+1,k}{i}}$ for ${k\in\cE_{m-1}}$. Due to the form of $\matm{S}{i}$, we can rephrase the assumption \eqref{eq02.01} using \eqref{eqA.01} as ${\norm{\matm{S}{i}}<1}$.

\paragraph{Local dispersion matrices.}

To encode the dispersion between pairs of regions, we will utilize the \emph{local dispersion matrices}
\begin{equation} \label{eq02.04}
    \matm{D}{i,j} :=
    \bigoplus_{k=1}^m \scim{d}{k}{i,j}
    \in \matRmmnn
    \mymbox{for}
    i, j \in \cE_n.
\end{equation}
(Equivalently, ${\matm{D}{i,j}=\diag{\scim{d}{1}{i,j},\ldots,\scim{d}{m}{i,j}}}$ with the terms ${\setb{\scim{d}{k}{i,j}}_{k=1}^m}$ on the diagonal with all other elements being zero.) Then, the local population vector $\vecmf{x}{i}{t+1}$ is the result of dispersion to region $i$ after local demography has taken place in each of the regions. That is, ${\vecmf{x}{i}{t+1}=\sum_{j=1}^n\matm{D}{i,j}\,\matm{A}{j}\,\vecmf{x}{j}{t}}$. See Figure~\ref{fig02.01}. Note that \eqref{eq02.02} translates to ${\sum_{i=1}^n\matm{D}{i,j}=\mat{I}}$ for each ${j\in\cE_n}$.

\paragraph{Global matrices.}

We will also make use of the \emph{global fecundity matrix}, the \emph{global survival matrix}, the \emph{global Usher matrix} (or \emph{global demography matrix}), and the \emph{global dispersion matrix}, respectively given by
\begin{equation} \label{eq02.05}
    \mat{F} := \bigoplus_{i=1}^n \matm{F}{i},
    \quad
    \mat{S} := \bigoplus_{i=1}^n \matm{S}{i},
    \quad
    \mat{A} := \mat{F} + \mat{S},
    \mymbox{and}
    \mat{D} :=
    \rb{ \matm{D}{i,j} }_{i,j=1}^n,
\end{equation}
with all four matrices being in $\matRdnn{mn}{mn}$ and the first three being block-diagonal. Note that, in \eqref{eq02.05}, $\matm{D}{i,j}$ is the $\rb{i,j}^\text{th}$ block of $\mat{D}$. Proposition~\ref{thm02.04} below summarizes properties of $\mat{S}$ and $\mat{D}$, which are typical for survival and dispersion matrices.

\begin{remark}
Commonly, we will omit the prefaces ``local'' and ``global'' for brevity when there is no possibility of confusion.
\end{remark}

\begin{proposition} \label{thm02.04}
Consider the survival $\mat{S}$ and dispersion $\mat{D}$ matrices, defined in \eqref{eq02.05}.
\begin{enumerate}[label={(\alph*)},ref={\thetheorem(\alph*)},leftmargin=*,widest=a]
    \item\label{thm02.04a}
        The survival matrix satisfies ${\norm{\mat{S}}<1}$.
    \item\label{thm02.04b}
        The dispersion matrix is column stochastic, that is ${\sum_{p=1}^{mn}D_{pq}=1}$ for every ${q\in\cE_{mn}}$. Consequently, ${\norm{\mat{D}\,\vec{x}}=\norm{\vec{x}}}$ and ${\norm{\mat{D}\,\mat{X}}=\norm{\mat{X}}}$ for every ${\vec{x}\in\Rdnn{mn}}$ and ${\mat{X}\in\matRdnn{mn}{mn}}$. Moreover, ${\specrad{\mat{D}}=\norm{\mat{D}}=1}$.
\end{enumerate}
\end{proposition}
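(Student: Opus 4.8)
The plan is to reduce both parts to the elementary characterization of the $L_1$ matrix norm as the maximum column sum (recorded in \eqref{eqA.01}), combined with the block structures of $\mat{S}$ and $\mat{D}$ from \eqref{eq02.03}--\eqref{eq02.05}. Throughout I will repeatedly use that every matrix in sight, as well as the vector $\vec{x}$ and matrix $\mat{X}$ appearing in the statement, is nonnegative, so that all $L_1$-norms are honest sums with no absolute values.

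For part~\ref{thm02.04a}: since $\mat{S}=\bigoplus_{i=1}^n\matm{S}{i}$ is block-diagonal, every column of $\mat{S}$ is a column of some block $\matm{S}{i}$ padded with zeros, so the column sums of $\mat{S}$ are exactly the column sums of the $\matm{S}{i}$, and hence $\norm{\mat{S}}=\max{i\in\cE_n}{\norm{\matm{S}{i}}}$. It then suffices to show $\norm{\matm{S}{i}}<1$ for each fixed $i$ --- which is precisely the reformulation of \eqref{eq02.01} already noted after \eqref{eq02.03}. Concretely, reading off the bidiagonal matrix $\matm{S}{i}$ in \eqref{eq02.03}, column $k<m$ has column sum $\scim{s}{k,k}{i}+\scim{s}{k+1,k}{i}$ and column $m$ has column sum $\scim{s}{m,m}{i}=\scim{s}{m,m}{i}+\scim{s}{m+1,m}{i}$, and each of these is strictly less than $1$ by \eqref{eq02.01}. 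Since $\cE_n$ is finite, the maximum over $i$ is also $<1$.

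For part~\ref{thm02.04b}: fix $q\in\cE_{mn}$ and write $q=k+(j-1)m$ with $k\in\cE_m$ and $j\in\cE_n$. In the block form $\mat{D}=(\matm{D}{i,j})_{i,j=1}^n$, column $q$ runs through the blocks $\matm{D}{i,j}$, $i\in\cE_n$, and since each $\matm{D}{i,j}=\diag{\scim{d}{1}{i,j},\ldots,\scim{d}{m}{i,j}}$ is diagonal, the only nonzero entry contributed by block $(i,j)$ to column $q$ is $\scim{d}{k}{i,j}$. Summing over block rows, $\sum_{p=1}^{mn}D_{pq}=\sum_{i=1}^n\scim{d}{k}{i,j}=1$ by \eqref{eq02.02}, so $\mat{D}$ is column stochastic. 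The norm identities follow immediately: for $\vec{x}\in\Rdnn{mn}$ we get $\norm{\mat{D}\,\vec{x}}=\sum_{p}\sum_{q}D_{pq}x_q=\sum_{q}x_q\sum_{p}D_{pq}=\sum_{q}x_q=\norm{\vec{x}}$, and applying this column-by-column to a nonnegative $\mat{X}$ and maximizing over columns yields $\norm{\mat{D}\,\mat{X}}=\norm{\mat{X}}$. Finally $\norm{\mat{D}}=1$ since all column sums equal $1$; and $\specrad{\mat{D}}\le\norm{\mat{D}}=1$ always holds, while the reverse follows because products of column-stochastic matrices are again column stochastic, so $\norm{\mat{D}^t}=1$ for all $t$ and Gelfand's formula gives $\specrad{\mat{D}}=\lim_{t\to\infty}\norm{\mat{D}^t}^{1/t}=1$. (Equivalently, $\mat{D}^{\top}\vec{1}=\vec{1}$ exhibits $1$ as an eigenvalue of $\mat{D}^{\top}$, hence of $\mat{D}$.)

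There is no real obstacle here --- the statement is standard. The only points requiring a little care are the index bookkeeping in the two-index-to-one-index convention $q=k+(j-1)m$ when identifying the nonzero entries of a column of $\mat{D}$, and making sure the nonnegativity of $\vec{x}$, $\mat{X}$, and the matrix entries is invoked so that $\norm{\,\cdot\,}$ can be pulled through the summations without absolute values. If one prefers to avoid Gelfand's formula, the eigenvector identity $\mat{D}^{\top}\vec{1}=\vec{1}$ together with $\specrad{\mat{D}}=\specrad{\mat{D}^{\top}}\le\norm{\mat{D}}=1$ is the shortest route to $\specrad{\mat{D}}=1$.
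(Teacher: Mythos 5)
Your proof is correct and takes essentially the same route as the paper: part (a) reduces $\norm{\mat{S}}$ to the column sums of the local survival matrices via \eqref{eqA.01}, \eqref{eq02.01}, and \eqref{eq02.03}, and part (b) establishes column stochasticity from \eqref{eq02.02}, \eqref{eq02.04}, and \eqref{eq02.05}, derives the norm identities by the same routine computation, and obtains $\specrad{\mat{D}}=1$ from the all-ones left eigenvector exactly as the paper does. The Gelfand-formula alternative you offer is a harmless variant, but your parenthetical eigenvector argument is the paper's own.
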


\begin{remark}
The equalities ${\norm{\mat{D}\,\vec{x}}=\norm{\vec{x}}}$ and ${\norm{\mat{D}\,\mat{X}}=\norm{\mat{X}}}$ in Proposition~\ref{thm02.04} are intuitive since left-multiplication by $\mat{D}$ can be regarded as a rearrangement of a given population with no loss of individuals.
\end{remark}

\begin{figure}[t]
    \begin{center}
        \includegraphics[scale=0.75]{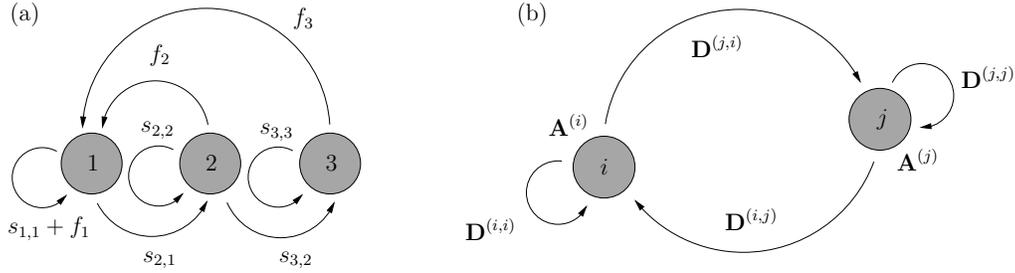}
        \caption{\textbf{(a)} Life-cycle graph for a model with ${m=3}$ stages and only one patch. (Superscripts for the patch are omitted when only one patch is under consideration.) The arrows indicate the ways in which individuals can be produced for or transition between stages. \textbf{(b)} For two regions $i$ and $j$, each with its own local demography matrix, respectively $\matm{A}{i}$ and $\matm{A}{j}$, individuals are allowed to disperse from one region to the other or remain in the same region. Here, $\matm{D}{i,j}$ is the local dispersion matrix that accounts for dispersion from $j$ to $i$.}
        \label{fig02.01}
    \end{center}
\end{figure}

\paragraph{Global model.}

Collectively, the global population is modeled by
\begin{equation} \label{eq02.06}
    \vecf{x}{t+1} = \mat{P} \, \vecf{x}{t},
    \quad
    \vecf{x}{0} = \vec{x}_0,
    \quad
    t \in \Nd{}_0,
\end{equation}
where ${\vec{x}_0\in\Rdnn{mn}}$ and ${\mat{P}:=\mat{D}\,\mat{A}=\mat{D}\,\mat{F}+\mat{D}\,\mat{S}}$ is the \emph{global projection matrix}. The component $\scif{x}{k+\rb{i-1}m}{t}$ is the number of individuals of stage ${k\in\cE_m}$ in patch ${i\in\cE_n}$ at time $t$.


\paragraph{Global growth rate.}

The solution of \eqref{eq02.06} is given explicitly by ${\vecf{x}{t}=\mat{P}^t\,\vec{x}_0}$ for ${t\in\Nd{}_0}$. (Since ${\vec{x}_0\geq\vec{0}}$ and ${\mat{P}\geq\mat{0}}$, we obtain the biologically-necessary ${\vecf{x}{t}\geq\vec{0}}$ for every ${t\in\Nd{}_0}$.) It is natural to wonder if the population will grow without bound or go extinct as time gets arbitrarily large. We will take the \emph{global growth rate} of the population governed by \eqref{eq02.06} to be ${r:=\specrad{\mat{P}}}$. If ${r<1}$, then \dsm{\lim_{t\to\infty}\norm{\vecf{x}{t}}=0} and the population will go extinct. Conversely, if ${r>1}$, then the population will \emph{usually} grow without bound.

If $\mat{A}$ is irreducible, then it follows from the Perron-Frobenius Theorem that ${r>0}$ is an eigenvalue having associated positive left and right eigenvectors, say $\vec{u}$ and $\vec{v}$, respectively. If $\vec{u}$ and $\vec{v}$ are chosen so that ${\vec{u}^*\,\vec{v}=1}$, then the \emph{sensitivity} of $r$ with respect to the parameter $A_{k\ell}$ can be computed using the formula ${\pderivslash{r}{A_{k\ell}}=u_k\,v_\ell}$ for ${k,\ell\in\cE_m}$. See, for example, \cite{CaswellWerner1978}.

\paragraph{Global net reproductive number.}

For the population described by \eqref{eq02.06}, given explicitly by ${\vecf{x}{t}=\mat{P}^t\,\vec{x}_0}$ for ${t\in\Nd{}_0}$, one may be interested in knowing the expected total number of offspring born to an average individual over their lifetime. This crucial quantity is known as the net reproductive number and is usually denoted by $\cR_0$. If ${\cR_0<1}$, we expect the population will always decay to zero. Similarly, if ${\cR_0>1}$, we expect the population will usually grow without bound. To compute $\cR_0$, we need to consider the distribution of newborns and to construct the next-generation matrix.

\paragraph{Distribution of newborns.}

Consider an average newborn individual. Now, this individual can be located in any of the $n$ regions. With ${x_{1+\rb{i-1}m}\in\sqb{0,1}}$, ${\sum_{i=1}^nx_{1+\rb{i-1}m}=1}$, and ${x_{k+\rb{i-1}m}=0}$ for ${k\in\cE_m\backslash\setb{1}}$, we can interpret $\vec{x}$ as the initial population vector for the average newborn and $x_{1+\rb{i-1}m}$ as the probability that the individual is initially located in region $i$. This leads us to consider the two sets
\begin{equation} \label{eq02.07}
    \cX := \setst{ \vec{x} \in \Rdnn{mn} }{ \norm{\vec{x}} = 1 \mymbox{and} x_p = 0 \mymbox{for all} p \not\in \cK }
    \mymbox{and}
    \cY := \setst{ \vec{x} \in \Rnnn }{ \norm{ \vec{y} } = 1 },
\end{equation}
where the set of indices
\begin{equation} \label{eq02.08}
    \cK := \setb{ 1 + \rb{ i - 1 } m }_{i=1}^n \subset \cE_{mn}
\end{equation}
corresponds to newborns. That is, $\cX$ is the set of all possible initial populations vectors of newborns and $\cY$ is the result of dropping the components corresponding to non-newborns.

\paragraph{Global next-generation matrix.}

To determine the reproductive output of a newborn ${\vec{x}\in\cX}$, we observe that the expected population vector for the offspring born to the individual between times $t$ and ${t+1}$ is given by ${\rb{\mat{D}\,\mat{F}}\rb{\mat{D}\,\mat{S}}^t\,\vec{x}}$, since the individual must first survive then disperse for each of the $t$ time increments followed by reproduction then dispersion for the last time increment. It follows that the expected population vector for the offspring born to the individual over their lifetime is given by ${\sum_{t=0}^\infty\rb{\mat{D}\,\mat{F}}\rb{\mat{D}\,\mat{S}}^t\,\vec{x}=\mat{D}\,\mat{F}\rb{\mat{I}-\mat{D}\,\mat{S}}^{-1}\vec{x}}$, where we used the geometric series (also referred to as the von Neumann series) in addition to \eqref{eqA.02} and Proposition~\ref{thm02.04a} (which guarantees ${\specrad{\mat{D}\,\mat{S}}<1}$ and hence the convergence of the series). The matrix
\begin{equation} \label{eq02.09}
    \mat{N} := \mat{D} \, \mat{F} \rb{ \mat{I} - \mat{D} \, \mat{S} }^{-1}
\end{equation}
is known as the \emph{global next-generation matrix}. Typically, the \emph{global net reproductive number} is taken to be
\begin{equation} \label{eq02.10}
    \cR_0 := \specrad{ \mat{N} }.
\end{equation}
Later, we will characterize $\cR_0$ as the expected reproductive output for a specific choice of ${\vec{x}\in\cX}$.

The Cushing-Zhou and Li-Schneider Theorems and the Fundamental Theorem of Demography all apply to the model \eqref{eq02.06}, provided the hypotheses are met. It should be noted, however, that $\mat{N}$ is typically not irreducible. Notice also
\begin{equation} \label{eq02.11}
    \cR_0 \leq \frac{ \norm{ \mat{F} } }{ 1 - \norm{ \mat{S} } },
    \mymbox{with}
    \norm{ \mat{F} } = \max{ k \in \cE_m, i \in \cE_n }{ \scim{f}{k}{i} }
    \mymbox{and}
    \norm{ \mat{S} } = \max{ k \in \cE_m, i \in \cE_n }{ \scim{s}{k,k}{i} + \scim{s}{k+1,k}{i} } < 1.
\end{equation}
Consequently, the net reproductive number cannot be made arbitrarily large by varying the dispersion rates.

\paragraph{Local growth rates and net reproductive numbers.}

With no dispersion, that is ${\mat{D}=\mat{I}}$, each region is isolated and has its own \emph{local dispersion-free growth rate} of $\scm{r}{i}$. Similarly, we can specify the \emph{local dispersion-free next-generation matrix} $\matm{N}{i}$ and the \emph{local dispersion-free net reproductive number} $\scim{\cR}{0}{i}$. Explicitly,
\begin{equation} \label{eq02.12}
    \scm{r}{i} := \specrad{\matm{A}{i}},
    \quad
    \matm{N}{i} := \matm{F}{i} \sqb{ \mat{I} - \matm{S}{i} }^{-1},
    \mymbox{and}
    \scim{\cR}{0}{i}
    := \specrad{ \matm{N}{i} }
    = \scim{N}{11}{i}
    = \sum_{k=1}^m \frac{ \scim{f}{k}{i} }{ 1 - \scim{s}{k,k}{i} } \prod_{\ell=1}^{k-1} \frac{ \scim{s}{\ell+1,\ell}{i} }{ 1 - \scim{s}{\ell,\ell}{i} }
\end{equation}
for ${i\in\cE_n}$. The formula for $\scim{\cR}{0}{i}$ is the standard formula for an Usher matrix. See, for example, \cite{CushingZhou1994}.

\begin{remark} \label{thm02.06}
One of our goals is obtain an example where ${\cR_0'<1<\cR_0''}$, where ${\cR_0':=\specrad{\mat{N}'}}$ (dispersion-free) and ${\cR_0'':=\specrad{\mat{N}''}}$ (having dispersion) with ${\mat{N}':=\mat{F}\rb{\mat{I}-\mat{S}}^{-1}}$ and ${\mat{N}'':=\mat{D}\,\mat{F}\rb{\mat{I}-\mat{D}\,\mat{S}}^{-1}}$. By virtue of \eqref{eq02.11}, we cannot expect an unbounded magnification of the net reproductive number from dispersion.
\end{remark}

\begin{remark}
If we express $\scim{\cR}{0}{i}$ in \eqref{eq02.12} as ${\scim{\cR}{0}{i}=\sum_{k=1}^m\scim{f}{k}{i}\,\scim{\pi}{k}{i}}$, then $\scim{\pi}{k}{i}$ can be interpreted as the expected number of time increments that the individual will spend in stage $k$. When ${\scim{s}{k,k}{i}=0}$ for each ${k\in\cE_m}$ and $\matm{A}{i}$ is Leslie, $\scim{\pi}{k}{i}$ corresponds to the \emph{cumulative survival probability} ${\scim{\pi}{k}{i}=\prod_{\ell=1}^{k-1}\scim{s}{\ell+1,\ell}{i}}$.
\end{remark}


\paragraph{Alternative global net reproductive number.}

In many mathematical models, the net reproductive number is characterized as the maximum reproductive output of an individual over their lifetime. To see if this is true of our $\cR_0$, first define the \emph{alternative global net reproductive number}
\begin{equation} \label{eq02.13}
    \hcR_0
    := \max{ \vec{x} \in \cX }{ \norm{ \mat{N} \, \vec{x} } }.
\end{equation}
Since ${\mat{N}=\mat{D}\,\mat{W}}$, where
\begin{equation} \label{eq02.14}
    \mat{W} := \mat{F} \rb{ \mat{I} - \mat{D} \, \mat{S} }^{-1}
\end{equation}
will be referred to as the \emph{partial global next-generation matrix}, we know from Proposition~\ref{thm02.04} that $\hcR_0$ satisfies ${\hcR_0=\maxst{\norm{\mat{W}\,\vec{x}}}{\vec{x}\in\cX}}$. It turns out that $\mat{W}$ is easier to compute than $\mat{N}$. In fact, we need only compute a particular submatrix of $\mat{W}$.

\paragraph{Submatrices.}

As far as the net reproductive numbers are concerned, we can discard each row ${p\not\in\cK}$ and column ${q\not\in\cK}$ of $\mat{W}$ of appropriate matrices when it comes to computation. Specifically, for an appropriate matrix (chosen from $\mat{D}$, $\mat{N}$, and $\mat{W}$ in this paper) we define the corresponding \emph{newborn submatrix} ${\bmat{X}=\setb{\bX_{ij}}_{i,j=1}^n\in\matRnn}$ by
\begin{equation} \label{eq02.15}
    \bX_{ij} := X_{1+\rb{i-1}m,1+\rb{j-1}m}
    \mymbox{for}
    \mat{X}\in\matRd{mn}{mn}
    \mymbox{and}
    i, j \in \cE_n.
\end{equation}
This gives us the \emph{global dispersion submatrix} $\bmat{D}$, the \emph{global next-generation submatrix} $\bmat{N}$, and the \emph{partial global next-generation submatrix} $\bmat{W}$.

\begin{proposition} \label{thm02.08}
The dispersion submatrix $\bmat{D}$, given in \eqref{eq02.15}, satisfies ${\norm{\bmat{D}\,\vec{y}}=\norm{\vec{y}}}$ and ${\norm{\bmat{D}\,\mat{Y}}=\norm{\mat{Y}}}$ for every ${\vec{y}\in\Rnnn}$ and ${\mat{Y}\in\matRnnnn}$. Moreover, $\bmat{D}$ is column stochastic with ${\bD_{ij}=\scim{d}{1}{i,j}}$ for every ${i,j\in\cE_n}$ and ${\specrad{\bmat{D}}=\norm{\bmat{D}}=1}$.
\end{proposition}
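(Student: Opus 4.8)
The plan is to peel off the four claims one at a time, reducing each to the corresponding property of $\mat{D}$ already established in Proposition~\ref{thm02.04b}, together with the explicit block structure of $\mat{D}$.

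First I would identify $\bmat{D}$ concretely. By \eqref{eq02.15}, the $(i,j)$ entry of $\bmat{D}$ is $D_{1+(i-1)m,\,1+(j-1)m}$, which is the $(1,1)$ entry of the $(i,j)$ block $\matm{D}{i,j}$ of $\mat{D}$. From \eqref{eq02.04}, $\matm{D}{i,j}=\diag{\scim{d}{1}{i,j},\ldots,\scim{d}{m}{i,j}}$, so its $(1,1)$ entry is $\scim{d}{1}{i,j}$; hence $\bD_{ij}=\scim{d}{1}{i,j}$ for all $i,j\in\cE_n$, which is one of the assertions. Column stochasticity of $\bmat{D}$ is then immediate from the dispersion hypothesis \eqref{eq02.02} with $k=1$: $\sum_{i=1}^n\bD_{ij}=\sum_{i=1}^n\scim{d}{1}{i,j}=1$ for every $j\in\cE_n$. (Equivalently, one can note that the relation $\sum_{i=1}^n\matm{D}{i,j}=\mat{I}$ restricts to the $(1,1)$-entry.)

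Next I would derive the norm identities. Since $\bmat{D}\in\matRnnnn$ is column stochastic, for any $\vec{y}\in\Rnnn$ we have $\norm{\bmat{D}\,\vec{y}}=\sum_{i=1}^n\sum_{j=1}^n\bD_{ij}\,y_j=\sum_{j=1}^n y_j\sum_{i=1}^n\bD_{ij}=\sum_{j=1}^n y_j=\norm{\vec{y}}$, where interchanging the order of summation is justified because all terms are nonnegative (this is exactly the argument used for $\mat{D}$ in Proposition~\ref{thm02.04b}, now applied to the $n\times n$ column-stochastic matrix $\bmat{D}$). Applying this columnwise to a nonnegative matrix $\mat{Y}\in\matRnnnn$, and using that the $L_1$ matrix norm is the maximum of the column sums together with the fact that $\bmat{D}\mat{Y}$ is nonnegative, gives $\norm{\bmat{D}\,\mat{Y}}=\norm{\mat{Y}}$; alternatively, take $\norm{\bmat{D}\,\mat{Y}}=\max_{q}\norm{\bmat{D}\,(\mat{Y}\vec{e}_q)}=\max_q\norm{\mat{Y}\vec{e}_q}=\norm{\mat{Y}}$.

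Finally, for the spectral statement: taking $\mat{Y}=\mat{I}$ in the matrix-norm identity gives $\norm{\bmat{D}}=1$, and since $\specrad{\cdot}\le\norm{\cdot}$ always, $\specrad{\bmat{D}}\le 1$; conversely, a column-stochastic matrix has the all-ones row vector as a left eigenvector with eigenvalue $1$ (because its column sums are all $1$), so $1\in\spectrum{\bmat{D}}$ and therefore $\specrad{\bmat{D}}\ge 1$. Hence $\specrad{\bmat{D}}=\norm{\bmat{D}}=1$. I do not anticipate a genuine obstacle here; the only point requiring a little care is that the identity $\norm{\bmat{D}\,\mat{Y}}=\norm{\mat{Y}}$ uses nonnegativity of $\mat{Y}$ (it is false for general real $\mat{Y}$, where only $\le$ holds), so I would make sure the statement is quantified over $\mat{Y}\in\matRnnnn$ as written, mirroring exactly how Proposition~\ref{thm02.04b} was phrased for $\mat{D}$.
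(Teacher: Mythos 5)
Your proposal is correct and follows essentially the same route as the paper: the paper's proof of Proposition~\ref{thm02.08} simply states that it is similar to that of Proposition~\ref{thm02.04}, and your argument is exactly that proof transplanted to the $n\times n$ setting (column stochasticity from \eqref{eq02.02} with $k=1$ via the block structure, the $L_1$-norm computation for nonnegative vectors and matrices, and the all-ones left eigenvector for the spectral radius). No gaps.
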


\begin{proposition} \label{thm02.09}
The alternative global net reproductive number $\hcR_0$, defined in \eqref{eq02.13}, satisfies ${\hcR_0=\norm{\bmat{W}}}$, where $\bmat{W}$ is defined in \eqref{eq02.15}.
\end{proposition}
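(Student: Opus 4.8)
The plan is to reduce the $\rb{mn}$-dimensional maximization defining $\hcR_0$ to an $n$-dimensional maximization over $\cY$ and then recognize the result as the induced $L_1$-norm of $\bmat{W}$. As already observed just after \eqref{eq02.14}, Proposition~\ref{thm02.04} together with ${\mat{N}=\mat{D}\,\mat{W}}$ gives ${\norm{\mat{N}\,\vec{x}}=\norm{\mat{D}\rb{\mat{W}\,\vec{x}}}=\norm{\mat{W}\,\vec{x}}}$ for every $\vec{x}$, so it suffices to prove ${\max{\vec{x}\in\cX}{\norm{\mat{W}\,\vec{x}}}=\norm{\bmat{W}}}$.

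First I would record two structural facts. Because ${\mat{F}=\bigoplus_{i=1}^n\matm{F}{i}}$ and each $\matm{F}{i}$ has every row but the first equal to zero, $\mat{F}$ vanishes identically in each row ${p\notin\cK}$; since ${\mat{W}=\mat{F}\rb{\mat{I}-\mat{D}\,\mat{S}}^{-1}}$ is $\mat{F}$ multiplied by a matrix on the right, $\mat{W}$ also vanishes in each row ${p\notin\cK}$. Moreover, a vector ${\vec{x}\in\cX}$ is supported on $\cK$, so setting ${y_i:=x_{1+\rb{i-1}m}}$ for ${i\in\cE_n}$ produces ${\vec{y}\in\cY}$, and this correspondence ${\vec{x}\mapsto\vec{y}}$ is a bijection of $\cX$ onto $\cY$. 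Combining these: for such an $\vec{x}$ and its image $\vec{y}$, every component of $\mat{W}\,\vec{x}$ outside $\cK$ is zero, while for ${i\in\cE_n}$, using that $\vec{x}$ is supported on $\cK$ and the definition \eqref{eq02.15} of $\bmat{W}$,
\[
    \rb{\mat{W}\,\vec{x}}_{1+\rb{i-1}m}
    = \sum_{j=1}^n W_{1+\rb{i-1}m,\,1+\rb{j-1}m}\,x_{1+\rb{j-1}m}
    = \sum_{j=1}^n \bW_{ij}\,y_j
    = \rb{\bmat{W}\,\vec{y}}_i .
\]
Hence ${\norm{\mat{W}\,\vec{x}}=\norm{\bmat{W}\,\vec{y}}}$, and therefore ${\hcR_0=\max{\vec{x}\in\cX}{\norm{\mat{W}\,\vec{x}}}=\max{\vec{y}\in\cY}{\norm{\bmat{W}\,\vec{y}}}}$.

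It then remains to identify ${\max{\vec{y}\in\cY}{\norm{\bmat{W}\,\vec{y}}}}$ with ${\norm{\bmat{W}}}$. The inequality ``$\leq$'' is just the compatibility of the induced matrix norm with the vector norm, ${\norm{\bmat{W}\,\vec{y}}\leq\norm{\bmat{W}}\,\norm{\vec{y}}=\norm{\bmat{W}}}$ for ${\vec{y}\in\cY}$. For ``$\geq$'', I would invoke the elementary characterization that, for the $L_1$ vector norm, the induced matrix norm equals the maximum absolute column sum, so that ${\norm{\bmat{W}}=\max{j\in\cE_n}{\norm{\bmat{W}\,\vec{e}_j}}}$ with $\vec{e}_j$ the $j$-th standard basis vector of $\Rnnn$; since each ${\vec{e}_j\in\cY}$, the maximum is attained inside $\cY$, which gives ``$\geq$''. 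Together these prove ${\hcR_0=\norm{\bmat{W}}}$.

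The steps are all routine; the only points that want a little care are the index bookkeeping in the second paragraph --- matching the nonzero rows and columns of $\mat{W}$ to the entries of $\bmat{W}$ --- and, in the last paragraph, the remark that restricting the maximization from the full unit $L_1$-sphere to the probability simplex $\cY$ costs nothing because the extreme value of ${\norm{\bmat{W}\,\cdot}}$ over the unit ball is already attained at a basis vector. Neither is a genuine obstacle: the proposition is, in essence, the row structure of $\mat{F}$ (hence of $\mat{W}$) combined with the standard formula for the $L_1$ operator norm.
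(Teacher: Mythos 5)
Your proof is correct and follows essentially the same route as the paper's: reduce $\norm{\mat{N}\,\vec{x}}$ to $\norm{\mat{W}\,\vec{x}}$ via the column stochasticity of $\mat{D}$, pass from $\cX$ to $\cY$ using the zero rows of $\mat{W}$ outside $\cK$, and identify the maximum over $\cY$ with the largest absolute column sum of $\bmat{W}$. The only difference is that you verify the index bookkeeping inline, whereas the paper delegates it to the auxiliary matrices $\mat{G}$, $\mat{H}$, $\mat{L}$ of Proposition~\ref{thmB.01}.
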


\begin{remark}
The number of individuals born to an average individual initially in region ${j\in\cE_n}$ is given by ${\scim{\hcR}{0}{j}:=\sum_{i=1}^n\bW_{ij}}$, which we can refer to as the \emph{alternative local net reproductive number}. The alternative net reproductive number is the maximum of these, that is, ${\hcR_0=\maxst{\scim{\hcR}{0}{j}}{j\in\cE_n}}$.
\end{remark}

\paragraph{Main general results.}

We present here general results pertaining to the computation, relationships between, and the meaning of the two net reproductive numbers.

\begin{theorem} \label{thm02.11}
Consider the matrices $\mat{D}$, $\mat{N}$, and $\mat{W}$, respectively defined in \eqref{eq02.05}, \eqref{eq02.09}, and \eqref{eq02.14}, along with their newborn submatrices defined in \eqref{eq02.15}, and the net reproductive numbers $\cR_0$ and $\hcR_0$, respectively defined in \eqref{eq02.10} and \eqref{eq02.13}. Then, ${\bmat{N}=\bmat{D}\cdot\bmat{W}}$. Moreover, ${\specrad{\bmat{N}}=\specrad{\mat{N}}=\cR_0}$ and ${\specrad{\bmat{W}}=\specrad{\mat{W}}}$. Finally, ${\cR_0\leq\norm{\bmat{N}}\leq\norm{\mat{N}}}$ and ${\hcR_0=\norm{\bmat{W}}\leq\norm{\mat{W}}}$.
\end{theorem}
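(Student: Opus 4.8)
\medskip\noindent\textbf{Proof proposal.}
My plan is to reduce the whole statement to two elementary structural facts together with the standard observation that a product $\mat{G}\mat{H}$ and the reversed product $\mat{H}\mat{G}$ have the same nonzero eigenvalues (here one factor is rectangular). I would introduce the \emph{newborn selection matrix} $\mat{E}\in\matRd{mn}{n}$ whose $i$-th column is the standard basis vector $\vec{e}_{1+(i-1)m}$, so that $\mat{E}^\top\mat{X}\mat{E}=\bmat{X}$ for every $\mat{X}\in\matRd{mn}{mn}$ (compare \eqref{eq02.15}) and $\mat{E}\mat{E}^\top$ is the diagonal projection onto the coordinates indexed by $\cK$. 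The two facts I would verify up front are: (i) since each local fecundity matrix $\matm{F}{i}$ in \eqref{eq02.03} has only its first row nonzero, the global $\mat{F}$ has zero rows off $\cK$, i.e. $\mat{F}=\mat{E}\mat{E}^\top\mat{F}$; and (ii) since dispersion is stage-preserving (every block of $\mat{D}$ is diagonal, by \eqref{eq02.04}--\eqref{eq02.05}), the newborn coordinates decouple under $\mat{D}$, which gives $\mat{D}\mat{E}=\mat{E}\bmat{D}$, with $\bD_{ij}=\scim{d}{1}{i,j}$ as in Proposition~\ref{thm02.08}. Fact (i) is read straight off the shape of $\matm{F}{i}$; fact (ii) is a one-line block computation (the $j$-th columns of $\mat{D}\mat{E}$ and of $\mat{E}\bmat{D}$ both equal $\sum_i\scim{d}{1}{i,j}\,\vec{e}_{1+(i-1)m}$). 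These are the only places any computation enters.

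Granting (i), the identity $\bmat{N}=\bmat{D}\cdot\bmat{W}$ drops out: write $\bmat{N}=\mat{E}^\top\mat{D}\mat{F}(\mat{I}-\mat{D}\mat{S})^{-1}\mat{E}$ (the inverse exists by Proposition~\ref{thm02.04a}) and substitute $\mat{F}=\mat{E}\mat{E}^\top\mat{F}$ to get $\bmat{N}=(\mat{E}^\top\mat{D}\mat{E})(\mat{E}^\top\mat{F}(\mat{I}-\mat{D}\mat{S})^{-1}\mat{E})=\bmat{D}\cdot\bmat{W}$. For the spectral radii I would set $\mat{C}:=\mat{E}^\top\mat{F}(\mat{I}-\mat{D}\mat{S})^{-1}\in\matRd{n}{mn}$, so that $\mat{C}\mat{E}=\bmat{W}$. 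By (i), $\mat{W}=\mat{F}(\mat{I}-\mat{D}\mat{S})^{-1}=\mat{E}\mat{C}$, and since $\mat{E}\mat{C}$ and $\mat{C}\mat{E}$ have the same nonzero eigenvalues, $\specrad{\mat{W}}=\specrad{\mat{C}\mat{E}}=\specrad{\bmat{W}}$. Likewise, using first $\mat{N}=\mat{D}\mat{W}$ and then (i) and (ii), $\mat{N}=\mat{D}\mat{E}\mat{C}=\mat{E}(\bmat{D}\mat{C})$, so $\specrad{\mat{N}}=\specrad{(\bmat{D}\mat{C})\mat{E}}=\specrad{\bmat{D}(\mat{C}\mat{E})}=\specrad{\bmat{D}\bmat{W}}=\specrad{\bmat{N}}$, the last step being the identity just established; and $\cR_0:=\specrad{\mat{N}}$ by \eqref{eq02.10}, so $\specrad{\bmat{N}}=\specrad{\mat{N}}=\cR_0$.

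The remaining norm bounds are pure bookkeeping. We have $\cR_0=\specrad{\bmat{N}}\le\norm{\bmat{N}}$ because the spectral radius never exceeds the (submultiplicative) $L_1$ operator norm; $\hcR_0=\norm{\bmat{W}}$ is exactly Proposition~\ref{thm02.09}; and $\norm{\bmat{N}}\le\norm{\mat{N}}$ and $\norm{\bmat{W}}\le\norm{\mat{W}}$ because, for a nonnegative matrix, the $L_1$ operator norm is the largest column sum, and passing to the submatrix indexed by $\cK\times\cK$ only deletes rows (which cannot enlarge a column sum) and columns (which cannot enlarge the maximum over columns). I expect the only genuine content to be spotting the factorizations $\mat{W}=\mat{E}\mat{C}$ and $\mat{N}=\mat{E}(\bmat{D}\mat{C})$ and checking fact (ii); after that the three assertions follow formally, and this mild bit of pattern-recognition is the main obstacle.
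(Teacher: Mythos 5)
Your proposal is correct and follows essentially the same route as the paper: your $\mat{E}$ is the paper's auxiliary matrix $\mat{H}$ (so $\mat{E}^\top=\mat{G}$ and $\mat{E}\,\mat{E}^\top=\mat{L}$), your fact~(i) is Proposition~\ref{thmB.01b}, and the factor-through-$\mat{E}$ argument for $\bmat{N}=\bmat{D}\cdot\bmat{W}$ and the column-sum bookkeeping reproduce Propositions~\ref{thmB.02c} and~\ref{thmB.02b}. The only cosmetic difference is that for $\specrad{\bmat{X}}=\specrad{\mat{X}}$ you invoke the equality of the nonzero spectra of $\mat{E}\,\mat{C}$ and $\mat{C}\,\mat{E}$ (together with the commutation $\mat{D}\,\mat{E}=\mat{E}\,\bmat{D}$), whereas the paper's Proposition~\ref{thmB.02a} permutes the zero rows of $\mat{X}$ into a block upper-triangular form \eqref{eqB.02}; these are interchangeable justifications of the same reduction.
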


\begin{corollary} \label{thm02.12}
Consider the submatrices $\bmat{D}$ and $\bmat{W}$ given in \eqref{eq02.15} and the global net reproductive number $\cR_0$ given in \eqref{eq02.10}. Then, ${\bmat{D}=\mat{I}}$ if and only if ${\scim{d}{1}{i,i}=1}$ for every ${i\in\cE_n}$ (that is, newborns do not disperse). Moreover, if ${\bmat{D}=\mat{I}}$ then ${\cR_0=\specrad{\bmat{W}}}$.
\end{corollary}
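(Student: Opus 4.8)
The plan is to deduce both assertions directly from Proposition~\ref{thm02.08} and Theorem~\ref{thm02.11}, so essentially no new computation is needed. For the equivalence, I would start from the fact, supplied by Proposition~\ref{thm02.08}, that $\bmat{D}$ is column stochastic with entries ${\bD_{ij}=\scim{d}{1}{i,j}}$ for all ${i,j\in\cE_n}$. The implication ${\bmat{D}=\mat{I}}\ \Rightarrow\ {\scim{d}{1}{i,i}=1}$ is then immediate upon reading off the diagonal entries. For the converse, suppose ${\scim{d}{1}{i,i}=1}$ for every ${i\in\cE_n}$; since each column of $\bmat{D}$ sums to $1$, we have ${\sum_{i=1}^n\scim{d}{1}{i,j}=1}$ for each fixed $j$, and nonnegativity of the off-diagonal entries then forces ${\scim{d}{1}{i,j}=0}$ whenever ${i\ne j}$. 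Hence ${\bmat{D}=\mat{I}}$.

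For the second assertion, I would invoke Theorem~\ref{thm02.11}, which gives ${\bmat{N}=\bmat{D}\cdot\bmat{W}}$ together with ${\specrad{\bmat{N}}=\specrad{\mat{N}}=\cR_0}$. Under the hypothesis ${\bmat{D}=\mat{I}}$ we obtain ${\bmat{N}=\bmat{W}}$, and therefore ${\cR_0=\specrad{\bmat{N}}=\specrad{\bmat{W}}}$, as claimed.

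Since every ingredient is already established, there is no substantial obstacle here; the only point meriting a moment's care is the converse direction of the equivalence, where one must genuinely use column stochasticity of $\bmat{D}$ (not merely the diagonal condition) to conclude that its off-diagonal entries vanish.
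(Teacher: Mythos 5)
Your proposal is correct and follows exactly the paper's route: the paper's own proof simply cites \eqref{eq02.02}, Proposition~\ref{thm02.08}, and Theorem~\ref{thm02.11}, which are precisely the ingredients you use (column stochasticity plus nonnegativity for the equivalence, and ${\bmat{N}=\bmat{D}\cdot\bmat{W}}$ with ${\specrad{\bmat{N}}=\cR_0}$ for the spectral radius claim). Your write-up just makes the details explicit.
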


\begin{theorem} \label{thm02.13}
Consider the next-generation matrix $\mat{N}$ and partial next-generation matrix $\mat{W}$, respectively given in \eqref{eq02.09} and \eqref{eq02.14}, along with their respective submatrices, given in \eqref{eq02.15}. Consider also the net reproductive numbers $\cR_0$ and $\hcR_0$, given in \eqref{eq02.10} and \eqref{eq02.13} respectively.
\begin{enumerate}[label={(\alph*)},ref={\thetheorem(\alph*)},leftmargin=*,widest=a]
    \item\label{thm02.13a}
        The net reproductive numbers satisfy ${\cR_0\leq\hcR_0}$.
    \item\label{thm02.13b}
        There exists ${\vecxi\in\cX}$ such that ${\alpha\leq\cR_0=\norm{\mat{N}\,\vecxi}\leq\beta}$, where ${\alpha:=\minst{\sum_{i=1}^n\bN_{ij}}{j\in\cE_n}}$ is the smallest absolute column sum of $\bmat{N}$ and ${\beta:=\maxst{\sum_{i=1}^n\bN_{ij}}{j\in\cE_n}}$ is the largest absolute column sum of $\bmat{N}$.
    \item\label{thm02.13c}
        If $\bmat{N}$ is irreducible with ${\veczeta\in\cY}$ being the normalized Perron right eigenvector associated with the eigenvalue ${\cR_0=\specrad{\bmat{N}}}$, then we can take ${\vecxi:=\mat{H}\,\veczeta}$, which is a nonnegative right eigenvector of $\mat{N}$. Note: The nonzero components of $\vecxi$ are ${\xi_{1+\rb{i-1}m}=\zeta_i}$ for all ${i\in\cE_n}$ and $\mat{H}$ is formally defined in Table~\ref{tabB.01}.
    \item\label{thm02.13d}
        Suppose $\lambda$ is an eigenvalue of $\bmat{N}$ or $\bmat{W}$ with associated right eigenvector ${\vec{v}\in\cY}$. Let ${\vec{u}:=\mat{H}\,\vec{v}}$. Then, $\norm{\mat{N}\,\vec{u}}$ is the expected reproductive output of the newborn individual given by initial distribution ${\vec{u}\in\cX}$.
\end{enumerate}
\end{theorem}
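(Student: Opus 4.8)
The plan is to funnel all four parts through a single auxiliary fact about the embedding $\mat{H}$ of Table~\ref{tabB.01}, which satisfies $\rb{\mat{H}\,\vec{y}}_{1+\rb{i-1}m}=y_i$ and $\rb{\mat{H}\,\vec{y}}_p=0$ for $p\notin\cK$: it intertwines the two operators with their newborn submatrices, i.e. $\mat{W}\,\mat{H}=\mat{H}\,\bmat{W}$ and $\mat{N}\,\mat{H}=\mat{H}\,\bmat{N}$. First I would record two structural observations. Since $\mat{F}=\bigoplus_{i=1}^n\matm{F}{i}$ has nonzero entries only in the rows indexed by $\cK$, the same holds for $\mat{W}=\mat{F}\rb{\mat{I}-\mat{D}\,\mat{S}}^{-1}$; comparing entries then gives $\mat{W}\,\mat{H}=\mat{H}\,\bmat{W}$. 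Because $\mat{D}$ is block-partitioned with diagonal blocks $\matm{D}{i,j}$ and, by Proposition~\ref{thm02.08}, $\bD_{ij}=\scim{d}{1}{i,j}$, a second entry comparison gives $\mat{D}\,\mat{H}=\mat{H}\,\bmat{D}$. Composing these and using $\bmat{N}=\bmat{D}\cdot\bmat{W}$ from Theorem~\ref{thm02.11} yields $\mat{N}\,\mat{H}=\mat{D}\,\mat{W}\,\mat{H}=\mat{D}\,\mat{H}\,\bmat{W}=\mat{H}\,\bmat{D}\,\bmat{W}=\mat{H}\,\bmat{N}$. I would also note that $\mat{H}$ maps $\cY$ into $\cX$ and preserves the $L_1$-norm, since it merely relocates the components into the newborn slots.

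For (a), I would chain known estimates: $\cR_0=\specrad{\bmat{N}}\le\norm{\bmat{N}}$ (Theorem~\ref{thm02.11}); then $\norm{\bmat{N}}=\norm{\bmat{D}\cdot\bmat{W}}=\norm{\bmat{W}}$ by Proposition~\ref{thm02.08} applied to $\bmat{W}\ge\mat{0}$; finally $\norm{\bmat{W}}=\hcR_0$ by Proposition~\ref{thm02.09}. Hence $\cR_0\le\hcR_0$. For (b), the double inequality $\alpha\le\cR_0\le\beta$ is exactly the standard estimate bounding the spectral radius of a nonnegative matrix between its smallest and largest column sums, applied to $\bmat{N}\ge\mat{0}$ together with $\cR_0=\specrad{\bmat{N}}$. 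For the witness $\vecxi$, I would invoke the reducible form of Perron--Frobenius: a nonnegative square matrix has a nonnegative right eigenvector at its spectral radius, so choose $\veczeta\ge\vec{0}$ with $\bmat{N}\,\veczeta=\cR_0\,\veczeta$, normalized so that $\veczeta\in\cY$, and set $\vecxi:=\mat{H}\,\veczeta\in\cX$. The intertwining identity then gives $\mat{N}\,\vecxi=\mat{N}\,\mat{H}\,\veczeta=\mat{H}\,\bmat{N}\,\veczeta=\cR_0\,\mat{H}\,\veczeta=\cR_0\,\vecxi$, whence $\norm{\mat{N}\,\vecxi}=\cR_0\,\norm{\vecxi}=\cR_0$.

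For (c), irreducibility of $\bmat{N}$ upgrades the eigenvector of (b) to the strictly positive Perron eigenvector (unique up to scaling), which after normalization lies in $\cY$; the identity $\mat{N}\,\mat{H}=\mat{H}\,\bmat{N}$ then shows at once that $\vecxi=\mat{H}\,\veczeta$ is a nonnegative right eigenvector of $\mat{N}$ for the eigenvalue $\cR_0$, and its nonzero entries are $\xi_{1+\rb{i-1}m}=\zeta_i$ directly from the definition of $\mat{H}$. For (d), the hypothesis $\vec{v}\in\cY$ immediately gives $\vec{u}=\mat{H}\,\vec{v}\in\cX$ (nonnegative components, unit $L_1$-norm); and by the derivation preceding \eqref{eq02.09}, for every $\vec{x}\in\cX$ the vector $\mat{N}\,\vec{x}=\sum_{t=0}^\infty\rb{\mat{D}\,\mat{F}}\rb{\mat{D}\,\mat{S}}^t\vec{x}$ is the expected population vector of all offspring produced over the lifetime of a newborn with initial distribution $\vec{x}$, so $\norm{\mat{N}\,\vec{x}}$ is its expected lifetime reproductive output; apply this with $\vec{x}=\vec{u}$. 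One may add, using the intertwining identities and Proposition~\ref{thm02.04}, that in fact $\norm{\mat{N}\,\vec{u}}=\lambda$, though the statement only asks for the interpretation.

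I expect the only genuine obstacle to be the pair of intertwining identities in the first paragraph --- in particular checking that the diagonal-block structure of $\mat{D}$ meshes with the newborn embedding so precisely that $\mat{D}\,\mat{H}=\mat{H}\,\bmat{D}$, and that $\mat{W}$ really has no nonzero rows outside $\cK$. Once these are in hand, parts (a)--(d) are short: (a) is a chain of three cited equalities and inequalities, (b) combines the classical column-sum bound with reducible Perron--Frobenius, and (c)--(d) are immediate consequences of $\mat{N}\,\mat{H}=\mat{H}\,\bmat{N}$ together with the modeling interpretation of $\mat{N}$ established around \eqref{eq02.09}.
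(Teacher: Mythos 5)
Your proof is correct, and for parts (a), (c), and (d) it follows essentially the same route as the paper: part (a) is the identical chain $\cR_0\leq\norm{\bmat{N}}=\norm{\bmat{D}\cdot\bmat{W}}=\norm{\bmat{W}}=\hcR_0$, and your intertwining identities $\mat{W}\,\mat{H}=\mat{H}\,\bmat{W}$ and $\mat{N}\,\mat{H}=\mat{H}\,\bmat{N}$ are exactly what the paper packages as Propositions~\ref{thmB.01} and \ref{thmB.03} (in particular ${\mat{L}\,\mat{X}\,\mat{H}=\mat{H}\,\bmat{X}}$ together with ${\mat{L}\,\mat{N}=\mat{N}}$ and ${\mat{L}\,\mat{W}=\mat{W}}$); your verification that the newborn columns of $\mat{D}$ are supported on newborn rows is the one piece of structural checking the paper handles through ${\bmat{N}=\bmat{D}\cdot\bmat{W}}$ in Theorem~\ref{thm02.11}. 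The genuine divergence is in the construction of the witness $\vecxi$ in part (b). You invoke the general (not necessarily irreducible) Perron--Frobenius theorem to obtain a nonnegative eigenvector $\veczeta\in\cY$ of $\bmat{N}$ at $\specrad{\bmat{N}}$ and push it forward by $\mat{H}$; the paper instead applies the Intermediate Value Theorem to the continuous linear functional ${\vecf{g}{\vec{y}}=\norm{\bmat{N}\,\vec{y}}=\sum_{j}\rb{\sum_i\bN_{ij}}y_j}$ on the compact connected set $\cY$, which attains $\alpha$ and $\beta$ at standard basis vectors and therefore attains the intermediate value $\cR_0$. Your route yields the stronger conclusion that $\vecxi$ can always be taken to be an actual eigenvector of $\mat{N}$ (making part (c) a special case), at the cost of citing the reducible form of Perron--Frobenius, which the paper's appendix does not state (it records only the irreducible version); the paper's IVT argument is more elementary and self-contained but produces a $\veczeta$ that need not be an eigenvector. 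Both arguments are valid.
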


\begin{remark}
We interpret $\cR_0$ as the expected reproductive output of a newborn with distribution ${\veczeta:=\mat{G}\,\vecxi}$ (see Theorem~\ref{thm02.13}). Similarly, $\hcR_0$ is the maximum reproductive output of a newborn being initially in a particular single region (the one yielding the largest column sum of $\bmat{W}$).
\end{remark}

\begin{remark}
Typically, $\bmat{N}$ is irreducible but $\mat{N}$ is not. Using Theorem~\ref{thm02.13} and the Perron-Frobenius Theorem, if $\bmat{N}$ is irreducible and ${m>1}$ then $\mat{N}$ is reducible. Alternatively, appeal to \eqref{eqB.02} with ${\mat{X}=\mat{N}}$.
\end{remark}

\begin{remark}
The alternative net reproductive number $\hcR_0$ did not turn out to be as useful as the authors had hoped. However, it still has some advantages. First, $\hcR_0$ has a nice biological interpretation. Second, $\hcR_0$ is easier to compute than $\cR_0$. Third, $\hcR_0$ provides a simple upper bound on $\cR_0$, specifically ${\cR_0\leq\hcR_0=\norm{\bmat{W}}}$ whereas \eqref{eqA.02} guarantees only ${\cR_0\leq\norm{\mat{W}}}$. Finally, in the case that ${\cR_0<1<\hcR_0}$ there may be an initial population surge (for example, in the region $i$ with ${\hcR_0=\scim{\hcR}{0}{i}}$) followed by a decline, whereas there cannot be an initial surge when ${\hcR_0<1}$.
\end{remark}

\section{Examples} \label{sec03}

We will present a few examples illustrating some of the material from \S\ref{sec02}. The first will be a quick introduction to the graph-reduction method that we use later. The second will be longer and precludes the possibility of dispersion increasing the global net reproductive number (compared to the dispersion-free global net reproductive number). The final example will provide a model where dispersion in fact increases the global net reproductive number.

\subsection{Example \arabic{egnum}} \label{sec03.01}
\addtocounter{egnum}{1}

\begin{figure}[t]
    \begin{center}
        \includegraphics[scale=0.75]{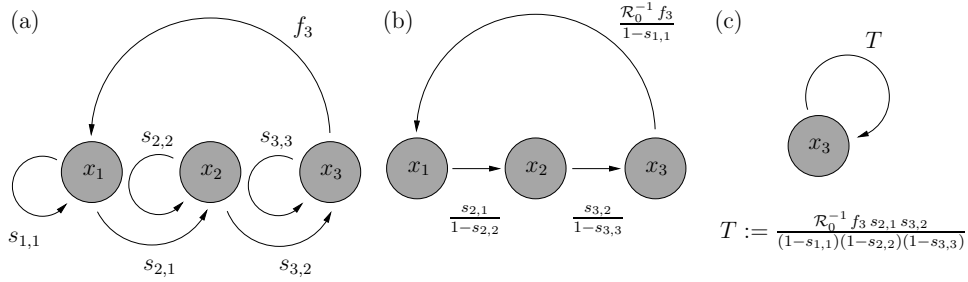}
        \caption{\textbf{(a)} Life-cycle graph for three stages with only the third stage reproducing. \textbf{(b)} A reduced $z$-transformed graph of the first, formed by multiplying the fecundity by $\cR_0^{-1}$ and thrice applying the third Mason Equivalence Rule in Figure~\ref{figA.02}. \textbf{(c)} A reduced graph of the second, formed by twice applying the second Mason Equivalence Rule in Figure~\ref{figA.02}.}
        \label{fig03.01}
    \end{center}
\end{figure}

In this example, we use graph reduction to calculate the net reproductive number for a three-stage (that is, ${m=3}$) model. Consider the life-cycle graph given in Figure~\ref{fig03.01}(a), corresponding to the Usher matrix
\[
    \mat{A} := \mat{F} + \mat{S},
    \mymbox{where}
    \mat{F} := \threebythreematrix{0}{0}{\sci{f}{3}}{0}{0}{0}{0}{0}{0}
    \mymbox{and}
    \mat{S} := \threebythreematrix{\sci{s}{1,1}}{0}{0}{\sci{s}{2,1}}{\sci{s}{2,2}}{0}{0}{\sci{s}{3,2}}{\sci{s}{3,3}}.
\]
Define, for ${\cR_0>0}$, the $z$-transformed matrix ${\mat{B}:=\cR_0^{-1}\,\mat{F}+\mat{S}}$. By the Cushing-Zhou Theorem, ${\specrad{\mat{B}}=1}$. Regard the graph of $\mat{B}$ as a system of linear equations ${\vec{x}=\mat{B}\,\vec{x}}$, with each $x_k$ as a vertex. For each ${k,\ell\in\setb{1,2,3}=\cE_3}$, we have a directed edge with rate $B_{k\ell}$ from $x_\ell$ to $x_k$ provided ${B_{k\ell} \ne 0}$. Now, rewrite the relation ${\vec{x}=\mat{B}\,\vec{x}}$ as ${\rb{\mat{B}-\mat{I}}\vec{x}=\vec{0}}$. From this equation alone it follows that $1$ is an eigenvalue of $\mat{B}$. By performing the graph-equivalent of Gaussian Elimination, specifically the Mason Equivalence Rules \cite{MasonZimmermann1960}, we can obtain an equivalent but simpler graph. See \S\ref{appA.02} for further information.

It can be shown that the graph of $\mat{B}$ is equivalent to that in Figure~\ref{fig03.01}(c) (details are given in the caption). Since it represents the equation ${x_3=Tx_3}$, obviously ${T=1}$ and thus
\begin{equation} \label{eq03.01}
    \cR_0
    = \frac{ \sci{f}{3} \, \sci{s}{3,2} \, \sci{s}{2,1} }{ \rb{ 1 - \sci{s}{1,1} } \rb{ 1 - \sci{s}{2,2} } \rb{ 1 - \sci{s}{3,3} } }.
\end{equation}
This is in agreement with formula \eqref{eq02.12}. Biologically, this is the expected total reproductive output of an average newborn, with ${\fracslash{\sci{s}{2,1}}{\rb{1-\sci{s}{1,1}}}}$ being the probability of making it the second stage, ${\fracslash{\sci{s}{3,2}}{\rb{1-\sci{s}{2,2}}}}$ being the probability of later making to the third stage, and ${\fracslash{1}{\rb{1-\sci{s}{3,3}}}}$ being the expected number of time increments at the third stage after having made it there.

\subsection{Example \arabic{egnum}} \label{sec03.02}
\addtocounter{egnum}{1}

\paragraph{General result.} Before we set up the model for this example, we will present a result which is applicable to a more broad class of models.

\begin{proposition} \label{thm03.01}
Suppose that the matrices $\mat{S}$ and $\mat{D}$, given in \eqref{eq02.05}, satisfy ${\mat{D}\,\mat{S}=\mat{S}}$ (equivalently, the local matrices satisfy ${\matm{D}{i,i}\,\matm{S}{i}=\matm{S}{i}}$ for each ${i\in\cE_n}$ and ${\matm{D}{i,j}\,\matm{S}{j}=\mat{0}}$ for each ${i,j\in\cE_n}$ with ${i \ne j}$).
\begin{enumerate}[label={(\alph*)},ref={\thetheorem(\alph*)},leftmargin=*,widest=a]
    \item\label{thm03.01a}
        The partial next-generation matrix and its submatrix satisfy ${\mat{W}=\bigoplus_{i=1}^n\matm{N}{i}}$ and ${\bmat{W}=\bigoplus_{i=1}^n\scim{\cR}{0}{i}}$, where $\matm{N}{i}$ and $\scim{\cR}{0}{i}$ are defined in \eqref{eq02.12}, $\mat{W}$ is defined in \eqref{eq02.14}, and $\bmat{W}$ is defined in \eqref{eq02.15}.
    \item\label{thm03.01b}
         The alternative net reproductive number satisfies ${\hcR_0=\max{i\in\cE_n}{\scim{\cR}{0}{i}}}$ and the next-generation submatrix satisfies ${\bN_{ij}=\scim{d}{1}{i,j}\,\scim{\cR}{0}{j}}$ for each ${i,j\in\cE_n}$, where $\hcR_0$ and $\bmat{N}$ are respectively defined in \eqref{eq02.13} and \eqref{eq02.15}.
    \item\label{thm03.01c}
        Define ${\alpha:=\min{i\in\cE_n}{\scim{\cR}{0}{i}}}$ and ${\beta:=\max{i\in\cE_n}{\scim{\cR}{0}{i}}}$. The net reproductive number satisfies ${\alpha\leq\cR_0\leq\beta}$. Furthermore, for fixed ${j\in\cE_n}$, if ${\scim{d}{1}{i,j}>0}$ for some ${i\in\cE_n}$ then ${\pderivslash{\cR_0}{\scim{\cR}{0}{j}}>0}$.
\end{enumerate}
\end{proposition}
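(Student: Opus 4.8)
The plan is to settle parts (a) and (b) by direct computation from the hypothesis $\mat{D}\,\mat{S}=\mat{S}$, and then to reduce (c) to elementary facts about the spectral radius of a product of a column-stochastic matrix and a nonnegative diagonal matrix. For (a) I would note that $\mat{D}\,\mat{S}=\mat{S}$ forces $\mat{I}-\mat{D}\,\mat{S}=\mat{I}-\mat{S}$, so \eqref{eq02.14} collapses to $\mat{W}=\mat{F}\rb{\mat{I}-\mat{S}}^{-1}$; since $\mat{F}$ and $\mat{S}$ are block-diagonal by \eqref{eq02.05}, so is $\rb{\mat{I}-\mat{S}}^{-1}=\bigoplus_{i=1}^n\sqb{\mat{I}-\matm{S}{i}}^{-1}$, whence $\mat{W}=\bigoplus_{i=1}^n\matm{F}{i}\sqb{\mat{I}-\matm{S}{i}}^{-1}=\bigoplus_{i=1}^n\matm{N}{i}$ by \eqref{eq02.12}; reading off the newborn rows and columns via the submatrix rule \eqref{eq02.15} (the off-diagonal blocks vanish and the $(1,1)$-entry of $\matm{N}{i}$ is $\scim{\cR}{0}{i}$ by \eqref{eq02.12}) yields $\bmat{W}=\bigoplus_{i=1}^n\scim{\cR}{0}{i}$. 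For (b), Proposition~\ref{thm02.09} reduces $\hcR_0$ to $\norm{\bmat{W}}$, which for the diagonal matrix $\bmat{W}$ is its largest entry, namely $\max{i\in\cE_n}{\scim{\cR}{0}{i}}$; and Theorem~\ref{thm02.11} together with $\bD_{ij}=\scim{d}{1}{i,j}$ (Proposition~\ref{thm02.08}) and the diagonality of $\bmat{W}$ gives $\bN_{ij}=\bD_{ij}\,\bW_{jj}=\scim{d}{1}{i,j}\,\scim{\cR}{0}{j}$.

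For (c) I would use that $\cR_0=\specrad{\bmat{N}}$ (Theorem~\ref{thm02.11}) with $\bmat{N}=\bmat{D}\cdot\bmat{W}$ and $\bmat{W}$ diagonal with entries in $\sqb{\alpha,\beta}$, so that $\alpha\bmat{D}\leq\bmat{N}\leq\beta\bmat{D}$ entrywise; monotonicity of the spectral radius on nonnegative matrices and $\specrad{\bmat{D}}=1$ (Proposition~\ref{thm02.08}) then give $\alpha\leq\cR_0\leq\beta$ (alternatively, the $j$-th column sum of $\bmat{N}$ is $\scim{\cR}{0}{j}\sum_i\scim{d}{1}{i,j}=\scim{\cR}{0}{j}$, so Theorem~\ref{thm02.13b} applies directly). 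For the derivative claim I would regard $\cR_0$ as a function of $\rb{\scim{\cR}{0}{1},\ldots,\scim{\cR}{0}{n}}$ with $\bmat{D}$ held fixed; since $\bN_{k\ell}=\scim{d}{1}{k,\ell}\,\scim{\cR}{0}{\ell}$ by (b), this map is affine with $\pderivslash{\bN_{k\ell}}{\scim{\cR}{0}{j}}$ equal to $\scim{d}{1}{k,j}$ when $\ell=j$ and to $0$ otherwise. Assuming $\bmat{N}$ irreducible, so that $\cR_0$ is a simple eigenvalue with positive left and right eigenvectors $\vec{u},\vec{v}$ normalized by $\vec{u}^*\vec{v}=1$, the standard sensitivity formula $\pderivslash{\cR_0}{\bN_{k\ell}}=u_k\,v_\ell$ and the chain rule give
\[
    \pderiv{\cR_0}{\scim{\cR}{0}{j}} = v_j\sum_{k=1}^n\scim{d}{1}{k,j}\,u_k > 0,
\]
since $\sum_{k=1}^n\scim{d}{1}{k,j}=1$ forces some $\scim{d}{1}{k,j}>0$ while $u_k,v_j>0$.

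Parts (a) and (b) are purely mechanical, so the main obstacle will be the regularity required in (c): the sensitivity formula needs $\cR_0$ to be a simple eigenvalue of $\bmat{N}$, which is guaranteed by irreducibility of $\bmat{N}$ but can fail when the newborn-dispersion graph is reducible or when some $\scim{\cR}{0}{i}$ vanishes. (Note that the stated condition ``$\scim{d}{1}{i,j}>0$ for some $i$'' is automatic from the column stochasticity of $\bmat{D}$, so the genuinely operative assumption is the one making $\cR_0$ a differentiable simple eigenvalue.) To cover the reducible case one would instead invoke the entrywise monotonicity of $\bmat{N}$ in $\scim{\cR}{0}{j}$ to conclude that $\cR_0$ is nondecreasing, and then restrict attention to the irreducible diagonal block of $\bmat{N}$ that attains $\specrad{\bmat{N}}$ to recover strict positivity of the derivative.
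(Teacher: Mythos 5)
Your proposal is correct and follows essentially the same route as the paper: parts (a) and (b) are the identical direct computations, and for (c) the paper likewise sandwiches $\alpha\,\bmat{D}\leq\bmat{N}\leq\beta\,\bmat{D}$, uses $\specrad{\bmat{D}}=1$, and appeals to Perron--Frobenius monotonicity of the spectral radius for the derivative claim. Your added care about the simplicity/irreducibility needed for the sensitivity formula (and the observation that the hypothesis ``$\scim{d}{1}{i,j}>0$ for some $i$'' is automatic from column stochasticity) is a legitimate refinement of a step the paper dispatches in one line, but it does not change the argument.
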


\paragraph{Local matrices.}

Consider a three-stage (${m=3}$), multi-region scenario. Looking ahead to the application of the invasive round goby fish, we will call the three stages \emph{larvae}, \emph{juveniles}, and \emph{adults}. Suppose that the definitions of the time increment and stages are chosen so that larvae always advance to become juveniles and only the adults reproduce after one unit of time. This behaviour is captured by the local demography matrices
\begin{equation} \label{eq03.02}
    \matm{A}{i} := \matm{F}{i} + \matm{S}{i},
    \mymbox{where}
    \matm{F}{i} := \threebythreematrix{0}{0}{\scim{f}{3}{i}}{0}{0}{0}{0}{0}{0}
    \mymbox{and}
    \matm{S}{i} := \threebythreematrix{0}{0}{0}{\scim{s}{2,1}{i}}{\scim{s}{2,2}{i}}{0}{0}{\scim{s}{3,2}{i}}{\scim{s}{3,3}{i}},
\end{equation}
for ${i\in\cE_n}$. The parameters must be such that ${\matm{F}{i},\matm{S}{i}\geq\mat{0}}$ and ${\norm{\matm{S}{i}}<1}$ for each $i$. Further, suppose that only larvae disperse, so that the local dispersion matrices are
\begin{equation} \label{eq03.03}
    \matm{D}{i,j} := \diag{\scim{d}{1}{i,j},0,0}
    \mymbox{for}
    i \ne j
    \mymbox{and}
    \matm{D}{i,i} := \diag{\scim{d}{1}{i,i},1,1},
\end{equation}
where ${i,j\in\cE_n}$. Of course, we require ${\scim{d}{1}{i,j} \geq 0}$ for each ${i,j\in\cE_n}$ and ${\sum_{i=1}^n\scim{d}{1}{i,j}=1}$ for each ${j\in\cE_n}$. In this example, we consider a single patch as an input-output system among multiple connected patches. We will use graph reduction to explicitly find the global net reproductive number $\cR_0$ for the two-region case. We will also find an implicit expression for $\cR_0$ for the three-region case. A quick calculation will confirm that ${\matm{D}{i,i}\,\matm{S}{i}=\matm{S}{i}}$ for each ${i\in\cE_n}$ and ${\matm{D}{i,j}\,\matm{S}{j}=\mat{0}}$ for each ${i,j\in\cE_n}$ with ${i \ne j}$, confirming that Proposition~\ref{thm03.01} is applicable.

\paragraph{One region as an input-output system.}

\begin{figure}[t]
    \begin{center}
        \includegraphics[scale=0.75]{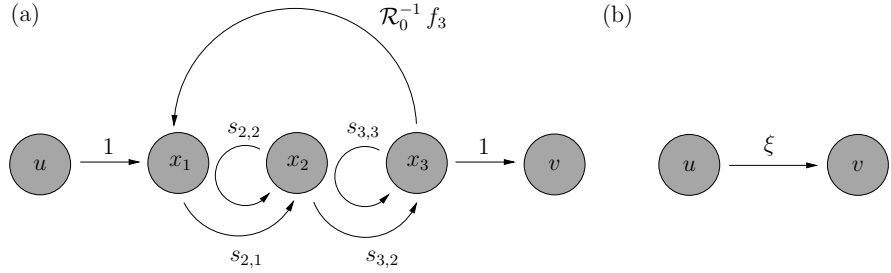}
        \caption{\textbf{(a)} The $z$-transformed life-cycle graph for an Usher model with ${m=3}$ life stages in which only the adults ${k=3}$ reproduce and the newborns ${k=1}$ cannot remain in the same stage during one time increment. \textbf{(b)} The input-output system with phantom nodes $u$ and $v$ along with rate $\xi$.}
        \label{fig03.02}
    \end{center}
\end{figure}

Consider the $z$-transformed life-cycle graph given Figure~\ref{fig03.02}(a). The local demography matrix is the same as in \eqref{eq03.02} without superscripts. Here, there is only input $u$ going to $x_1$ (the newborns) and only output $v$ leaving $x_3$ (the adults). It can be shown, using graph reduction in a manner similar to that outlined in Figure~\ref{fig03.01}, that
\begin{equation} \label{eq03.04}
    v = \xi \, u,
    \mymbox{where}
    \xi := \frac{ \sci{s}{3,2} \, \sci{s}{2,1} }{ \rb{ 1 - \sci{s}{2,2} } \rb{ 1 - \sci{s}{3,3} } - \cR_0^{-1} \, \sci{f}{3} \, \sci{s}{3,2} \, \sci{s}{2,1} }.
\end{equation}

\begin{remark}
If we allow newborns to remain newborns after one time increment, then the factor $\rb{1-\sci{s}{1,1}}$ is inserted in the obvious place in \eqref{eq03.04}. However, when ${\sci{s}{1,1}=0}$ it is easier to link multiple patches. Note that, as we can see from \eqref{eq03.01}, the denominator of $\xi$ is zero when the patch is isolated, that is ${u=0}$.
\end{remark}

\paragraph{Two regions.}

Consider now a two-patch model in which each patch is of the form presented in Figure~\ref{fig03.02}, with local demography matrices given in \eqref{eq03.02}, and only the newborns disperse, with the local dispersion matrices given in \eqref{eq03.03}. See Figure~\ref{fig03.03}(a). Here, we need to amend (by indexing everything by the patch and by replacing the fecundity by the fraction remaining in the same patch) the formula \eqref{eq03.04} and utilize
\begin{equation} \label{eq03.05}
    \xi_i := \frac{ \scim{s}{3,2}{i} \, \scim{s}{2,1}{i} }{ \sqb{ 1 - \scim{s}{2,2}{i} } \sqb{ 1 - \scim{s}{3,3}{i} } - \cR_0^{-1} \, \scim{d}{1}{i,i} \, \scim{f}{3}{i} \, \scim{s}{3,2}{i} \, \scim{s}{2,1}{i} }
\end{equation}
for each respective patch.

\begin{figure}[t]
    \begin{center}
        \includegraphics[scale=0.75]{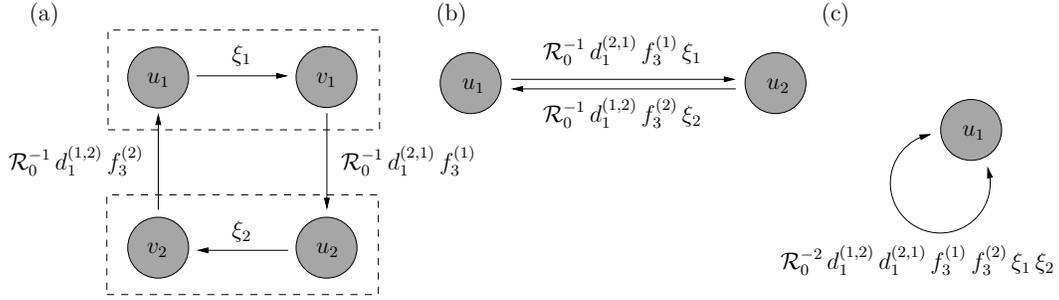}
        \caption{\textbf{(a)} The $z$-transformed life-cycle graph for a two-patch model where each patch is of the form in Figure~\ref{fig03.02}(a). \textbf{(b)} A reduced graph, formed by twice using the second Mason Equivalence Rule in Figure~\ref{figA.02}. \textbf{(c)} A further-reduced graph, also formed by using the second Mason Equivalence Rule in Figure~\ref{figA.02}.}
        \label{fig03.03}
    \end{center}
\end{figure}

Using \eqref{eq02.12}, the net reproductive numbers for the individual patches in the absence of dispersion are given by
\begin{equation} \label{eq03.06}
    \scim{\cR}{0}{i}
    = \frac{ \scim{f}{3}{i} \, \scim{s}{3,2}{i} \, \scim{s}{2,1}{i} }{ \sqb{ 1 - \scim{s}{2,2}{i} } \sqb{ 1 - \scim{s}{3,3}{i} } }.
\end{equation}
Define
\begin{equation} \label{eq03.07}
    \sci{\xi}{ij}
    := \cR_0^{-1} \, \scim{d}{1}{i,j} \, \scim{f}{3}{j} \, \xi_j
    = \frac{ \scim{d}{1}{i,j} \, \scim{\cR}{0}{j} }{ \cR_0 - \scim{d}{1}{j,j} \, \scim{\cR}{0}{j} }
    = \frac{ \cR_0^{-1} \, \bN_{ij} }{ 1 - \cR_0^{-1} \, \bN_{jj} }
    \mymbox{for}
    i \ne j,
\end{equation}
where $\xi_j$ is as in \eqref{eq03.05} and we applied \eqref{eq03.06} and Proposition~\ref{thm03.01}.

Figure~\ref{fig03.03}(c) shows a reduced graph. It follows that ${\sci{\xi}{12}\,\sci{\xi}{21}=1}$, which is an implicit algebraic equation for $\cR_0$. Hence, using \eqref{eq03.07} and the facts ${\scim{d}{1}{2,1}=1-\scim{d}{1}{1,1}}$ and ${\scim{d}{1}{1,2}=1-\scim{d}{1}{2,2}}$, we obtain
\begin{equation} \label{eq03.08}
    \cR_0^2
    - \sqb{ \scim{d}{1}{1,1} \, \scim{\cR}{0}{1} + \scim{d}{1}{2,2} \, \scim{\cR}{0}{2} } \cR_0
    + \sqb{ \scim{d}{1}{1,1} + \scim{d}{1}{2,2} - 1 } \scim{\cR}{0}{1} \, \scim{\cR}{0}{2}
    = 0.
\end{equation}
Using the Quadratic Equation (and taking the $+$ root since $\cR_0$ is the largest eigenvalue in size) to solve \eqref{eq03.08}, the global net reproductive number can be written
\begin{equation} \label{eq03.09}
    \cR_0 = \shalf \setb{ \sqb{ \scim{d}{1}{1,1} \, \scim{\cR}{0}{1} + \scim{d}{1}{2,2} \, \scim{\cR}{0}{2} }
    + \sqrt{ \sqb{ \scim{d}{1}{1,1} \, \scim{\cR}{0}{1} + \scim{d}{1}{2,2} \, \scim{\cR}{0}{2} }^2 + 4 \sqb{ 1 - \scim{d}{1}{1,1} - \scim{d}{1}{2,2} } \scim{\cR}{0}{1} \, \scim{\cR}{0}{2} } }.
\end{equation}
Special cases are presented in Table~\ref{tab03.01}.

\begin{remark} \label{thm03.03}
Suppose, in this example, that ${\scim{s}{2,2}{i}=1=\scim{s}{3,3}{i}}$ so that the local demography matrices are Leslie. Now, if we wanted to compute the global growth rate $r$ instead of the net reproductive number $\cR_0$, all instances of ${\cR_0^{-1}\,\scim{f}{3}{i}}$ should be replaced with ${r^{-1}\,\scim{f}{3}{i}}$ and all instances of $\scim{s}{k+1,k}{i}$ should be replaced with ${r^{-1}\,\scim{s}{k+1,k}{i}}$. Clearly, ${\cR_0=r^3}$. In fact, it is not hard to see that if we were considering $m$ life stages and $n$ regions with local demography matrices being Leslie, only the last-stage individuals reproducing, and only the first-stage individuals dispersing, then ${\cR_0=r^m}$.
\end{remark}

\begin{table}[t]
\begin{center}
\begin{tabular}{|c|c|c|p{9cm}|}
    \hline
    $\bm{\scim{d}{1}{1,1}}$ & $\bm{\scim{d}{1}{2,2}}$ & $\bm{\cR_0}$ & \textbf{Interpretation of Dispersion During One Time Increment} \\ \hline\hline
    0 & 0 & ${ \cR_0 = \sqrt{ \scim{\cR}{0}{1} \, \scim{\cR}{0}{2} } }$ & all newborns disperse \\ \hline
    $\half$ & $\half$ & ${ \cR_0 = \shalf \sqb{ \scim{\cR}{0}{1} + \scim{\cR}{0}{2} } }$ & all newborns disperse or remain in the same region with equal probability \\ \hline
    1 & 1 & ${ \cR_0 = \max{}{ \scim{\cR}{0}{1}, \scim{\cR}{0}{2} } }$ & no dispersion \\ \hline
    0 & 1 & $\scim{\cR}{0}{2}$ & all newborns disperse from the first region to the second without dispersion in the reverse direction \\ \hline
\end{tabular}
\caption{Special cases for \eqref{eq03.09}, all of which are consistent with Proposition~\ref{thm03.01c}.} \label{tab03.01}
\end{center}
\end{table}

\paragraph{Three regions.}

Consider now a three-patch model in which each patch is of the form presented in Figure~\ref{fig03.02} and only the newborns disperse. See Figure~\ref{fig03.04}(a). By using the same transmission rates given in \eqref{eq03.07}, we can form the reduced graph given in Figure~\ref{fig03.04}(b). Further reduction and some algebra results in the final reduced graph Figure~\ref{fig03.04}(d) and the relationship
\begin{equation} \label{eq03.10}
    \sci{\xi}{12} \, \sci{\xi}{21} + \sci{\xi}{13} \, \sci{\xi}{31} + \sci{\xi}{23} \, \sci{\xi}{32}
    + \sci{\xi}{12} \, \sci{\xi}{23} \, \sci{\xi}{31}  + \sci{\xi}{13} \, \sci{\xi}{32} \, \sci{\xi}{21}
    = 1.
\end{equation}

\begin{figure}[t]
    \begin{center}
        \includegraphics[scale=0.75]{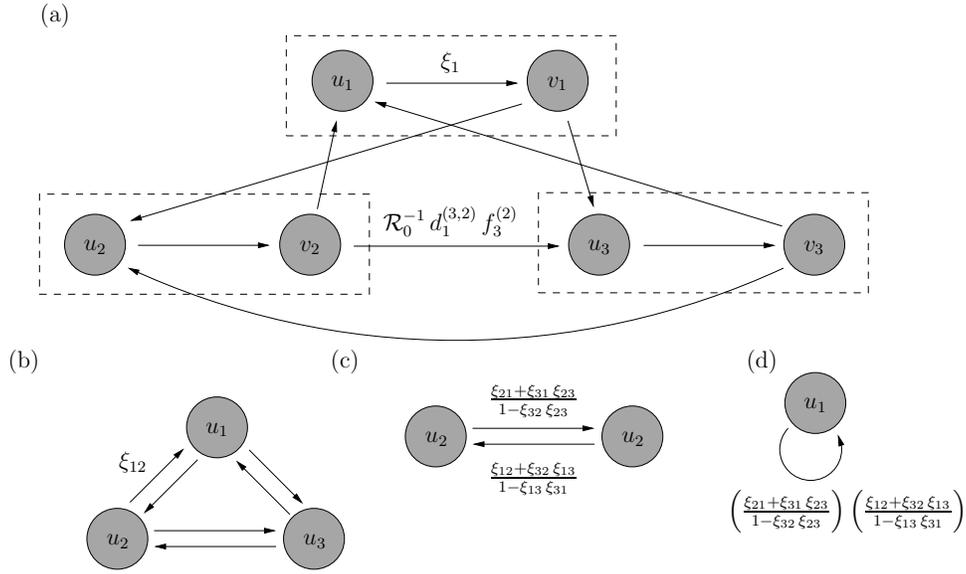}
        \caption{\textbf{(a)} The $z$-transformed life-cycle graph for a three-patch model where each patch is of the form in Figure~\ref{fig03.02}(a). Only a selection of transmission rates are included. \textbf{(b)} A reduced graph. Again, only a selection of transmission rates are included. \textbf{(c)} A further-reduced graph, formed using the first, second, and third Mason Equivalence Rules in Figure~\ref{figA.02}. \textbf{(d)} An even further-reduced graph, formed using the second Mason Equivalence Rule in Figure~\ref{figA.02}.}
        \label{fig03.04}
    \end{center}
\end{figure}

\begin{remark}
Alternatively, Equation~\eqref{eq03.10} can be obtained using the standard fact that the characteristic polynomial for a given graph can be written ${\detb{\mat{A}-\lambda\,\mat{I}}
=1+\sum_{k=1}^\infty\rb{-1}^kq_k}$, where $q_k$ is the sum of all products of loop transmissions of unordered $k$-tuples of disjoint loops (having no common nodes). For the graph in Figure~\ref{fig03.04}(b), there are five loops (namely those between each pair of nodes, three in total, plus the clockwise loop through all three nodes and the counter-clockwise loop through all three nodes) with transmission rates ${\ell_1:=\sci{\xi}{12}\,\sci{\xi}{21}}$, ${\ell_2:=\sci{\xi}{13}\,\sci{\xi}{31}}$, ${\ell_3:=\sci{\xi}{23}\,\sci{\xi}{32}}$, ${\ell_4:=\sci{\xi}{12}\,\sci{\xi}{23}\,\sci{\xi}{31}}$, and ${\ell_5:=\sci{\xi}{13}\,\sci{\xi}{32}\,\sci{\xi}{21}}$. Since there are no pairwise-disjoint loops, we see ${q_1=\sum_{k=1}^5\ell_k}$ and ${q_k=0}$ for ${k>1}$. Consequently, the characteristic equation can be written ${1+\sum_{k=1}^\infty\rb{-1}^kq_k=1+\sum_{k=1}^5\ell_k}$.
\end{remark}

\begin{remark}
Equation~\eqref{eq03.10} is decidedly more difficult to solve than the corresponding equation ${\xi_{12}\,\xi_{21}=1}$ for the two-region case. For a special case, consider ${\scim{d}{1}{i,i}=0}$ for ${i\in\setb{1,2,3}}$ and ${\scim{d}{1}{i,j}=\shalf}$ for ${i,j\in\setb{1,2,3}}$ with ${i \ne j}$. This corresponds to all newborns dispersing during one time increment with equal proportion to the two regions. Using these values and the relation \eqref{eq03.07}, after a little algebra \eqref{eq03.10} reduces to
\[
    \cR_0^3 - a \, \cR_0 - b = 0,
    \mymbox{where}
    a := \squarter \sqb{ \scim{\cR}{0}{1} \, \scim{\cR}{0}{2} + \scim{\cR}{0}{1} \, \scim{\cR}{0}{3} + \scim{\cR}{0}{2} \, \scim{\cR}{0}{3} }
    \mymbox{and}
    b := \squarter \sqb{ \scim{\cR}{0}{1} \, \scim{\cR}{0}{2} \, \scim{\cR}{0}{3} }.
\]
By virtue of Proposition~\ref{thm03.06}, stated below, ${\pderivslash{\cR_0}{\scim{\cR}{0}{i}}>0}$ for each ${i\in\setb{1,2,3}}$. Moreover, $\cR_0$ and ${a+b}$ are on the same side of $1$.
\end{remark}

\begin{proposition} \label{thm03.06}
Consider the function ${\scf{g}{u}:=u^3-au-b}$, where ${a,b>0}$ are parameters. There exists a unique positive root ${u^*>0}$ and this root satisfies ${\pderivslash{u^*}{a}>0}$ and ${\pderivslash{u^*}{b}>0}$. Moreover, any other real root ${v^*<0}$ satisfies ${u^*>-v}$. Finally, $u^*$ and ${a+b}$ are on the same side of 1.
\end{proposition}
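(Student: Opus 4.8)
The plan is to analyze the cubic $g(u)=u^3-au-b$ with $a,b>0$ directly. First I would establish the root structure: since $g(0)=-b<0$ and $g(u)\to+\infty$ as $u\to+\infty$, the Intermediate Value Theorem gives at least one positive root. For uniqueness of the positive root, observe that $g'(u)=3u^2-a$ has its only positive zero at $u=\sqrt{a/3}$, so $g$ is decreasing on $[0,\sqrt{a/3}]$ and increasing on $[\sqrt{a/3},\infty)$; combined with $g(0)<0$ this forces $g$ to be negative on all of $[0,\sqrt{a/3}]$ and then cross zero exactly once on $(\sqrt{a/3},\infty)$. Hence there is a unique positive root $u^*$, and moreover $g'(u^*)>0$ since $u^*>\sqrt{a/3}$.

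Next, for the monotonicity of $u^*$ in the parameters, I would use implicit differentiation on $g(u^*)=0$, i.e. $(u^*)^3-au^*-b=0$. Differentiating with respect to $a$ gives $g'(u^*)\,\partial u^*/\partial a - u^* = 0$, so $\partial u^*/\partial a = u^*/g'(u^*)>0$ because both $u^*>0$ and $g'(u^*)>0$. Similarly, differentiating with respect to $b$ gives $g'(u^*)\,\partial u^*/\partial b - 1 = 0$, so $\partial u^*/\partial b = 1/g'(u^*)>0$. For the claim about a negative real root $v^*$: if all three roots are real, write them as $u^*>0$ and $v^*,w^*$; the coefficient of $u^2$ in $g$ is zero, so the roots sum to zero, giving $u^* = -(v^*+w^*)$. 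Also the product of the roots is $b>0$, so $v^*w^*=b/u^*>0$, meaning $v^*$ and $w^*$ have the same sign, hence both are negative (their sum is $-u^*<0$). Then $-v^* < -v^*-w^* = u^*$ since $-w^*>0$, which is the desired inequality $u^*>-v^*$ (and symmetrically $u^*>-w^*$).

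Finally, for the ``same side of $1$'' statement, I would evaluate $g$ at $u=1$: $g(1) = 1-a-b$. If $a+b<1$ then $g(1)>0$; since $g$ is negative on $[0,\sqrt{a/3}]$ and increasing thereafter up through $u^*$ where it vanishes, and $g(1)>0$ forces $1$ to lie strictly to the right of the unique sign change, we get $u^*<1$. If $a+b>1$ then $g(1)<0$, so $1$ lies to the left of the crossing point (in the region where $g<0$), giving $u^*>1$; and if $a+b=1$ then $g(1)=0$ so $u^*=1$ by uniqueness. In all cases $u^*$ and $a+b$ lie on the same side of $1$. The one point requiring a little care is the argument that $g(1)>0$ actually places $1$ past the crossing rather than before it — this is handled by noting $g<0$ on the entire initial decreasing stretch $[0,\sqrt{a/3}]$, so any point where $g>0$ must exceed $u^*$; this is the only mild subtlety, and the rest is routine.
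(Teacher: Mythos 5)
Your proposal is correct, and for most of the statement it follows the same route the paper takes in its stated ``alternative'' argument: a sign/monotonicity analysis of $g$ (the paper invokes Descartes' Rule of Signs where you use $g'(u)=3u^2-a$ and the Intermediate Value Theorem, but both hinge on showing $3(u^*)^2-a>0$), then implicit differentiation to get $\partial u^*/\partial a=u^*/g'(u^*)>0$ and $\partial u^*/\partial b=1/g'(u^*)>0$, and finally the evaluation $g(1)=1-(a+b)$ together with the sign pattern of $g$ on $(0,\infty)$ for the ``same side of $1$'' claim. Where you genuinely diverge is the assertion that any other real root is negative and smaller in magnitude than $u^*$: the paper disposes of this by sketching $y=u^2$ against $y=a+b/u$ and reading the claim off the picture, whereas you use Vieta's formulas --- the roots sum to $0$ and multiply to $b>0$, so the remaining real roots (if any) satisfy $v^*+w^*=-u^*<0$ and $v^*w^*>0$, hence are both negative, and $-v^*<(-v^*)+(-w^*)=u^*$. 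Your version is more self-contained and rigorous on this point (the paper's graphical argument is only a heuristic as written), and it also correctly notes that complex roots of a real cubic come in conjugate pairs, so the case split is exhaustive. The paper's picture, on the other hand, gives quicker intuition for why the positive intersection dominates. Either way, the proof stands.
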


\paragraph{Arbitrary number of regions.}

We end this example with a reduced graph, Figure~\ref{fig03.05}(a), of an $n$-patch model in which each patch is of the form in Figure~\ref{fig03.02} and only newborns disperse. To form the reduced graph, start with the relation ${\specrad{\cR_0^{-1}\,\bmat{N}}=1}$. The $z$-transformed graph is the graph formed by connecting node $j$ to node $i$ with directed arrow of magnitude ${\cR_0^{-1}\,\bN_{ij}}$ when ${\bN_{ij} \ne 0}$. Appealing to the third Mason Equivalence Rule in Figure~\ref{figA.02} and relation \eqref{eq03.07}, we can remove all self-loops and replace each ${\cR_0^{-1}\,\bN_{ij}}$ for ${i \ne j}$ with $\xi_{ij}$. The ${n=2}$ and ${n=3}$ versions appeared, respectively, in Figures~\ref{fig03.03}(b) and \ref{fig03.04}(b).

\begin{figure}[t]
    \begin{center}
        \includegraphics[scale=0.75]{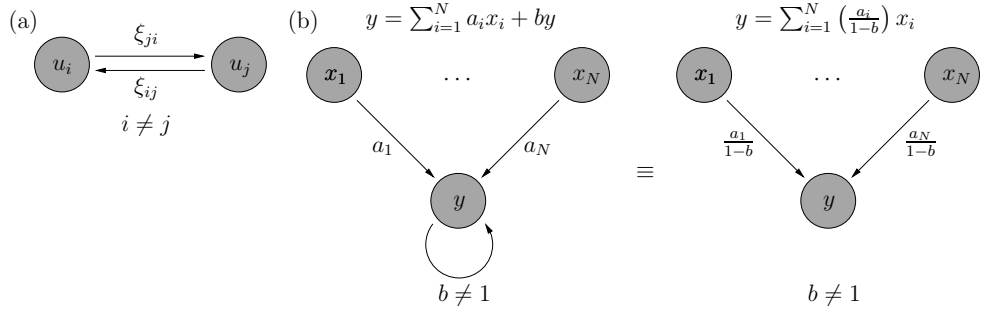}
        \caption{\textbf{(a)} The $z$-transformed life-cycle graph for an $n$-patch model where each patch is of the form in Figure~\ref{fig03.02}(a). There are no self-loops and each pair of distinct nodes $i$ and $j$ are connected only by the transmission coefficients $\xi_{ij}$ (from $j$ to $i$) and $\xi_{ji}$ (from $i$ to $j$). \textbf{(b)} A rule for obtaining the graph in (a). It is easy to verify either algebraically or by using the third Mason Equivalence Rule in Figure~\ref{figA.02}.}
        \label{fig03.05}
    \end{center}
\end{figure}

\subsection{Example \arabic{egnum}} \label{sec03.03}
\addtocounter{egnum}{1}

\paragraph{Motivation.}

Recall the main goal of this paper, which is formalized in Remark~\ref{thm02.06}. Can we find an example in which the growth rates of individual regions in the absence of dispersion are less than unity, that is ${\max{i\in\cE_n}{\scim{\cR}{0}{i}}<1}$, but the growth rate of the entire system with dispersion is greater than unity, that is ${\cR_0>1}$? Consider two regions, with the first region having high fecundities and low survival probabilities and the second region having low fecundities and high survival probabilities. Furthermore, suppose newborns disperse from the first region to the second (where they will have a better chance of survival) and adults disperse from the second region to the first (where they will have a higher reproductive output).

\paragraph{Setup.}

For a specific example, suppose ${\rb{m,n}=\rb{2,2}}$ (two stages, two patches). Here, only the adults reproduce and individuals cannot remain in the same stage after one time increment. That is, ${\scim{f}{1}{i}:=0}$, ${\scim{s}{1,1}{i}:=0}$, and ${\scim{s}{2,2}{i}:=0}$ for ${i\in\setb{1,2}}$. Moreover, assume that both regions have the same local dispersion-free net reproductive numbers of ${\scim{\cR}{0}{i}:=R}$, where ${0<R<1}$ and ${i\in\setb{1,2}}$. Furthermore, assume that the newborn-to-adult survival probability is greater in the second region than the first, say ${\scim{s}{2,1}{1}:=s}$ and ${\scim{s}{2,1}{2}:=ps}$, where ${0<s<1}$ and ${1<p<\fracslash{1}{s}}$. By virtue of \eqref{eq02.12}, the fecundities can be written ${\scim{f}{2}{1}=\fracslash{R}{s}}$ and ${\scim{f}{2}{2}=\fracslash{R}{ps}}$. Moreover, appealing to Remark~\ref{thm03.03} we know that the global growth rate and the local dispersion-free growth rates are given by ${r=\sqrt{\cR_0}}$ and ${\scm{r}{i}=\sqrt{R}}$ for ${i\in\setb{1,2}}$, where $\cR_0$ is the global net reproductive number. Finally, we assume the dispersion rate for the newborns from the first region to the second region is the same as the dispersion rate for the adults from the second region to the first region, with the common rate being ${d\in\sqb{0,1}}$.

\paragraph{Matrices.}

The local demography matrices and dispersion matrices for pairs of regions can be taken to be
\[
    \matm{A}{1} := \twobytwomatrix{0}{\frac{R}{s}}{s}{0},
    \quad
    \matm{A}{2} := \twobytwomatrix{0}{\frac{R}{ps}}{ps}{0},
    \quad
    \matm{D}{2,1} := \twobytwomatrix{d}{0}{0}{0},
    \mymbox{and}
    \matm{D}{1,2} := \twobytwomatrix{0}{0}{0}{d}.
\]
The dispersion matrices for remaining within the same region are ${\matm{D}{1,1}=\mat{I}-\matm{D}{2,1}=\diag{1-d,1}}$ and ${\matm{D}{2,2}=\mat{I}-\matm{D}{1,2}=\diag{1,1-d}}$. This leaves us with, respectively, the global projection, demography, and dispersion matrices
\[
    \mat{P} = \mat{D} \, \mat{A},
    \quad
    \mat{A} = \twobytwomatrix{ \matm{A}{1} }{ \mat{0} }{ \mat{0} }{ \matm{A}{2} },
    \mymbox{and}
    \mat{D} = \twobytwomatrix{ \matm{D}{1,1} }{ \matm{D}{1,2} }{ \matm{D}{2,1} }{ \matm{D}{2,2} }.
\]
Routine computations show the next-generation submatrix and partial next-generation submatrix to be
\[
    \bmat{N} = \twobytwomatrix{\rb{1-d}R}{pd\rb{1-d}R}{dR}{\sqb{pd^2+\rb{1-d}}R}
    \mymbox{and}
    \bmat{W} = \twobytwomatrix{R}{pdR}{0}{\rb{1-d}R}.
\]

\paragraph{Analysis.}

\begin{figure}[t]
    \centering
    \parbox{0.4\linewidth}{\includegraphics[width=\figwid]{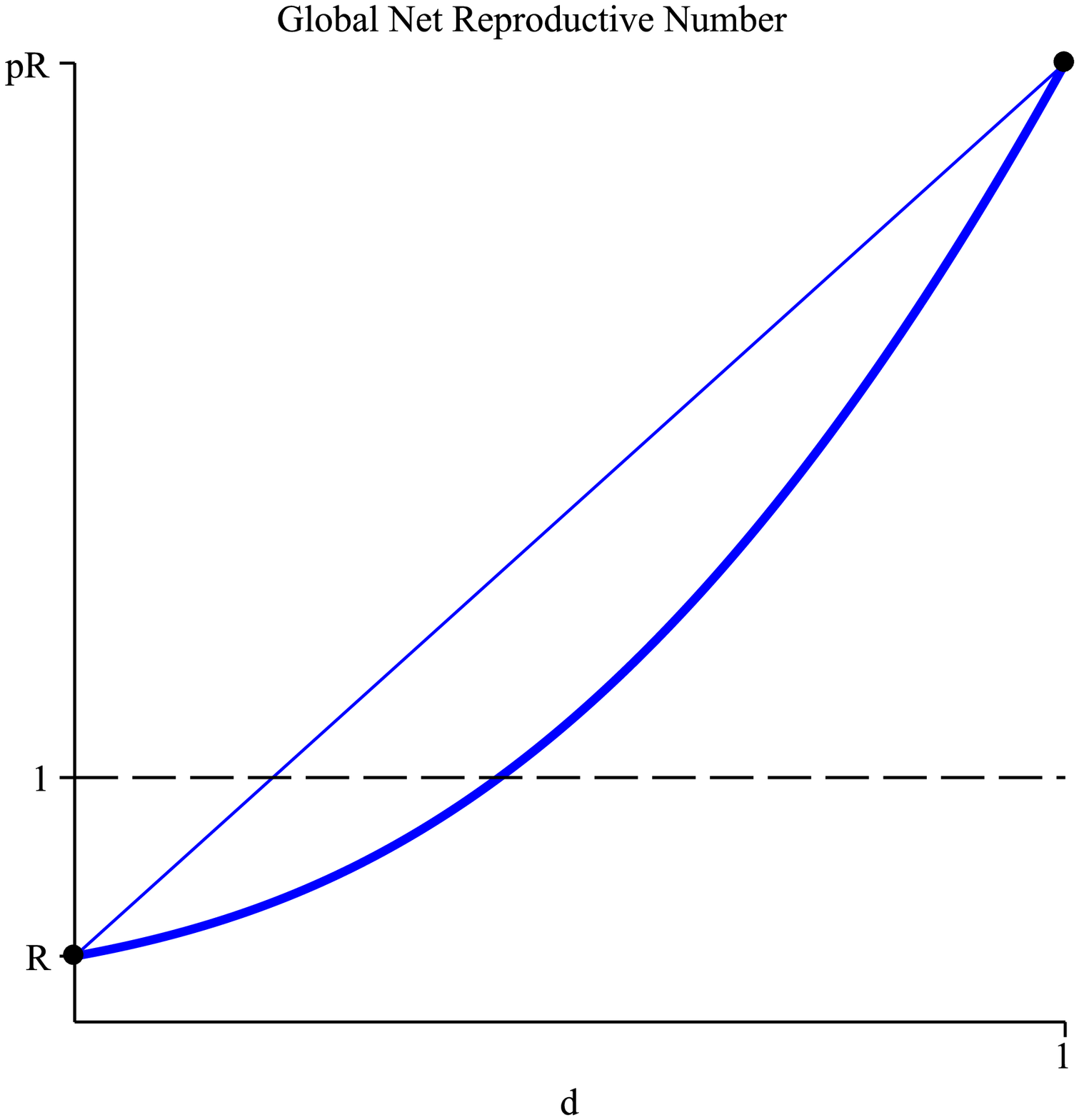}}
    \quad
    \begin{minipage}{0.4\linewidth}
        \includegraphics[width=\figwid]{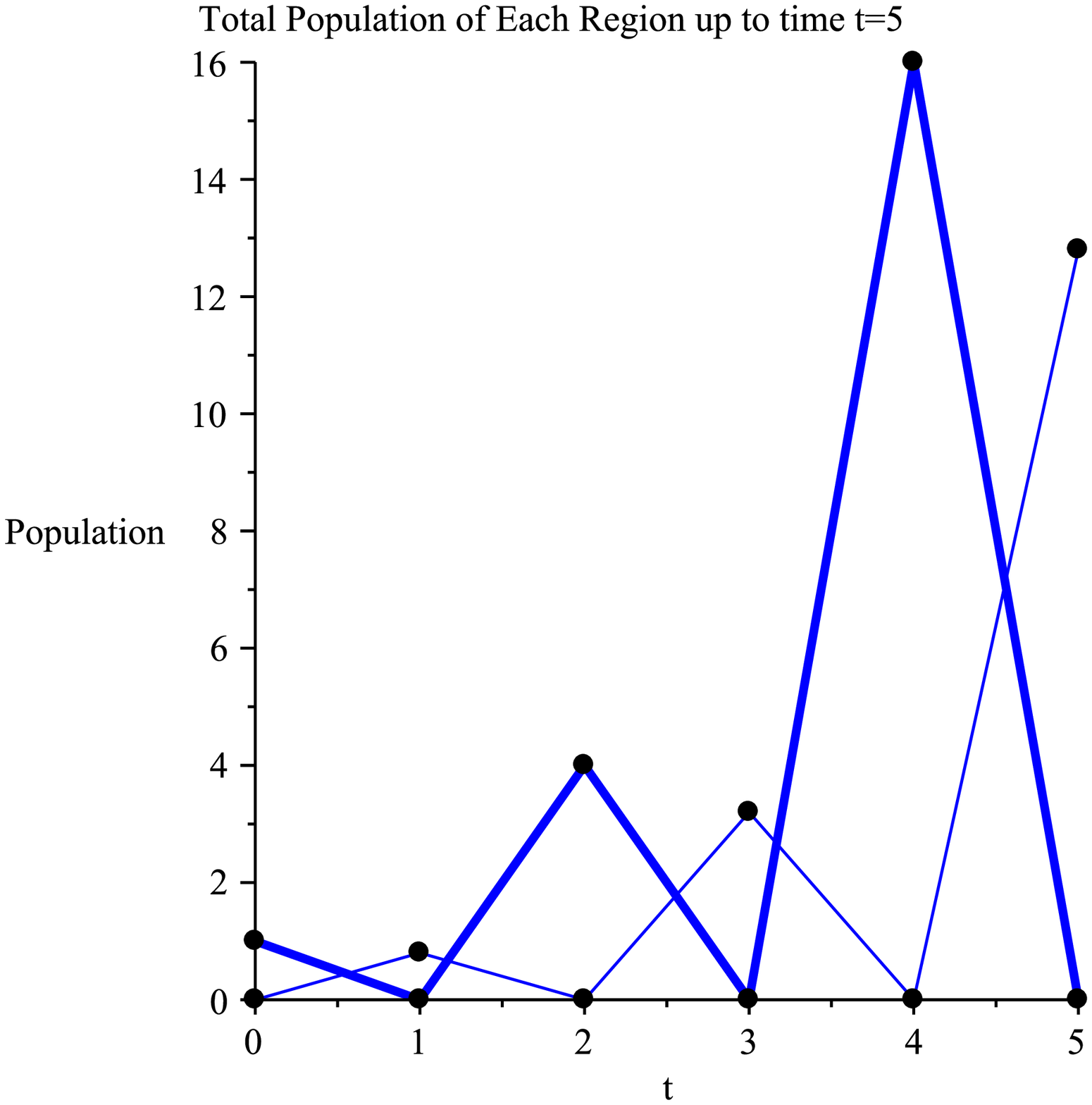}
    \end{minipage}
    \caption{Consider the example in \S\ref{sec03.03}. For the left figure, both global net reproductive numbers are given, with $\cR_0$ being the thick curve and $\hcR_0$ being the thin curve. For the right figure and the specific parameter values, the total populations for each region, namely ${\norm{\vecmf{x}{1}{t}}}$ and ${\norm{\vecmf{x}{2}{t}}}$, are given after a single newborn is placed in region 2, with region 1 being the thin curve and region 2 being the thick curve.}
    \label{fig03.06}
\end{figure}

Using Theorem~\ref{thm02.11}, we obtain the two net reproductive numbers
\[
    \scif{\cR}{0}{d} = \shalf \sqb{ 2 \rb{ 1 - d } + p \, d^2 + \sqrt{p} \, d \sqrt{ 4 \rb{ 1 - d } + p \, d^2 } } R
    \mymbox{and}
    \scif{\hcR}{0}{d} = \sqb{ \rb{ p - 1 } d + 1 } R,
\]
where we treat $d$ as the parameter of interest. From this we can glean the facts ${\scif{\cR}{0}{0}=R=\scif{\hcR}{0}{0}}$ and ${\scif{\cR}{0}{1}=pR=\scif{\hcR}{0}{1}}$.
Moreover, ${\scif{\cR'}{0}{0}=\rb{\sqrt{p}-1}R>0}$ and ${\scif{\hcR'}{0}{d}=\rb{p-1}R>0}$ for all ${d\in\sqb{0,1}}$. More tedious calculations will confirm that ${\scif{\cR'}{0}{d}>0}$ (and also ${\pderivslash{\scif{\cR}{0}{d}}{p}>0}$) for all ${d\in\sqb{0,1}}$ (and ${p>1}$). Note that the critical dispersion rate (when ${\scif{\cR}{0}{d^*}=1}$) is given by ${d^*:=\rb{1+\sqrt{\fracslash{p}{R}}}\rb{1-R}\rb{p-R}^{-1}}$.

\paragraph{Numerical example.}

Specific numerical values are also interesting. Choose ${R:=\squarter}$, ${s:=\sfrac{1}{20}}$, and ${p:=16}$, and ${d:=1}$. The critical dispersion rate is ${d^*=\sfrac{3}{7}}$. With these values, the local dispersion-free net reproductive numbers and growth rates are, respectively, ${\scim{\cR}{0}{i}=R=\squarter}$ and ${\scm{r}{i}=\sqrt{R}=\shalf}$ for ${i\in\setb{1,2}}$. However, the global net reproductive numbers and global growth rate for the entire system with dispersion are, respectively, ${\cR_0=4}$, ${\hcR_0=4}$, and ${r=\sqrt{\cR_0}=2}$. That is, ${\max{}{\scim{\cR}{0}{1},\scim{\cR}{0}{2}}<1<\cR_0=\hcR_0}$ and ${\max{}{\scm{r}{1},\scm{r}{2}}<1<r}$. So, the populations would go extinct if the regions were isolated yet they flourish if there is sufficient dispersion between the regions. See Figure~\ref{fig03.06}. Note that ${\cR_0=\norm{\mat{N}\,\vecxi}=\norm{\bmat{N}\,\veczeta}=\hcR_0}$, where ${\vecxi:=\rb{0,0,1,0}}$ and ${\veczeta:=\rb{0,1}}$ (see Theorem~\ref{thm02.13b} and its proof).

\begin{remark}
Suppose the dispersion matrices $\matm{D}{1,2}$ and $\matm{D}{2,1}$ are swapped. That is, newborns move from the second region to the first (thereby lowering their chances of survival) and the adults move from the first region to the second (thereby lowering their fecundity). It can be shown that ${\pderivslash{\cR_0}{d}<0}$ for each ${d\in\sqb{0,1}}$ with ${\cR_0=R}$ when ${d=0}$ and ${\cR_0=\fracslash{R}{p}}$ when ${d=1}$. Also, ${\hcR_0=R}$ for each ${d\in\sqb{0,1}}$.
\end{remark}

\section{Application to the Round Goby} \label{sec04}

The round goby fish, \emph{Neogobius melanostomus}, is an invasive species believed to have originated in ballast water from Eastern Europe and Western Asia that was first detected in St.~Clair River in 1990 and later known to be present in all of the Great Lakes by 2000. Since the initial introduction, the round goby has dispersed to and become well-established in many regions of the Great Lakes and surrounding tributaries. An unfortunate consequence is the decline or displacement of many native species such as the mottled sculpin, sturgeon, and trout and, interestingly, other invasive species such as zebra mussels and quagga mussels. Moreover, the goby consumes zebra mussels which negatively affect clams, crayfish, snails, and turtles. Since humans eat fish (such as smallmouth bass) which consume the round goby which eat zebra mussels which ingest toxic polychlorinated biphenyls (PCBs), the round goby fish may negatively affect human health. See, for example, \cite{Velez-EspinoKoopsBalshine2010}. The success of the round goby has been attributed to its high tolerance for a wide range of environments, diverse diet, ability to spawn repeatedly throughout the spring and summer, the aggressive protection of the eggs by the male parents, and large size compared with other benthic species of similar lifestyle \cite{CharleboisCorkumJudeKnight2001}. The Government of Ontario, like the governments of other jurisdictions, very much wants to know how to deal with the round goby---not to mention other invasive species---or, in the very least, wants to have detailed projections so as to adequately prepare for the future.

\paragraph{Considerations.}

When modeling some population, or any biological, chemical, or physical process, one must selectively choose quintessential properties and incorporate them in an appropriate way so that the resulting model both yields realistic insight and is mathematically tractable. The standard reference for quantitative analysis of fisheries is \cite{QuinnDeriso1999}. Regarding the round goby on a large scale, there are a number of essential properties which we have taken into account.

\paragraph{Fecundity.}

Sampling is used to determine the fecundity and reproductive season of the round goby. The larval stage lasts approximately three weeks and the females are sexually mature at about one year of age with spawning occurring multiple times during the spring and summer. The success of the round goby at invading the Great Lakes is commonly attributed to their high fecundity (compared to native species), extended spawning season, rapid maturation, and aggressive behaviour. In particular, the adult male gobies aggressively protect the nests \cite{MeunierYavnoAhmedCorkum2009,YavnoCorkum2011}. Data on age-length and age-fecundity relationships can be found in \cite{MacInnisCorkum2000a,MacInnisCorkum2000b,Velez-EspinoKoopsBalshine2010}.

Mathematically, these facts suggest defining a \emph{larvae stage} as individuals between zero and three weeks of age, a \emph{juvenile stage} as individuals between three weeks and one year of age, and an \emph{adult stage} as individuals aged more than one year. Moreover, adults are the only class with nonzero fecundities. The time increment, for simplicity, can be taken to be one month with six months of reproduction (April through September) and six months of no reproduction (October through March) with the larvae remaining larvae for just one time increment. Note that The accounting for seasonal changes in vital rates, for example the fecundities of the adults which are zero for non reproductive months (October through March) and positive for the reproductive months (April through September), can be achieved using periodic matrix population models \cite{CushingAckleh2012}. In regions where gobies are established, the adults tend have higher proportions of surviving offspring (fecundity).

\paragraph{Survival.}

It is estimated that round gobies have a typical lifespan of four years in the Great Lakes \cite{Velez-EspinoKoopsBalshine2010}. The juvenile gobies are known to be predatory and cannibalistic, whereby they feed on the eggs of gobies and other species. Mathematically, these facts suggest that the survival probabilities tend to be higher in regions where gobies have recently invaded.

\paragraph{Dispersion.}

The movements of round gobies can be tracked using transponder tags \cite{CookinghamRuetz2008}. Round gobies are known to have a high site fidelity but juveniles tend to disperse more rapidly than adults with larger round gobies tending to induce smaller fish to leave. Furthermore, the round goby inhabits a variety of distinct environments \cite{JohnsonAllenCorkumLee2005,RayCorkum2001}. Finally, larvae tend to migrate vertically and then disperse via water currents \cite{HaydenMiner2009,HenslerJude2007}. The large-scale dispersion is predominantly due to the larvae.

A matrix population model with the larvae having larger dispersion coefficients than the juveniles and the adults having no dispersion is appropriate. The management of aquatic populations in which dispersion is involved using matrix population models has been explored by others, for example, in \cite{KanaryLockeWatmoughChasseBourqueNadeau2011}. A small-scale model using integro-difference equations \cite{NeubertCaswell2000,RobertsonCushing2011,RobertsonCushing2012} would also be reasonable to account for the dispersion of the juveniles.

\paragraph{Linear model.}

We will formulate a matrix population model of the form explored in \S\ref{sec02} for the round goby. Nonlinear (density-dependent) and non-autonomous (time-dependent) effects are ignored. The three stages (${m=3}$) are taken to be larvae ${k=1}$, juvenile ${k=2}$, and adult ${k=3}$. For simplicity, take one time increment to represent one month so that all larvae become juveniles. With $n$ discrete patches, the local demography matrices are the same as in \eqref{eq03.02}. For the local dispersion matrices between two regions,
\[
    \matm{D}{i,j} := \diag{\scim{d}{1}{i,j},\scim{d}{2}{i,j},0}
    \mymbox{for}
    i, j \in \cE_n
    \mymbox{with}
    i \ne j,
\]
where ${0 \leq \scim{d}{k}{i,j} \leq 1}$ for each ${k\in\setb{1,2}}$ and ${i,j\in\cE_n}$ with ${i \ne j}$ and ${\sum_{i=1}^n\scim{d}{k}{i,j}=1}$ for each ${k\in\setb{1,2}}$ and ${j\in\cE_n}$. The other dispersion matrices $\matm{D}{j,j}$ for ${j\in\cE_n}$, for remaining within the same region, are obtained using the relation ${\sum_{i=1}^n\matm{D}{i,j}=\mat{I}}$. Typically, the larvae disperse more than the juveniles and so ${\scim{d}{1}{i,j}\geq\scim{d}{2}{i,j}}$ for each ${i,j\in\cE_n}$ with ${i \ne j}$. Together, the matrix population model is
\begin{equation} \label{eq04.01}
    \vecf{x}{t+1} = \mat{P} \, \vecf{x}{t},
    \quad
    \vecf{x}{0} = \vec{x}_0,
\end{equation}
where ${t\in\Nd{}_0}$, ${\vec{x}_0\in\Rdnn{3n}}$, ${\mat{P}:=\mat{D}\,\mat{A}}$, ${\mat{A}:=\bigoplus_{i=1}^n\matm{A}{i}}$, ${\mati{D}{ij}:=\matm{D}{i,j}}$ for ${i,j\in\cE_n}$. Note that $\mati{D}{ij}$ denotes the $\rb{i,j}^\text{th}$ block of matrix $\mat{D}$.

\paragraph{Two-patch example.}

This example is a special case of \eqref{eq04.01} and takes the example in \S\ref{sec03.03} one step further. Start with the local demography matrices ${\matm{A}{i}:=\matm{F}{i}+\matm{S}{i}}$ for each ${i\in\setb{1,2}}$, where $\matm{F}{i}$ and $\matm{S}{i}$ are of the form in \eqref{eq03.02}. Assume that ${\matm{S}{2}=p\,\matm{S}{1}}$, where ${p>1}$, and ignore the superscripts on the survival probabilities. The standard assumptions on the survival matrices apply, namely ${\matm{S}{1},\matm{S}{2}\geq\mat{0}}$ and ${\norm{\matm{S}{1}},\norm{\matm{S}{2}}<1}$, but we need to further assume that ${\sci{s}{2,1},\sci{s}{3,2}>0}$. Observe that
\[
    \norm{ \matm{S}{1} } = \max{}{ \sci{s}{2,1}, \sci{s}{2,2} + \sci{s}{3,2}, \sci{s}{3,3} } > 0,
    \quad
    \norm{ \matm{S}{2} } = p \norm{ \matm{S}{1} },
    \mymbox{and}
    p < \norm{ \matm{S}{1} }^{-1}.
\]
Now, take ${R\in\rb{0,1}}$. We want to choose the fecundities $\scim{f}{3}{1}$ and $\scim{f}{3}{2}$ so that the local dispersion-free net reproductive numbers are ${\scim{\cR}{0}{1}=R}$ and ${\scim{\cR}{0}{2}=R}$. This requires
\[
    \scim{f}{3}{1} := \frac{R\rb{1-\sci{s}{2,2}}\rb{1-\sci{s}{3,3}}}{\sci{s}{2,1}\,\sci{s}{3,2}}
    \mymbox{and}
    \scim{f}{3}{2} := \frac{R\rb{1-p\,\sci{s}{2,2}}\rb{1-p\,\sci{s}{3,3}}}{p^2\,\sci{s}{2,1}\,\sci{s}{3,2}}.
\]
For the local dispersion matrices,
\[
    \matm{D}{2,1} := \diag{d,0,0}
    \mymbox{and}
    \matm{D}{1,2} := \diag{0,dq,0},
    \mymbox{where}
    0 \leq d, q \leq 1.
\]
Biologically, during one time increment, a fraction of the larvae disperse from the first region to the second (when ${0<d<1}$), a smaller fraction of juveniles disperse from the second region to the first (when ${0<d<1}$ and ${0<q<1}$), larvae always advance to become juveniles, and only adults reproduce. Both regions have the same local dispersion-free net reproductive number of ${\scim{\cR}{0}{i}=R}$ for ${i\in\setb{1,2}}$, with the first region being better for reproduction and the second region being better for survival.

Explicit expressions for $\scif{\cR}{0}{d}$ and $\scif{\hcR}{0}{d}$, while somewhat messy and omitted here for that very reason, can be computed. However, we will present the maximum values of $\scif{\cR}{0}{d}$ and $\scif{\hcR}{0}{d}$,
\[
    \scif{\cR}{0}{1}
    = \xi \, R
    = \scif{\hcR}{0}{1},
    \mymbox{where}
    \xi := \frac{ p q + \rb{ 1 - q } \rb{ 1 - p \, \sci{s}{2,2} } }{ 1 - p \rb{ 1 - q } \sci{s}{2,2} }.
\]
Observe that ${\xi=p}$ when ${q=1}$. Figure~\ref{fig04.01} depicts how $\scif{\cR}{0}{d}$ and $\scif{\hcR}{0}{d}$ vary with $d$. Also shown, for two cases of $d$ (one below the critical value and one above), is the total population in each of the two regions when a single newborn is placed in the second region for the parameter values ${R:=\shalf}$, ${\sci{s}{2,1}:=\sfrac{1}{20}}$, ${\sci{s}{2,2}:=\sfrac{1}{10}}$, ${\sci{s}{3,2}:=\sfrac{1}{20}}$, ${\sci{s}{3,3}:=\sfrac{3}{20}}$, ${p:=5}$, and ${q:=\sthird}$. The critical dispersion rate, where ${\scif{\cR}{0}{d^*}=1}$, is ${d^* \approx 0.632}$.

\begin{figure}[t]
    \centering
    \parbox{0.3\linewidth}{\includegraphics[width=\figwid]{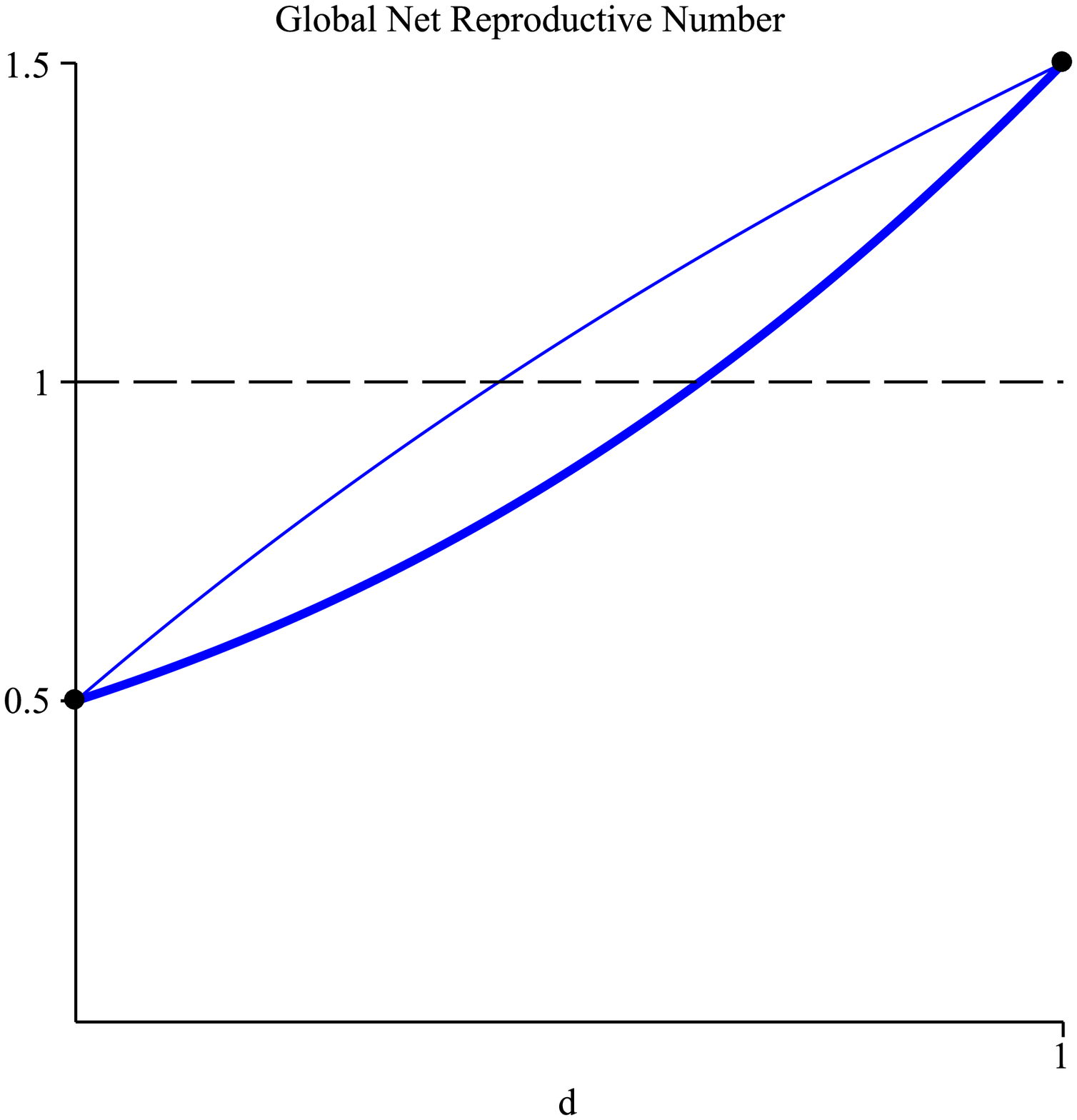}}
    \quad
    \begin{minipage}{0.3\linewidth}
        \includegraphics[width=\figwid]{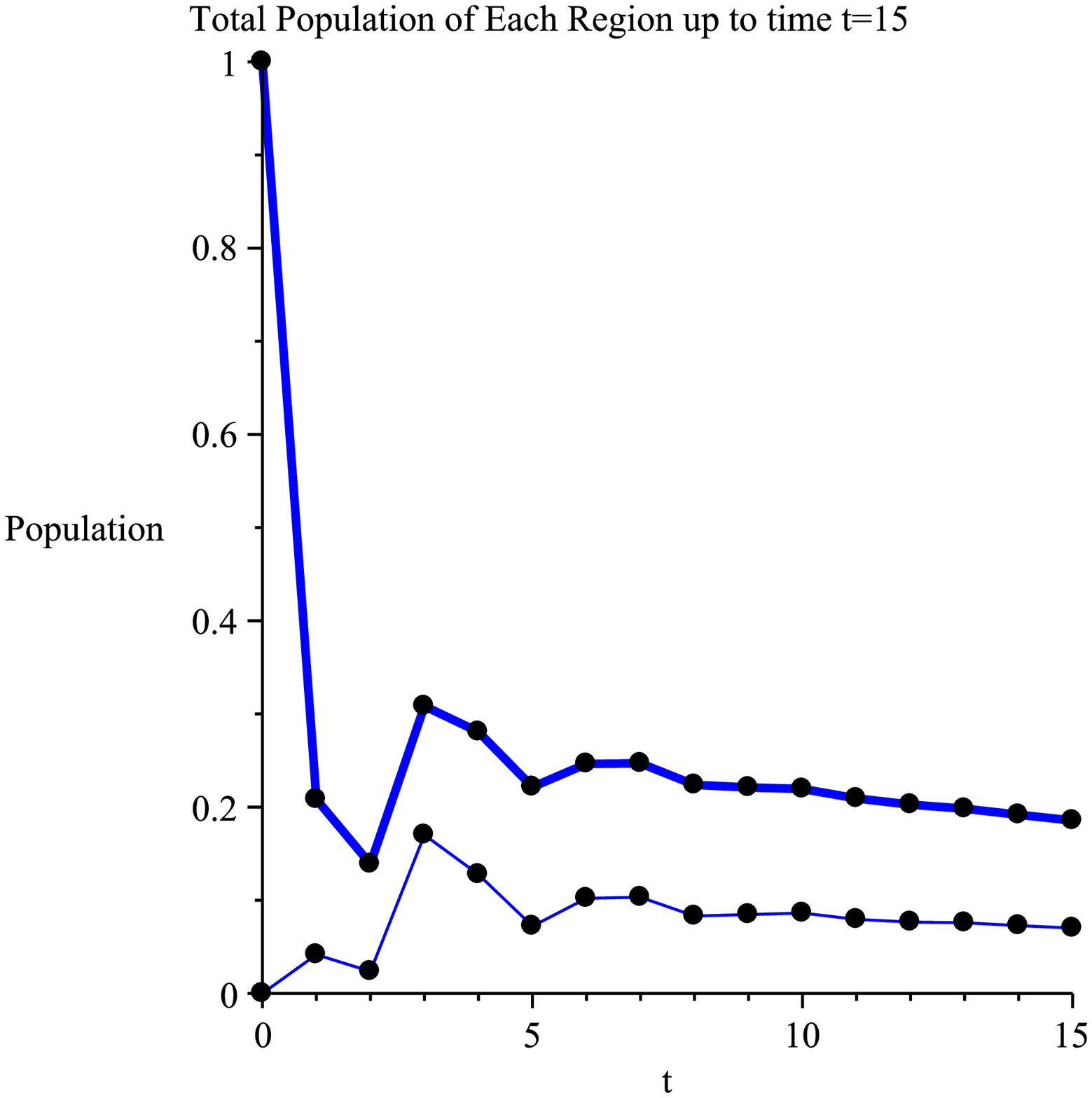}
    \end{minipage}
    \quad
    \begin{minipage}{0.3\linewidth}
        \includegraphics[width=\figwid]{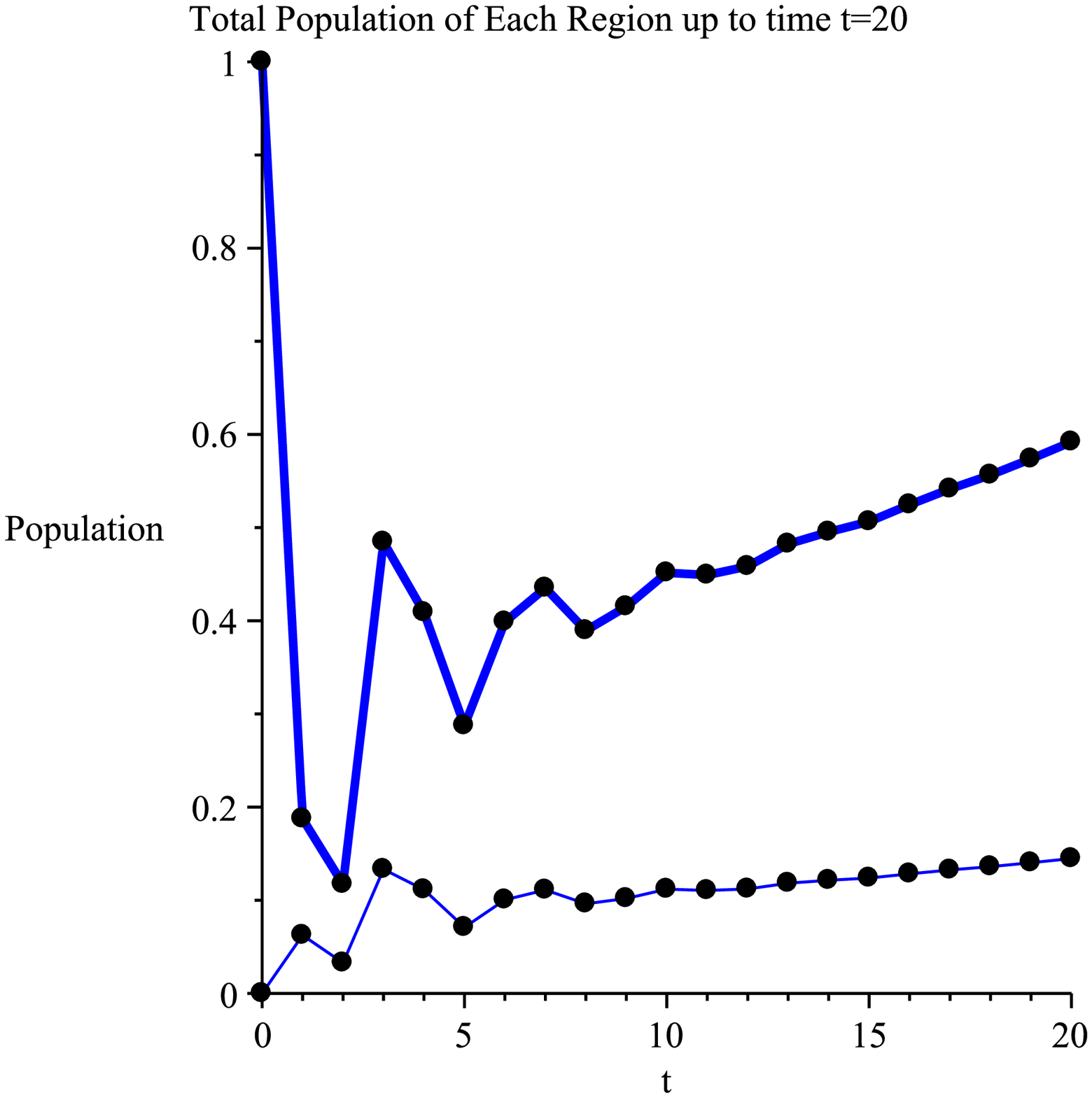}
    \end{minipage}
    \caption{Consider the numerical, two-patch example from \S\ref{sec04}. For the left figure, both global net reproductive numbers are given, with $\cR_0$ being the thick curve and $\hcR_0$ being the thin curve. For the centre figure with the specific parameter values and ${d:=\half}$, which results in ${\cR_0 \approx 0.861}$ and ${\hcR_0 \approx 1.071}$, the total populations for each region are given after a single newborn is placed in region 2, with region 1 being the thin curve and region 2 being the thick curve. For the right figure, everything is the same as the centre figure except ${d:=\frac{3}{4}}$, ${\cR_0 \approx 1.142}$, and ${\hcR_0=1.3}$.}
    \label{fig04.01}
\end{figure}

\paragraph{Discussion.}

The larvae disperse (via currents mainly) to new regions where the goby has not established and is virtually unopposed by the native species and thus has a higher chance of survival. Similarly, the juveniles are free to disperse back to other regions where the goby has already established and thus has a higher fecundity as adults. Based on the above model and analysis, we can conjecture that the combined dispersion of the larvae and juveniles amplifies the goby population. More, if the dispersion of the larvae or juveniles were to be inhibited then the amplification would be diminished or eliminated. To be sure, reduction of the other vital rates (fecundity and survival) would lower the local dispersion-free net reproductive numbers and the global net reproductive number.

\section{Conclusion and Future Work} \label{sec05}

We have presented and analyzed a matrix population model that combines multiple patches, with each region having Usher demography matrices and dispersion between the patches. Our analysis was aided by graph reduction and submatrices formed by considering only rows and columns for newborns. Biologically-relevant examples were provided, ones that were applicable to the invasive round goby fish. Notably, we showed that ``round-trip dispersion'' (which applies to the goby) can amplify the overall growth rate of a population whereas the absence of round-trip dispersion precludes such amplification.

To conclude, we state a couple of interesting questions. First, for an otherwise fixed setup, what dispersion matrix $\mat{D}$ maximizes and minimizes $\cR_0$? Second, under what conditions is it true that ${\max{i\in\cE_n}{\scim{\cR}{0}{i}}<\cR_0}$?

\appendix

\makeatletter
\def\@seccntformat#1{\csname Pref@#1\endcsname \csname the#1\endcsname\quad}
\def\Pref@section{Appendix~}
\makeatother

\section{Background Material} \label{appA}

\subsection{Common Notation, Terminology, and Results} \label{appA.01}

\paragraph{Nonnegative vectors and matrices.}

The set $\Rmnn$ consists of all $m$-dimensional (column) vectors with nonnegative components. Similarly, the set $\matRmmnn$ consists of $m$ by $m$ matrices with nonnegative elements. If dimension is clear, we can also write ${\vec{x}\geq\vec{0}}$ and ${\mat{B}\geq\mat{0}}$ to indicate that each component of $\vec{x}$ and each element of $\mat{B}$ is nonnegative. (A strict inequality says all elements are strictly positive.) Note that ${\mat{B}\geq\mat{0}}$ does \emph{not} indicate here that $\mat{B}$ is positive semi-definite.

\paragraph{Norms.}

The vector norm we use is the $L_1$-norm, since it outputs total population for a population vector $\vecf{x}{t}$. Specifically, the vector norm and its induced matrix norm are, for ${\vec{x}\in\Rm}$ and ${\mat{B}\in\matRmm}$,
\begin{equation} \label{eqA.01}
    \norm{ \vec{x} } := \sum_{k=1}^m \abs{ x_k }
    \mymbox{and}
    \norm{\mat{B}} := \max{\norm{\vec{x}}=1}{ \norm{\mat{B}\,\vec{x}} } = \max{\ell\in\cE_m}{ \sum_{k=1}^m \abs{B_{k\ell}} }.
\end{equation}

\paragraph{Irreducible and primitive matrices.}

Important subclasses of nonnegative matrices are the \emph{irreducible} and \emph{primitive} matrices. These terms have more formal and intuitive definitions using linear algebra and graph theory, but the simplest computational necessary and sufficient conditions for irreducibility and primitivity are, respectively, ${\rb{\mat{I}+\mat{B}}^{m-1}>\mat{0}}$ and ${\mat{B}^{\rb{m-1}^2+2}>\mat{0}}$, where ${B_{k\ell}:=\sgn{A_{k\ell}}}$.

\paragraph{Eigenvalues and eigenvectors.}

For a general square matrix ${\mat{A}\in\matCmm}$, if ${\lambda\in\Cd{}}$ and ${\vec{u},\vec{v}\in\Cmnz}$ satisfy ${\vec{u}^*\,\mat{A}=\lambda\,\vec{u}^*}$ and ${\mat{A}\,\vec{v}=\lambda\,\vec{v}}$, then $\lambda$ is an \emph{eigenvalue}, $\vec{u}$ is an \emph{associated left eigenvector}, and $\vec{v}$ is an \emph{associated right eigenvector}. Note $*$ denotes transposition and ${\Cmnz:=\setst{\vec{z}\in\Cm}{\vec{z}\ne\vec{0}}}$. The set of all eigenvalues of $\mat{A}$ is called the \emph{spectrum} and denoted $\spectrum{\mat{A}}$. The eigenvalues of $\mat{A}$ are found as the $n$ roots (not necessarily distinct) of the \emph{characteristic polynomial} ${\scf{p}{\lambda}:=\detb{\lambda\,\mat{I}-\mat{A}}}$. Simple-but-useful properties follow from the observations ${\mat{A}^*\,\vec{u}=\lambda\,\vec{u}}$, ${\spectrum{\mat{A}^*}=\spectrum{\mat{A}}}$, ${\mat{A}\rb{\alpha\,\vec{v}}=\lambda\rb{\alpha\,\vec{v}}}$, and ${\rb{\alpha\,\mat{A}}\vec{v}=\rb{\alpha\,\lambda}\vec{v}}$, where ${\alpha \ne 0}$.

\paragraph{Spectral radius.}

Also important is the \emph{spectral radius} of $\mat{B}$, given by ${\specrad{\mat{B}}:=\maxst{\abs{\lambda}}{\lambda\in\spectrum{\mat{B}}}}$. The spectral radius satisfies ${\specrad{\alpha\,\mat{B}}=\abs{\alpha}\specrad{\mat{B}}}$, for any ${\alpha\in\Cd{}}$, and the inequalities of Frobenius
\begin{equation} \label{eqA.02}
    \min{ \ell \in \cE_m }{ \sum_{k=1}^m \abs{B_{k\ell}} }
    \leq \specrad{\mat{B}}
    \leq \norm{ \mat{B} }
    = \max{ \ell \in \cE_m }{ \sum_{k=1}^m \abs{B_{k\ell}} }.
\end{equation}

\paragraph{Cushing-Zhou and Li-Schneider Theorems.}

Suppose that ${\mat{A}\in\matRmmnn}$ is a projection matrix and consider the matrix population model ${\vecf{x}{t+1}=\mat{A}\,\vecf{x}{t}}$, with ${\vecf{x}{0}=\vec{x}_0}$, where ${t\in\Nd{}_0}$ and ${\vec{x}_0\in\Rmnn}$. Suppose further that ${\mat{A}=\mat{F}+\mat{S}}$, where ${\mat{F},\mat{S}\in\matRmmnn}$ and ${\norm{\mat{S}}<1}$. Consider the growth rate ${r:=\specrad{\mat{A}}}$ and the net reproductive number ${\cR_0:=\specrad{\mat{N}}}$, where ${\mat{N}:=\mat{F}\rb{\mat{I}-\mat{S}}^{-1}}$ is the next-generation matrix. If ${\cR_0>0}$ and $\mat{A}$ and $\mat{N}$ are irreducible, then either ${0<\cR_0<r<1}$, or ${r=1=\cR_0}$, or ${1<r<\cR_0}$. Moreover, ${\specrad{\cR_0^{-1}\,\mat{F}+\mat{S}}=1}$ and ${\specrad{\mat{F}+\cR_0\,\mat{S}}=\cR_0}$. This is the Cushing-Zhou Theorem, and it is useful because both $r$ and $\cR_0$ have intuitive meanings but $\cR_0$ is typically easier to compute. A weaker version, which is known as the Li-Schneider Theorem and appears as Theorem~3.3 of \cite{LiSchneider2002}, guarantees that either ${0 \leq \cR_0 \leq r < 1}$, or ${r=\cR_0=1}$, or ${1 \leq r \leq \cR_0}$ in the case that $\mat{A}$ and $\mat{N}$ are assumed to just be nonnegative.

\paragraph{Perron-Frobenius Theorem.}

The proof of the Cushing-Zhou Theorem and some results in this paper rely on the Perron-Frobenius Theorem. See, for example, \cite{Bellman1970,Gantmacher1959,HornJohnson2013,LiSchneider2002}. This powerful result applies to an irreducible matrix ${\mat{A}\in\matRmmnn}$ and asserts the following: There exists an eigenvalue $r$ of $\mat{A}$ (the \emph{Perron eigenvalue}) that is real, positive, and simple (that is, the eigenvalue is not a repeated root of the characteristic polynomial), and satisfies ${r=\specrad{\mat{A}}}$; the eigenspace is one-dimensional (only one linearly-independent eigenvector) and the associated left and right eigenvectors (the \emph{Perron eigenvectors}) can be taken to be positive; no eigenvalue other than $r$ has associated eigenvectors that are positive; and the spectral radius ${r=\specrad{\mat{A}}}$ satisfies ${\pderivslash{r}{A_{k\ell}}>0}$ for each ${k,\ell\in\cE_m}$. If, in addition, $\mat{A}$ is primitive, then the eigenvalue $r$ is dominant, that is any other eigenvalue $\lambda$ satisfies ${\abs{\lambda}<r}$.

\paragraph{Fundamental Theorem of Demography.}

Suppose the population $\vecf{x}{t}$ is modeled as ${\vecf{x}{t+1}=\mat{A}\,\vecf{x}{t}}$, with ${\vecf{x}{0}=\vec{x}_0}$, and ${r:=\specrad{\mat{A}}}$. If $\mat{A}$ is primitive and $\vec{u}$ and $\vec{v}$ are, respectively, associated positive left and right (Perron) eigenvectors that are normalized so that ${\vec{u}^*\,\vec{v}=1}$, then the population satisfies ${\vecf{x}{t} \sim r^t\,\vec{u}^*\,\vec{x}_0\,\vec{v}}$ and, provided ${\vec{u}^*\,\vec{x}_0\,\vec{v}\ne\vec{0}}$, the \emph{stable-age distribution} satisfies ${\fracslash{\vecf{x}{t}}{\norm{\vecf{x}{t}}}\sim\fracslash{\vec{v}}{\norm{\vec{v}}}}$ as ${t\to\infty}$. This is the Fundamental Theorem of Demography.

\subsection{Graph Reduction} \label{appA.02}

For further information on the technique of \emph{graph reduction}, which involves finding equivalent graphs to find eigenvalues and eigenvectors, see \cite{de-Camino-BeckLewis2007,MasonZimmermann1960}.

\paragraph{Directed and life-cycle graphs.}

For a matrix ${\mat{A}\in\matRmmnn}$, the associated \emph{directed graph} is constructed by forming $m$ nodes, labeled as elements in $\cE_m$, with a directed edge from node $\ell$ to node $k$ if ${A_{k\ell}>0}$. The directed graph is \emph{strongly connected} if for every pair of nodes $\rb{k,\ell}$ there exists a directed path connecting $k$ to $\ell$ and a directed path connecting $\ell$ to $k$. Note that the undirected graph is \emph{weakly connected} if an undirected path exists between any two nodes. Importantly for us, a directed graph correspond to a \emph{life-cycle graph} for a stage-structured population. Moreover, if the directed graph associated with $\mat{A}$ is strongly connected, then $\mat{A}$ is irreducible. Two sufficient conditions for primitivity are (i) $\mat{A}$ has a positive diagonal element and (ii) $\mat{A}$ is irreducible and there is at least one self-loop in the graph.

\paragraph{Linear signal-flow graphs.}

\begin{figure}[t]
    \begin{center}
        \includegraphics[scale=0.75]{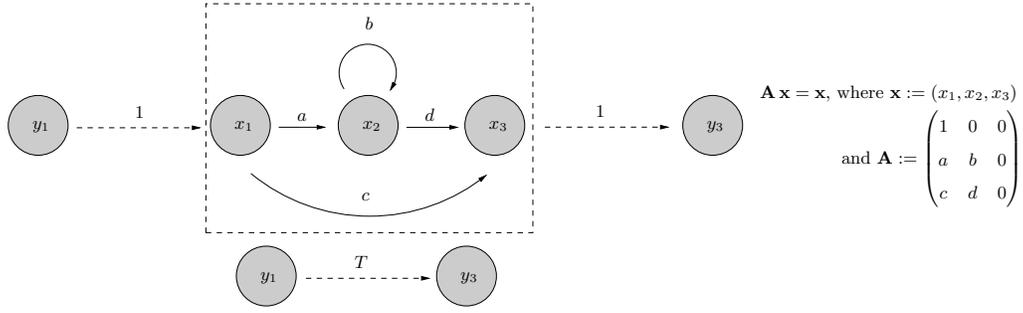}
        \caption{Graph representing the linear equations ${x_2=ax_1+bx_2}$ and ${x_3=cx_1+dx_2}$. Also shown are the phantom nodes $y_1$ (input) and $y_3$ (output) which satisfy ${y_3=Ty_1}$. Observe that ${T=\fracslash{y_3}{y_1}=\fracslash{x_3}{x_1}}$.} \label{figA.01}
    \end{center}
\end{figure}

A \emph{linear signal-flow graph}, which has origins in electronic-circuit theory, uses directed graphs representing a system of linear equations. See, for example, \cite{Caswell2001,de-Camino-BeckLewis2007,MasonZimmermann1960}. Consider a system of linear equations of the form ${\vec{x}=\mat{A}\,\vec{x}}$, where ${\vec{x}\in\Rm}$ and ${\mat{A}\in\matRmm}$. This can be written ${x_k=\sum_{\ell=1}^mA_{k\ell}\,x_\ell}$ for each ${k\in\cE_m}$. The graph is constructed as follows. There is a \emph{node} (or \emph{vertex}) for each variable $x_k$. Furthermore, if ${A_{k\ell} \ne 0}$, then a directed \emph{branch} (or \emph{edge}) is drawn from $x_\ell$ to $x_k$ with \emph{transmission} (or \emph{rate} or \emph{transmission rate}) $A_{k\ell}$. If ${A_{kk} \ne 0}$, then a \emph{self-loop} is drawn. Relations of the form ${x_k=x_k}$ (corresponding to row $k$ of $\mat{A}$ consisting of only zeros except for ${A_{kk}=1}$) are typically omitted from the graph. Note that the graph is in ``cause-and-effect'' form, with each dependent node expressed once as the effect of other cause nodes. An example is presented in Figure~\ref{figA.01}.

\paragraph{Graphs and eigenvalues.}

The \emph{$z$-transformed graph} (the name has engineering origins) of the matrix ${\mat{A}\in\matRmm}$ is the graph of ${\lambda^{-1}\,\mat{A}}$. This represents the system of equations ${\lambda^{-1}\,\mat{A}\,\vec{x}=\vec{x}}$, that is, ${\mat{A}\,\vec{x}=\lambda\,\vec{x}}$. So, $\lambda$ is a nonzero eigenvalue of $\mat{A}$ with associated eigenvector $\vec{x}$. Rules, informally known as \emph{Mason Equivalence Rules} (some of which appear in Figure~\ref{figA.02}), allow us to transform the graph and eliminate intermediate nodes, resulting in a simpler equation for the eigenvalues. With an appropriate sequence of elementary row operations (that is, Gaussian elimination) performed on ${\mat{A}-\lambda\,\mat{I}}$ resulting in ${\mat{E}\rb{\mat{A}-\lambda\,\mat{I}}}$ with ${\detb{\mat{E}} \ne 0}$, the polynomials $\detb{\mat{A}-\lambda\,\mat{I}}$ and $\detb{\mat{E}\rb{\mat{A}-\lambda\,\mat{I}}}$ have the same roots (eigenvalues).

\begin{figure}[t]
    \begin{center}
        \includegraphics[scale=0.75]{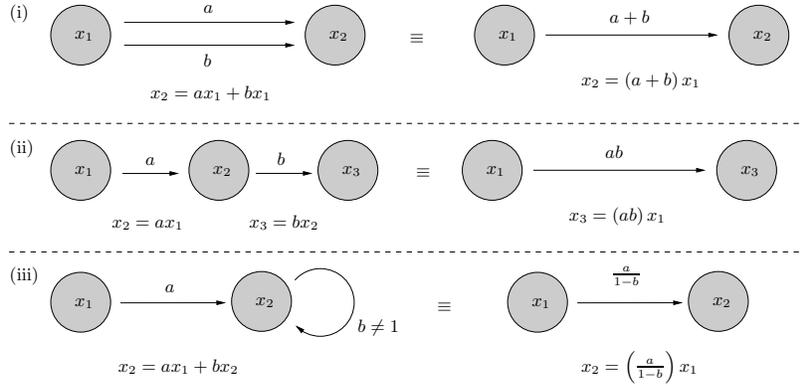}
        \caption{Three Mason Equivalence Rules.} \label{figA.02}
    \end{center}
\end{figure}

\paragraph{Graphs and the growth rate and net reproductive number.}

To address the application to population models, let $\mat{F}$, $\mat{S}$, $\mat{A}$, $\mat{N}$, $r$, and $\cR_0$ be as in the statement of the Cushing-Yicang Theorem as it appears in \S\ref{appA.01}. Assume ${r,\cR_0>0}$. Since ${\specrad{r^{-1}\,\mat{A}}=1}$ and ${\specrad{\cR_0^{-1}\,\mat{F}+\mat{S}}=1}$, performing graph reduction for the respective graphs of ${r^{-1}\,\mat{A}}$ and ${\cR_0^{-1}\,\mat{F}+\mat{S}}$ can yield $r$ and $\cR_0$. In both cases, if the characteristic polynomial has more than one root, we choose the largest root in absolute value.

\section{Proofs} \label{appB}


\begin{table}[t]
\begin{center}
\begin{tabular}{|c|p{4cm}|p{4.5cm}|p{4.5cm}|}
    \hline
    \textbf{Matrix} & \centering{\textbf{Definition}} & \textbf{Left-Multiplication} & \textbf{Right-Multiplication} \\ \hline\hline
    ${\mat{L}\in\matRdnn{mn}{mn}}$ & \centering{ For ${p,q\in\cE_{mn}}$, if ${p\in\cK}$ and ${q=p}$, then ${L_{pq}:=1}$. Otherwise, ${L_{pq}:=0}$. } & Sets to zero the rows corresponding to non-newborns. & Sets to zero the columns corresponding to non-newborns. \\ \hline
    ${\mat{G}\in\matRdnn{n}{mn}}$ & \centering{ For ${i\in\cE_n}$ and ${q\in\cE_{mn}}$, if ${q=1+\rb{i-1}m}$, then ${G_{iq}:=1}$. Otherwise, ${G_{iq}:=0}$. } & Removes the rows corresponding to non-newborns. & Inserts columns of zeros corresponding to non-newborns. \\ \hline
    ${\mat{H}\in\matRdnn{mn}{n}}$ & \centering{ ${\mat{H}:=\mat{G}^*}$ } & Inserts rows of zeros corresponding to non-newborns. & Removes the columns corresponding to non-newborns. \\ \hline
\end{tabular}
\caption{The auxiliary matrices $\mat{L}$, $\mat{G}$, and $\mat{H}$. For example, take ${m:=3}$, ${n:=2}$, ${\vec{x}:=\rb{x_1,x_2,x_3,x_4,x_5,x_6}}$, and ${\vec{y}:=\rb{y_1,y_2}}$. Then, ${\cK=\setb{1,4}}$. Moreover, $\mat{L}$ is a ${6 \times 6}$ matrix with zeros everywhere except for ${L_{11}=1}$ and ${L_{44}=1}$. Similarly,  $\mat{G}$ is a ${2 \times 6}$ matrix with zeros everywhere except for ${G_{11}=1}$ and ${G_{24}=1}$ and $\mat{H}$ is a ${6 \times 2}$ matrix with zeros everywhere except for ${H_{11}=1}$ and ${H_{42}=1}$. Then, ${\mat{L}\,\vec{x}=\rb{x_1,0,0,x_4,0,0}}$, ${\mat{G}\,\vec{x}=\rb{x_1,x_4}}$, and ${\mat{H}\,\vec{y}=\rb{y_1,0,0,y_2,0,0}}$.} \label{tabB.01}
\end{center}
\end{table}

\paragraph{Auxiliary matrices.} We employ the auxiliary matrices $\mat{L}$, $\mat{G}$, and $\mat{H}$ to handle many technical details of the proofs of results in the main text of this paper. The definitions and actions of these matrices are presented in Table~\ref{tabB.01}. The newborn submatrices, formed by expunging the rows and columns corresponding to non-newborns and defined in \eqref{eq02.15}, can be computed using $\mat{G}$ and $\mat{H}$. Specifically, ${\bmat{X}=\matPhif{\mat{X}}}$, where
\begin{equation} \label{eqB.01}
    \matPhif{\mat{X}} := \mat{G} \, \mat{X} \, \mat{H} \in \matRnn
    \mymbox{for}
    \mat{X} \in \matRd{mn}{mn}.
\end{equation}

\begin{proposition} \label{thmB.01}
Consider the sets $\cX$ and $\cY$, defined in \eqref{eq02.07}, auxiliary matrices $\mat{L}$, $\mat{G}$, and $\mat{H}$, defined in Table~\ref{tabB.01}, and the matrix function $\matPhi$, defined in \eqref{eqB.01}.
\begin{enumerate}[label={(\alph*)},ref={\thetheorem(\alph*)},leftmargin=*,widest=a]
    \item\label{thmB.01a}
        The auxiliary matrices satisfy ${\mat{G}\,\mat{H}=\mat{I}}$, ${\mat{H}\,\mat{G}=\mat{L}}$, ${\mat{G}\,\mat{L}=\mat{G}}$, and ${\mat{L}\,\mat{H}=\mat{H}}$.
    \item\label{thmB.01b}
        The matrices $\mat{N}$ and $\mat{W}$, given respectively in \eqref{eq02.09} and \eqref{eq02.14}, satisfy ${\mat{L}\,\mat{N}=\mat{N}}$ and ${\mat{L}\,\mat{W}=\mat{W}}$.
    \item\label{thmB.01c}
        For arbitrary matrix ${\mat{X}\in\matRd{mn}{mn}}$, define ${\bmat{X}:=\matPhif{\mat{X}}}$. Then, ${\mat{L}\,\mat{X}\,\mat{L}=\mat{H}\,\bmat{X}\,\mat{G}}$, ${\mat{L}\,\mat{X}\,\mat{H}=\mat{H}\,\bmat{X}}$, and ${\mat{G}\,\mat{X}\,\mat{L}=\bmat{X}\mat{G}}$.
    \item\label{thmB.01d}
        If ${\vec{x}\in\cX}$ then ${\mat{G}\,\vec{x}\in\cY}$. Similarly, if ${\vec{y}\in\cY}$ then ${\mat{H}\,\vec{y}\in\cX}$.
    \item\label{thmB.01e}
        Let ${\mat{B}\in\matRdnn{mn}{mn}}$, ${\vec{x}\in\cX}$, and ${\vec{y}\in\cY}$. Define ${\bmat{B}:=\matPhif{\mat{B}}\geq\mat{0}}$. If ${\mat{L}\,\mat{B}=\mat{B}}$, ${\vec{y}=\mat{G}\,\vec{x}}$, and ${\vec{x}=\mat{H}\,\vec{y}}$, then ${\norm{\mat{B}\,\vec{x}}=\norm{\bmat{B}\,\vec{y}}}$.
\end{enumerate}
\end{proposition}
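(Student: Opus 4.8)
The plan is to prove parts~(a)--(e) in order, extracting the combinatorial identities in~(a) directly from the entrywise descriptions in Table~\ref{tabB.01}, then using~(a) to get the formal identities of~(c), the structural facts of~(b) from the shape of $\mat{F}$ and $\mat{D}$, and finally assembling~(d) and~(e) from everything preceding. For part~(a), I would simply multiply matrices entrywise: writing $\mat{H}=\mat{G}^*$, the $(i,j)$ entry of $\mat{G}\mat{H}$ is $\sum_q G_{iq}G_{jq}$, which is $1$ if $i=j$ and $0$ otherwise (the unique nonzero column $q=1+(i-1)m$ of row $i$ of $\mat{G}$ matches only itself), so $\mat{G}\mat{H}=\mat{I}$; likewise $(\mat{H}\mat{G})_{pq}=\sum_i G_{ip}G_{iq}$ equals $1$ exactly when $p=q\in\cK$, which is the defining condition for $\mat{L}$. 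The identities $\mat{G}\mat{L}=\mat{G}$ and $\mat{L}\mat{H}=\mat{H}$ then hold because all nonzero entries of $\mat{G}$ already sit in the columns indexed by $\cK$ while $\mat{L}$ is the diagonal indicator of $\cK$ (and transposing gives the second).

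For part~(b), the key input is that $\mat{F}=\bigoplus_i\matm{F}{i}$ has each block supported on its first row, so $\mat{L}\mat{F}=\mat{F}$, together with the structural observation that $\mat{D}$ carries any vector supported on $\cK$ to another vector supported on $\cK$ --- because each block $\matm{D}{i,j}$ is diagonal, stage-$1$ mass from patch $j$ can only land in stage $1$ of some patch $i$ --- which says precisely $\mat{L}\mat{D}\mat{L}=\mat{D}\mat{L}$. From $\mat{W}=\mat{F}(\mat{I}-\mat{D}\mat{S})^{-1}$ I then get $\mat{L}\mat{W}=\mat{L}\mat{F}(\mat{I}-\mat{D}\mat{S})^{-1}=\mat{W}$, and from $\mat{N}=\mat{D}\mat{F}(\mat{I}-\mat{D}\mat{S})^{-1}=\mat{D}\mat{L}\mat{F}(\mat{I}-\mat{D}\mat{S})^{-1}$ I get $\mat{L}\mat{N}=\mat{L}\mat{D}\mat{L}\mat{F}(\mat{I}-\mat{D}\mat{S})^{-1}=\mat{D}\mat{L}\mat{F}(\mat{I}-\mat{D}\mat{S})^{-1}=\mat{N}$.

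Part~(c) is then purely formal manipulation with~(a): since $\bmat{X}=\mat{G}\mat{X}\mat{H}$ and $\mat{H}\mat{G}=\mat{L}$, one has $\mat{H}\bmat{X}\mat{G}=\mat{H}\mat{G}\,\mat{X}\,\mat{H}\mat{G}=\mat{L}\mat{X}\mat{L}$, and similarly $\mat{H}\bmat{X}=\mat{H}\mat{G}\,\mat{X}\,\mat{H}=\mat{L}\mat{X}\mat{H}$ and $\bmat{X}\mat{G}=\mat{G}\,\mat{X}\,\mat{H}\mat{G}=\mat{G}\mat{X}\mat{L}$. For part~(d), if $\vec{x}\in\cX$ then $\mat{G}\vec{x}\geq\vec{0}$ with $(\mat{G}\vec{x})_i=x_{1+(i-1)m}$, so $\norm{\mat{G}\vec{x}}=\sum_{p\in\cK}x_p=\norm{\vec{x}}=1$ because $\vec{x}$ vanishes off $\cK$; symmetrically $\mat{H}\vec{y}$ is nonnegative, supported on $\cK$, and norm-preserving, hence lies in $\cX$.

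Finally, part~(e) is where the pieces fit together. From $\mat{L}\mat{B}=\mat{B}$ the vector $\mat{B}\vec{x}$ is supported on $\cK$, and it is nonnegative since $\mat{B}\geq\mat{0}$ and $\vec{x}\geq\vec{0}$; hence restricting to the newborn coordinates discards only zero entries, so $\norm{\mat{B}\vec{x}}=\norm{\mat{G}\mat{B}\vec{x}}$. Using $\vec{x}=\mat{H}\vec{y}$ and $\bmat{B}=\mat{G}\mat{B}\mat{H}$ gives $\mat{G}\mat{B}\vec{x}=\mat{G}\mat{B}\mat{H}\vec{y}=\bmat{B}\vec{y}$, so $\norm{\mat{B}\vec{x}}=\norm{\bmat{B}\vec{y}}$. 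The only step that requires genuine thought rather than bookkeeping is the structural claim in part~(b) that the dispersion matrix preserves the newborn support (equivalently $\mat{L}\mat{D}\mat{L}=\mat{D}\mat{L}$); once that is in hand, everything else follows mechanically from the definitions and part~(a).
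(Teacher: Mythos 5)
Your proposal is correct and follows essentially the same route as the paper: part~(c) by the same formal manipulation with ${\mat{H}\,\mat{G}=\mat{L}}$, part~(e) by observing that ${\mat{B}\,\vec{x}}$ is nonnegative and supported on $\cK$ (you do this with the operator identity ${\mat{G}\,\mat{B}\,\mat{H}\,\vec{y}=\bmat{B}\,\vec{y}}$ where the paper chases the double sum ${\sum_{p,q}B_{pq}\,x_q}$, but the content is identical). In fact your part~(b) is slightly more careful than the paper's: the paper attributes ${N_{pq}=0}$ for ${p\not\in\cK}$ solely to the structure of $\mat{F}$, whereas for ${\mat{N}=\mat{D}\,\mat{W}}$ one also needs the fact you isolate, namely ${\mat{L}\,\mat{D}\,\mat{L}=\mat{D}\,\mat{L}}$ (the diagonal blocks $\matm{D}{i,j}$ cannot move stage-$1$ mass into a non-newborn row), so that detail is a genuine improvement rather than a deviation.
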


\begin{proof}{Proposition~\ref{thmB.01}}
\begin{enumerate}[label={(\alph*)},leftmargin=*,widest=a]
    \item
        The proof is straight-forward and omitted.
    \item
        Due to the structure of $\mat{F}$ (and each component matrix $\matm{F}{i}$), we see ${N_{pq}=0}$ and ${W_{pq}=0}$ for ${p\not\in\cK}$. Thus, the action of left-multiplication by $\mat{L}$ is to set to zero rows for the non-newborns that are already zero.
    \item
        For the first relation, multiply the equation ${\bmat{X}=\mat{G}\,\mat{X}\,\mat{H}}$ on the left by $\mat{H}$ and on the right by $\mat{G}$ and apply the relation ${\mat{H}\,\mat{G}=\mat{L}}$, given in part~\ref{thmB.01a}, twice. The second and third relations are similarly obtained.
    \item
        The statement is obvious.
    \item
        First, note ${\norm{\mat{B}\,\vec{x}}=\sum_{p,q=1}^{mn}B_{pq}\,x_q}$ and ${\norm{\bmat{B}\,\vec{y}}=\sum_{i,j=1}^n\bB_{ij}\,y_j}$. We are given ${B_{pq}=0}$ and ${x_q=0}$ for ${p\not\in\cK}$ and ${q\not\in\cK}$. Consequently, ${B_{pq}\,x_q}$ can only potentially be nonzero for ${p\in\cK}$ and ${q\in\cK}$. It follows from the definition of $\cK$ that ${\sum_{p,q=1}^{mn}B_{pq}\,x_q=\sum_{i,j=1}^n\bB_{ij}\,y_j}$, as desired.
\end{enumerate}
\end{proof}

\begin{proposition} \label{thmB.02}
Consider the auxiliary matrix $\mat{L}$, defined in Table~\ref{tabB.01}, and the matrix function $\matPhi$, defined in \eqref{eqB.01}. Let ${\mat{X},\mat{Y}\in\matRd{mn}{mn}}$ be arbitrary matrices.
\begin{enumerate}[label={(\alph*)},ref={\thetheorem(\alph*)},leftmargin=*,widest=a]
    \item\label{thmB.02a}
        If ${\mat{L}\,\mat{X}=\mat{X}}$, that is ${X_{pq}=0}$ for ${p\not\in\cK}$, then $\mat{X}$ and $\matPhif{\mat{X}}$ have the same nonzero eigenvalues and, in particular, ${\specrad{\mat{X}}=\specrad{\matPhif{\mat{X}}}}$.
    \item\label{thmB.02b}
        The matrix function satisfies ${\specrad{\matPhif{\mat{X}}}\leq\norm{\matPhif{\mat{X}}}\leq\norm{\mat{X}}}$.
    \item\label{thmB.02c}
        If ${\mat{X}\,\mat{L}=\mat{X}}$ or ${\mat{L}\,\mat{Y}=\mat{Y}}$, then ${\matPhif{\mat{X}\,\mat{Y}}=\matPhif{\mat{X}}\matPhif{\mat{Y}}}$.
\end{enumerate}
\end{proposition}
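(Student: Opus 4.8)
The plan is to push everything through the factorization $\matPhif{\mat{X}}=\mat{G}\,\mat{X}\,\mat{H}$ from \eqref{eqB.01} together with the identities $\mat{G}\,\mat{H}=\mat{I}$, $\mat{H}\,\mat{G}=\mat{L}$, $\mat{G}\,\mat{L}=\mat{G}$, and $\mat{L}\,\mat{H}=\mat{H}$ recorded in Proposition~\ref{thmB.01a}. In particular $\mat{L}=\mat{H}\,\mat{G}$, so the hypothesis $\mat{L}\,\mat{X}=\mat{X}$ in part~\ref{thmB.02a} reads $\mat{X}=\mat{H}\,\rb{\mat{G}\,\mat{X}}$, while $\matPhif{\mat{X}}=\rb{\mat{G}\,\mat{X}}\mat{H}$.

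For part~\ref{thmB.02a}, I would apply the standard fact that two matrix products $\mat{H}\,\mat{Z}$ and $\mat{Z}\,\mat{H}$, with $\mat{H}$ of size $mn\times n$ and $\mat{Z}$ of size $n\times mn$, have the same nonzero eigenvalues (multiplicities included) --- which follows from the polynomial identity $\lambda^{n}\,\detb{\lambda\,\mat{I}-\mat{H}\,\mat{Z}}=\lambda^{mn}\,\detb{\lambda\,\mat{I}-\mat{Z}\,\mat{H}}$, see e.g.\ \cite{HornJohnson2013} --- to $\mat{Z}:=\mat{G}\,\mat{X}$. This gives $\spectrum{\mat{X}}$ and $\spectrum{\matPhif{\mat{X}}}$ agreeing away from $0$ (the $\rb{m-1}n$ surplus eigenvalues of $\mat{X}$ all being $0$), whence $\specrad{\mat{X}}=\specrad{\matPhif{\mat{X}}}$: either the shared nonzero spectrum is empty and both radii vanish, or both equal the largest modulus occurring in it.

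Part~\ref{thmB.02b} is immediate: $\specrad{\matPhif{\mat{X}}}\le\norm{\matPhif{\mat{X}}}$ is the Frobenius bound \eqref{eqA.02}, and since by \eqref{eq02.15} the matrix $\matPhif{\mat{X}}$ is just the submatrix of $\mat{X}$ on the rows and columns indexed by $\cK$, each absolute column sum of $\matPhif{\mat{X}}$ is bounded by the corresponding absolute column sum of $\mat{X}$; maximizing over columns and invoking the column-sum formula \eqref{eqA.01} for the induced norm gives $\norm{\matPhif{\mat{X}}}\le\norm{\mat{X}}$. (Equivalently, $\norm{\mat{G}\,\mat{X}\,\mat{H}}\le\norm{\mat{G}}\,\norm{\mat{X}}\,\norm{\mat{H}}$ with $\norm{\mat{G}}=\norm{\mat{H}}=1$.)

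For part~\ref{thmB.02c}, I would compute $\matPhif{\mat{X}}\matPhif{\mat{Y}}=\mat{G}\,\mat{X}\,\mat{H}\,\mat{G}\,\mat{Y}\,\mat{H}=\mat{G}\,\mat{X}\,\mat{L}\,\mat{Y}\,\mat{H}$ using $\mat{H}\,\mat{G}=\mat{L}$, and compare with $\matPhif{\mat{X}\,\mat{Y}}=\mat{G}\,\mat{X}\,\mat{Y}\,\mat{H}$; the desired equality is thus equivalent to $\mat{G}\,\mat{X}\rb{\mat{I}-\mat{L}}\mat{Y}\,\mat{H}=\mat{0}$, which holds since $\mat{X}\rb{\mat{I}-\mat{L}}=\mat{0}$ when $\mat{X}\,\mat{L}=\mat{X}$ and $\rb{\mat{I}-\mat{L}}\mat{Y}=\mat{0}$ when $\mat{L}\,\mat{Y}=\mat{Y}$, so either hypothesis alone does the job. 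The only nonmechanical ingredient in the whole proof is the $\mat{H}\mat{Z}$--$\mat{Z}\mat{H}$ spectral identity in part~\ref{thmB.02a}; I expect that to be the main (indeed essentially the only) obstacle, and it is a textbook fact.
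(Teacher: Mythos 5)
Your proof is correct; parts~\ref{thmB.02b} and \ref{thmB.02c} coincide with the paper's argument essentially verbatim (Frobenius bound plus the submatrix/column-sum observation for (b); inserting ${\mat{L}=\mat{H}\,\mat{G}}$ between the two factors for (c)). The only genuine divergence is in part~\ref{thmB.02a}: the paper permutes the zero rows of $\mat{X}$ to the bottom and writes ${\mat{X}=\mat{M}^{-1}\twobytwomatrix{\matPhif{\mat{X}}}{\mat{B}}{\mat{0}}{\mat{0}}\mat{M}}$, so that the characteristic polynomial visibly factors as ${\detb{\lambda\,\mat{I}-\matPhif{\mat{X}}}\cdot\lambda^{\rb{m-1}n}}$, whereas you write ${\mat{X}=\mat{H}\rb{\mat{G}\,\mat{X}}}$ and ${\matPhif{\mat{X}}=\rb{\mat{G}\,\mat{X}}\mat{H}}$ and invoke the standard fact that $\mat{H}\,\mat{Z}$ and $\mat{Z}\,\mat{H}$ share their nonzero spectrum. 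Both are textbook-level and both give multiplicities; yours is slightly slicker in that it reuses the factorization machinery of Proposition~\ref{thmB.01a} and needs no permutation to be constructed, while the paper's block-triangular form is more concrete and is also cited later (see the remark appealing to \eqref{eqB.02} with ${\mat{X}=\mat{N}}$) to exhibit the reducibility of $\mat{N}$, which your route does not deliver as a byproduct. Your handling of the degenerate case where the shared nonzero spectrum is empty is a small point the paper leaves implicit.
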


\begin{proof}{Proposition~\ref{thmB.02}}
\begin{enumerate}[label={(\alph*)},leftmargin=*,widest=a]
    \item
        The rows of zeros, and the corresponding columns, can be permuted so that the resulting matrix has all zeros at the bottom. See \S0.9 and \S6.2 of \cite{HornJohnson2013} for further information. That is, we can write
        \begin{equation} \label{eqB.02}
            \mat{X} = \mat{M}^{-1} \twobytwomatrix{\matPhif{\mat{X}}}{\mat{B}}{\mat{0}}{\mat{0}} \mat{M},
        \end{equation}
        where ${\mat{M}\in\matRd{mn}{mn}}$ is an invertible permutation matrix and ${\mat{B}\in\matRdnn{\rb{m-1}n}{\rb{m-1}n}}$ is another matrix. The conclusion follows.
    \item
        The inequality ${\specrad{\matPhif{\mat{X}}}\leq\norm{\matPhif{\mat{X}}}}$ follows from \eqref{eqA.02}. Since $\matPhif{\mat{X}}$ is formed by expunging certain rows and columns of $\mat{X}$, the inequality ${\norm{\matPhif{\mat{X}}}\leq\norm{\mat{X}}}$ follows from the fact that the norm is computed as the maximum absolute column sum.
    \item
        From the assumptions, ${\mat{X}\,\mat{Y}=\mat{X}\,\mat{L}\,\mat{Y}}$ and hence ${\matPhif{\mat{X}\,\mat{Y}}=\mat{G}\,\mat{X}\,\mat{L}\,\mat{Y}\,\mat{H}}$. We know from Proposition~\ref{thmB.01a} that ${\mat{L}=\mat{H}\,\mat{G}}$, and hence we have ${\matPhif{\mat{X}\,\mat{Y}}=\rb{\mat{G}\,\mat{X}\,\mat{H}}\rb{\mat{G}\mat{Y}\,\mat{H}}}$. The conclusion follows.
\end{enumerate}
\end{proof}

\begin{proposition} \label{thmB.03}
Consider the matrices $\mat{L}$, $\mat{G}$, and $\mat{H}$, given in Table~\ref{tabB.01}, and the matrix function $\matPhi$, given in \eqref{eqB.01}. Suppose ${\mat{X}\in\matRd{mn}{mn}}$ is arbitrary and take ${\bmat{X}:=\matPhif{\mat{X}}}$.
\begin{enumerate}[label={(\alph*)},ref={\thetheorem(\alph*)},leftmargin=*,widest=a]
    \item\label{thmB.03a}
        If $\lambda$ is an eigenvalue of $\bmat{X}$ with associated right eigenvector $\vec{v}$, then $\lambda$ is an eigenvalue of ${\mat{L}\,\mat{X}}$ with associated eigenvector ${\vec{u}:=\mat{H}\,\vec{v}}$ (which satisfies ${\mat{L}\,\vec{u}=\vec{u}}$).
    \item\label{thmB.03b}
        If ${\mat{L}\,\mat{X}=\mat{X}}$ and $\lambda$ is an eigenvalue of $\mat{X}$ with associated left eigenvector $\vec{u}$, then $\lambda$ is an eigenvalue of $\bmat{X}$ with associated left eigenvector ${\vec{v}:=\mat{G}\,\vec{u}}$.
\end{enumerate}
\end{proposition}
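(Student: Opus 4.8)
The plan is to derive everything from the four identities in Proposition~\ref{thmB.01a}, namely ${\mat{G}\,\mat{H}=\mat{I}}$, ${\mat{H}\,\mat{G}=\mat{L}}$, ${\mat{G}\,\mat{L}=\mat{G}}$, and ${\mat{L}\,\mat{H}=\mat{H}}$, together with the definition ${\matPhif{\mat{X}}=\mat{G}\,\mat{X}\,\mat{H}}$ from \eqref{eqB.01} and the relation ${\mat{G}^*=\mat{H}}$ recorded in Table~\ref{tabB.01}. With these in hand, each of the two implications becomes a one-line computation followed by a short check that the proposed eigenvector is nonzero; no deeper machinery (e.g.\ Perron-Frobenius) is needed here.

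For part~\ref{thmB.03a}, I would begin from ${\bmat{X}\,\vec{v}=\lambda\,\vec{v}}$, i.e.\ ${\mat{G}\,\mat{X}\,\mat{H}\,\vec{v}=\lambda\,\vec{v}}$, and left-multiply by $\mat{H}$. Using ${\mat{H}\,\mat{G}=\mat{L}}$ this yields ${\mat{L}\,\mat{X}\rb{\mat{H}\,\vec{v}}=\mat{H}\,\bmat{X}\,\vec{v}=\lambda\rb{\mat{H}\,\vec{v}}}$, so ${\vec{u}:=\mat{H}\,\vec{v}}$ is a right eigenvector of $\mat{L}\,\mat{X}$ for $\lambda$, provided ${\vec{u}\ne\vec{0}}$; but ${\mat{G}\,\mat{H}\,\vec{v}=\vec{v}\ne\vec{0}}$ forces ${\mat{H}\,\vec{v}\ne\vec{0}}$. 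Finally ${\mat{L}\,\vec{u}=\mat{L}\,\mat{H}\,\vec{v}=\mat{H}\,\vec{v}=\vec{u}}$ by ${\mat{L}\,\mat{H}=\mat{H}}$. This direction uses neither a hypothesis on $\mat{X}$ nor any restriction on $\lambda$.

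For part~\ref{thmB.03b}, I would start from ${\vec{u}^*\,\mat{X}=\lambda\,\vec{u}^*}$ and compute, using ${\mat{G}^*=\mat{H}}$, ${\mat{H}\,\mat{G}=\mat{L}}$, and the hypothesis ${\mat{L}\,\mat{X}=\mat{X}}$,
${\rb{\mat{G}\,\vec{u}}^*\bmat{X}
=\vec{u}^*\,\mat{G}^*\,\mat{G}\,\mat{X}\,\mat{H}
=\vec{u}^*\,\mat{H}\,\mat{G}\,\mat{X}\,\mat{H}
=\vec{u}^*\,\mat{L}\,\mat{X}\,\mat{H}
=\vec{u}^*\,\mat{X}\,\mat{H}
=\lambda\,\vec{u}^*\,\mat{H}
=\lambda\rb{\mat{G}\,\vec{u}}^*}$.
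Hence ${\vec{v}:=\mat{G}\,\vec{u}}$ is a left eigenvector of $\bmat{X}$ for $\lambda$ as soon as ${\vec{v}\ne\vec{0}}$.

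The one step that is not pure bookkeeping is the nonvanishing of ${\vec{v}=\mat{G}\,\vec{u}}$ in part~\ref{thmB.03b}, and this is where I expect the only real work. The plan is to argue by contradiction: if ${\mat{G}\,\vec{u}=\vec{0}}$, then ${\vec{u}^*\,\mat{H}=\rb{\mat{G}\,\vec{u}}^*=\vec{0}^*}$, hence ${\vec{u}^*\,\mat{L}=\vec{u}^*\,\mat{H}\,\mat{G}=\vec{0}^*}$, and therefore ${\lambda\,\vec{u}^*=\vec{u}^*\,\mat{X}=\vec{u}^*\,\mat{L}\,\mat{X}=\vec{0}^*}$; since ${\vec{u}\ne\vec{0}}$ this gives ${\lambda=0}$. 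So ${\vec{v}\ne\vec{0}}$ whenever ${\lambda\ne0}$, which covers all cases of interest (in the applications, e.g.\ Theorem~\ref{thm02.13}, the relevant eigenvalue is the positive Perron value ${\cR_0>0}$); with the usual convention that eigenvectors are nonzero, the statement as written is then fully justified. I do not anticipate any other obstacle.
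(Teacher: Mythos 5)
Your proposal is correct and follows essentially the same route as the paper's own proof: part~(a) by left-multiplying ${\bmat{X}\,\vec{v}=\lambda\,\vec{v}}$ by $\mat{H}$ and using ${\mat{H}\,\mat{G}=\mat{L}}$, and part~(b) by right-multiplying by $\mat{H}$ and using ${\mat{H}^*=\mat{G}}$. The one place you go beyond the paper is the check that the proposed eigenvectors are nonzero: the paper omits this entirely, and your observation that ${\mat{G}\,\vec{u}}$ can vanish only when ${\lambda=0}$ is a genuine (if minor) repair --- indeed, for ${\lambda=0}$ part~(b) as literally stated can fail, since any $\vec{u}$ supported off $\cK$ is a left null vector of ${\mat{X}=\mat{L}\,\mat{X}}$ with ${\mat{G}\,\vec{u}=\vec{0}}$ even when $\bmat{X}$ is invertible; as you note, this does not affect the applications, where the relevant eigenvalue is the positive Perron root.
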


\begin{proof}{Proposition~\ref{thmB.03}}
\begin{enumerate}[label={(\alph*)},leftmargin=*,widest=a]
    \item
        We are given ${\bmat{X}\,\vec{v}=\lambda\,\vec{v}}$. Since ${\bmat{X}=\mat{G}\,\mat{X}\,\mat{H}}$, left-multiplication by $\mat{H}$ yields ${\rb{\mat{H}\,\mat{G}}\mat{X}\rb{\mat{H}\,\vec{v}}=\lambda\rb{\mat{H}\,\vec{v}}}$. Since ${\mat{H}\,\mat{G}=\mat{L}}$ and ${\vec{u}=\mat{H}\,\vec{v}}$, we are finished.
    \item
        We are given that ${\vec{u}^*\rb{\mat{L}\,\mat{X}}=\lambda\,\vec{u}^*}$. Multiply on the right by $\mat{H}$ to obtain ${\vec{u}^*\rb{\mat{L}\,\mat{X}\,\mat{H}}=\lambda\rb{\vec{u}^*\,\mat{H}}}$. Substituting ${\mat{L}=\mat{H}\,\mat{G}}$ reveals ${\rb{\vec{u}^*\,\mat{H}}\bmat{X}=\lambda\rb{\vec{u}^*\,\mat{H}}}$. Since ${\mat{G}\,\vec{u}=\vec{v}}$ and ${\mat{H}^*=\mat{G}}$, we know ${\vec{v}^*=\vec{u}^*\,\mat{H}}$ and we are left with ${\vec{v}^*\,\bmat{X}=\lambda\,\vec{v}^*}$, as desired.
\end{enumerate}
\end{proof}


\paragraph{Main proofs.} With the preceding preparations completed, we now present the proofs for results in the main text that have not been omitted for brevity.

\begin{proof}{Proposition~\ref{thm02.04}}
\begin{enumerate}[label={(\alph*)},leftmargin=*,widest=a]
    \item
        It follows from \eqref{eqA.01}, \eqref{eq02.01}, and \eqref{eq02.03}.
    \item
        The proof is standard. The first statement, ${\sum_{p=1}^{mn}D_{pq}=1}$ for every ${q\in\cE_{mn}}$, follows from \eqref{eq02.02}, \eqref{eq02.04}, and \eqref{eq02.05}. To show that ${\norm{\mat{D}\,\vec{x}}=\norm{\vec{x}}}$ and ${\norm{\mat{D}\,\mat{X}}=\norm{\mat{X}}}$, employ routine algebra and use \eqref{eqA.01} and the column stochasticity of $\mat{D}$. To show that ${\specrad{\mat{D}}=\norm{\mat{D}}=1}$, first observe that from \eqref{eqA.01}, \eqref{eq02.02}, \eqref{eq02.04}, and \eqref{eq02.05}, we know ${\norm{\mat{D}}=1}$. Appealing to \eqref{eqA.02}, ${\specrad{\mat{D}} \leq 1}$. To show ${\specrad{\mat{D}}=1}$, show that the vector ${\vec{u}:=\rb{1,\ldots,1}\in\Rd{mn}}$ is a left eigenvector associated with the eigenvalue $1$. It follows ${\specrad{\mat{D}}=1}$.
\end{enumerate}
\end{proof}

\begin{proof}{Proposition~\ref{thm02.08}}
The proof is similar to that of Proposition~\ref{thm02.04}.
\end{proof}

\begin{proof}{Proposition~\ref{thm02.09}}
It follows from \eqref{eq02.13} and Proposition~\ref{thmB.01} that ${\hcR_0=\maxst{\norm{\bmat{W}\,\vec{y}}}{\vec{y}\in\cY}}$, where $\cY$ is the set given in \eqref{eq02.07}. If the $j^\text{th}$ column of $\bmat{W}$ gives the maximum absolute column sum, then choosing $\vec{y}$ to be the $j^\text{th}$ standard unit basis vector reveals the desired conclusion.
\end{proof}

\begin{proof}{Theorem~\ref{thm02.11}}
We know that ${\mat{N}=\mat{D}\,\mat{W}}$. Using Propositions~\ref{thmB.01b} and \ref{thmB.02c}, we can conclude ${\bmat{N}=\bmat{D}\cdot\bmat{W}}$. The facts ${\specrad{\bmat{N}}=\specrad{\mat{N}}=\cR_0}$ and ${\specrad{\bmat{W}}=\specrad{\mat{W}}}$ follow from Propositions~\ref{thmB.01b} and \ref{thmB.02a}. The facts  ${\cR_0\leq\norm{\bmat{N}}\leq\norm{\mat{N}}}$ and ${\hcR_0=\norm{\bmat{W}}\leq\norm{\mat{W}}}$ follow from Proposition~\ref{thmB.02b}.
\end{proof}

\begin{proof}{Corollary~\ref{thm02.12}}
It follows from \eqref{eq02.02}, Proposition~\ref{thm02.08}, and Theorem~\ref{thm02.11}.
\end{proof}

\begin{proof}{Theorem~\ref{thm02.13}}
\begin{enumerate}[label={(\alph*)},leftmargin=*,widest=a]
    \item
        By virtue of Theorem~\ref{thm02.11}, ${\cR_0\leq\norm{\bmat{N}}}$ and ${\hcR_0=\norm{\bmat{W}}}$. Using Proposition~\ref{thm02.08}, ${\norm{\bmat{N}}=\norm{\bmat{D}\cdot\bmat{W}}=\norm{\bmat{W}}}$. Thus, ${\cR_0\leq\hcR_0}$.
    \item
        Define the function ${\vec{g}\in\Cr{}{\cY,\Rdnn{}}}$ by ${\vecf{g}{\vec{y}}:=\norm{\bmat{N}\,\vec{y}}=\sum_{j=1}^n\rb{\sum_{i=1}^n\bN_{ij}}y_j}$. Note that $\cY$ is nonempty, compact, and connected. Let $j_\alpha$ and $j_\beta$ be such that ${\sum_{i=1}^{n}\bN_{ij_\alpha}=\alpha}$ and ${\sum_{i=1}^{n}\bN_{ij_\beta}=\beta}$. Define the vectors ${\vec{a},\vec{b}\in\cY}$ component-wise using the Kronecker-delta by ${a_i:=\delta_{ij_\alpha}}$ and ${b_i:=\delta_{ij_\beta}}$ for each ${i\in\cE_n}$. Then, ${\alpha=\norm{\bmat{N}\,\vec{a}}=\vecf{g}{\vec{a}}}$ and ${\beta=\norm{\bmat{N}\,\vec{b}}=\vecf{g}{\vec{b}}}$ with ${\vecf{g}{\vec{a}}\leq\vecf{g}{\vec{y}}\leq\vecf{g}{\vec{b}}}$ for each ${\vec{y}\in\cY}$.

        Since ${\cR_0=\specrad{\bmat{N}}}$, as we know from Theorem~\ref{thm02.11}, it follows from \eqref{eqA.02} that ${\alpha\leq\cR_0\leq\beta}$. With the assistance of the Intermediate Value Theorem, we know there exists ${\veczeta\in\cY}$ such that ${\cR_0=\vecf{g}{\veczeta}=\norm{\bmat{N}\,\veczeta}}$. Using Propositions~\ref{thmB.01b}, \ref{thmB.01d}, and \ref{thmB.01e}, ${\cR_0=\norm{\mat{N}\,\vecxi}}$, where ${\vecxi:=\mat{H}\,\veczeta\in\cX}$.
    \item
        The Perron-Frobenius Theorem guarantees the existence of ${\veczeta\in\Rnp}$ with ${\norm{\veczeta}=1}$ and ${\bmat{N}\,\veczeta=\cR_0\,\veczeta}$. Observe that ${\veczeta\in\cY}$ and so, by virtue of Proposition~\ref{thmB.01d}, ${\vecxi\in\cX}$, where ${\vecxi:=\mat{H}\,\veczeta}$. Using Proposition~\ref{thmB.01b} and Proposition~\ref{thmB.03a}, ${\mat{N}\,\vecxi=\cR_0\,\vecxi}$. Taking the norm of both sides of this equation, ${\norm{\mat{N}\,\vecxi}=\cR_0\norm{\vecxi}=\cR_0}$, as requested.
    \item
        The proof is straight-forward and omitted.
\end{enumerate}
\end{proof}

\begin{proof}{Proposition~\ref{thm03.01}}
\begin{enumerate}[label={(\alph*)},leftmargin=*,widest=a]
    \item
        Since ${\mat{D}\,\mat{S}=\mat{S}}$ and ${\mat{W}=\mat{F}\rb{\mat{I}-\mat{D}\,\mat{S}}^{-1}}$, it is easy to see from the block-diagonal forms of $\mat{F}$ and $\mat{S}$ that ${\mat{W}=\bigoplus_{i=1}^n\matm{F}{i}\sqb{\mat{I}-\matm{S}{i}}^{-1}=\bigoplus_{i=1}^n\matm{N}{i}}$. Upon expunging the rows and columns for non-newborns and recalling that ${\scim{N}{11}{i}=\scim{\cR}{0}{i}}$, we observe ${\bmat{W}=\bigoplus_{i=1}^n\scim{\cR}{0}{i}}$.
    \item
        Since ${\hcR_0=\norm{\bmat{W}}}$, it follows from \eqref{eqA.01} that ${\hcR_0=\max{i\in\cE_n}{\scim{\cR}{0}{i}}}$. Appealing to Proposition~\ref{thm02.08} (specifically ${\bD_{ij}=\scim{d}{1}{i,j}}$) and Theorem~\ref{thm02.11} (specifically ${\bmat{N}=\bmat{D}\cdot\bmat{W}}$), ${\bN_{ij}=\scim{d}{1}{i,j}\,\scim{\cR}{0}{j}}$ for each ${i,j\in\cE_n}$.
    \item
        Now, ${\alpha\,\bmat{D}\leq\bmat{N}\leq\beta\,\bmat{D}}$ and ${\specrad{\bmat{D}}=1}$, the latter of which we know from Proposition~\ref{thm02.08}. Since ${\cR_0=\specrad{\bmat{N}}}$, the Perron-Frobenius Theorem (specifically, increasing an element of a positive matrix increases the spectral radius) establishes the conclusions.
\end{enumerate}
\end{proof}

\begin{proof}{Proposition~\ref{thm03.06}}
First, note that $0$ cannot be a root of $g$ since ${\scf{g}{0}=-b<0}$. To prove the first two statements, sketch the graphs of ${y=u^2}$ and ${y=a+\fracslash{b}{u}}$ to illustrate that there is a unique positive root $u^*$ which satisfies ${\pderivslash{u^*}{a}>0}$ and ${\pderivslash{u^*}{b}>0}$ and is greater in magnitude than any other root (which must be negative). Alternatively, use Descartes' Rule of Signs to show that $g$ has a single positive root $u^*$ and, after employing routine Calculus to show that ${3\rb{u^*}^2-a>0}$, use implicit differentiation to show ${\pderivslash{u^*}{a}=\fracslash{u^*}{\sqb{3\rb{u^*}^2-a}}>0}$ and ${\pderivslash{u^*}{b}=\fracslash{1}{\sqb{3\rb{u^*}^2-a}}>0}$.

To prove the third statement, observe that ${\scf{g}{1}=1-\rb{a+b}}$ and note \dsm{\lim_{u\to\infty}\scf{g}{u}=\infty}. So, if ${a+b=1}$ then ${\scf{g}{1}=0}$ and ${u^*=1}$. Likewise, if ${a+b<1}$ then ${\scf{g}{1}>0}$ and so ${u^*<1}$ and if ${a+b>1}$ then ${\scf{g}{1}<0}$ and so ${u^*>1}$. That is, $u^*$ and ${a+b}$ are on the same side of 1.
\end{proof}

\section*{Acknowledgements}

\addcontentsline{toc}{section}{Acknowledgements}

We would like to thank Farah Abu Sharkh, Dr.~Zou's summer student in the summer of 2011, for her help. We would also like to thank, for their generous financial support, the Natural Sciences and Engineering Research Council (NSERC) of Canada (X.Z.), the Mathematics of Information Technology and Complex Systems (MITACS) Elevate Strategic Fellowship Program (M.S.C.), MITACS Networks of Centres of Excellence (NCE) (X.Z.), and the Ontario Funding for Canada-Ontario Agreement (COA-7-22) Respecting the Great Lakes Basin Ecosystem (M.S.C., Y.Z.).

\end{document}